\documentclass[a4paper,english]{amsart}
\usepackage[T1]{fontenc}
\usepackage[latin9]{inputenc}
\usepackage{color}
\usepackage{amsthm}
\usepackage{amstext}
\usepackage{graphicx}
\usepackage{amssymb}
\usepackage{esint}

\makeatletter

\pdfpageheight\paperheight
\pdfpagewidth\paperwidth

\numberwithin{equation}{section}
\numberwithin{figure}{section}
\theoremstyle{plain}
\newtheorem{thm}{Theorem}[section]
  \theoremstyle{plain}
  \newtheorem{prop}[thm]{Proposition}
 \theoremstyle{definition}
  \newtheorem{example}[thm]{Example}
  \theoremstyle{definition}
  \newtheorem{defn}[thm]{Definition}
  \theoremstyle{remark}
  \newtheorem{rem}[thm]{Remark}
  \theoremstyle{plain}
  \newtheorem{cor}[thm]{Corollary}
  \theoremstyle{plain}
  \newtheorem{lem}[thm]{Lemma}
  \theoremstyle{definition}
  \newtheorem*{example*}{Example}


\usepackage{amssymb,amsmath}
 
\usepackage{textcomp}
\usepackage{amsthm}
\usepackage{latexsym}

\usepackage{float}
\usepackage[latin9]{inputenc}

 \usepackage{color}

\usepackage{fancybox}
\usepackage{array}
\usepackage{fancyhdr}
\usepackage{pgf}
\usepackage{bbm}

\DeclareMathOperator{\supp}{supp}

\DeclareMathOperator{\card}{card}

\DeclareMathOperator{\spn}{span}
\DeclareMathOperator{\cl}{cl}

\DeclareMathOperator{\id}{id}

\newcommand{\Z}{\mathbb{Z}}

\newcommand{\U}{\mathcal{U}}
\newcommand{\T}{\mathcal{T}}
\newcommand{\R}{\mathbb {R}}

\newcommand{\C}{\mathbb {C}}

\newcommand{\N}{\mathbb {N}}

\newcommand{\1}{\mathbbm{1}}

\renewcommand{\epsilon}{\varepsilon}
\renewcommand{\emptyset}{\varnothing}

\@ifundefined{showcaptionsetup}{}{%
 \PassOptionsToPackage{caption=false}{subfig}}
\usepackage{subfig}
\makeatother

\usepackage{babel}

\begin{document}
\selectlanguage{english}

\title{Multiresolution analysis for Markov Interval Maps}

\author{Jana Bohnstengel and Marc Kesseböhmer }
\begin{abstract}
We set up a multiresolution analysis on fractal sets derived from
limit sets of Markov Interval Maps. For this we consider the $\Z$-convolution
of a non-atomic measure supported on the limit set of such systems
and give a thorough investigation of the space of square integrable
functions with respect to this measure. We define an abstract multiresolution
analysis, prove the existence of mother wavelets, and then apply these
abstract results to Markov Interval Maps. Even though, in our setting
the corresponding scaling operators are in general not unitary we
are able to give a complete description of the multiresolution analysis
in terms of multiwavelets. 
\end{abstract}

\date{July 1, 2011}

\address{Fachbereich 3 - Mathematik und Informatik, Universität Bremen, Bibliothekstrasse
1, 28359 Bremen, Germany}

\email{bohni@math.uni-bremen.de, mhk@math.uni-bremen.de}

\keywords{multiwavelets, multiresolution analysis, Markov interval maps}

\maketitle
\tableofcontents{}

\section{Introduction and main results}

The main aim of this paper is to construct a wavelet basis on limit
sets of Markov Interval Maps (MIM) in the unit interval translated
by $\Z$. In this way we extend the results in \cite{DJ03,BoKe10},
where wavelet bases with respect to singular measures were provided.
For this we first prove that a MIM gives rise to a multiresolution
analysis (MRA) and study the particular case where the underlying
measure is Markovian. This MRA is then reformulated in an abstract
way allowing us to prove the existence of a wavelet basis in this
abstract setting. 

In the case of a fractal given by an iterated function system (IFS)
on $[0,1]$ there are several approaches to construct a wavelet basis
on the $L^{2}$-space with respect to a suitable singular measure
which is supported on a so-called enlarged fractal. The enlarged fractal
is derived from the original fractal by first mapping scaled copies
of it to each gap interval and then by taking the union of translats
by $\Z$ defining a dense set in $\R$. In \cite{DJ03} the authors
construct a wavelets basis for fractals on self-similar Cantor sets,
i.e. sets that are given by affine IFS with the same scaling factor
$1/N$, $N\geq2$, for all $p\leq N$ branches. They consider the
$L^{2}$-space with respect to $\mu$, the $\delta$-dimensional Hausdorff
measure restricted to the enlarged fractal, where $\delta$ denotes
the dimension of the Cantor set. In this situation the analysis depends
on the two unitary operators $U$ and $T$, where $U$ denotes the
\emph{scaling operator} given by $Uf:=\sqrt{p}f\left(N\cdot\right)$
and $T$ denotes the \emph{translation operator} given by $Tf:=f(\cdot-1)$
for $f\in L^{2}(\mu)$. Furthermore, a natural choice for a father
wavelet $\varphi$ is the characteristic function on the original
fractal. The authors show that for a family of closed subspaces $\left(V_{j}\right)_{j\in\Z}$
of $L^{2}(\mu)$ the following six conditions are satisfied. 
\begin{itemize}
\item $\dots\subset V_{-2}\subset V_{-1}\subset V_{0}\subset V_{1}\subset V_{2}\subset\cdots$, 
\item $\text{cl}\bigcup_{j\in\mathbb{Z}}V_{j}=L^{2}(\mu)$, 
\item $\bigcap_{j\in\mathbb{Z}}V_{j}=\{0\}$, 
\item $V_{j+1}=UV_{j}$, $j\in\mathbb{Z}$,
\item $\left\{ T^{n}\varphi:\; n\in\mathbb{Z}\right\} $ is an orthonormal
basis in $V_{0}$,
\item $U^{-1}TU=T^{N}$.
\end{itemize}
These observations allow the authors to construct a wavelets basis
for $L^{2}(\mu)$ explicitly in terms of certain filter functions. 

In \cite{BoKe10} we generalize this approach by allowing conformal
IFS satisfying the open set condition on $[0,1]$. We choose the measure
of maximal entropy supported on the fractal and this measure is extended
to a measure $\mu$ supported on the enlarged fractal in $\R$. Then
similarly as in \cite{DJ03} we construct the wavelet basis via MRA
in terms of the unitary scaling operator $U$ and the unitary translation
operator $T$. Again via filter functions the mother wavelets $\psi_{i}$,
$i\in\left\{ 1,\dots,N-1\right\} $ are defined such that $\left\{ U^{n}T^{k}\psi_{i}:n,k\in\Z,i\in\{1,\dots,N-1\}\right\} $
provides an orthonormal basis of $L^{2}(\mu)$. 

Here, our aim is to extend the construction of wavelet bases with
respect to fractal measures to the construction of wavelet bases on
the by $\Z$ translated limit set of a Markov Interval Map (MIM).
A Markov Interval Map consists of a family $\left(B_{i}\right)_{i=0}^{N-1}$
of closed subintervals in $[0,1]$ with disjoint interior, and a function
$F:\bigcup_{i\in\underline{N}}B_{i}\rightarrow[0,1]$, such that $F|{}_{B_{i}}$
is expanding and $C^{1}$, $i\in\underline{N}:=\left\{ 0,\ldots,N-1\right\} $
and such that $F\left(B_{i}\right)\cap B_{j}\not=\emptyset$ implies
$B_{j}\subset F\left(B_{i}\right)$. Its (fractal) \emph{limit set}
is given by $X:=\bigcap_{n=0}^{\infty}F^{-n}I$, where $I:=\bigcup_{i\in\underline{N}}B_{i}$.
By considering its inverse branches $\tau_{i}:=\left(F|_{B_{i}}\right)^{-1}$,
$i\in\underline{N}$, we obtain a Graph Directed Markov System (see
\cite{MU03}) with incidence matrix $A=\left(A_{ij}\right)_{i,j\in\underline{N}}$,
where $A_{ij}=1$ if $F\left(B_{i}\right)\supset B_{j}$ and $0$
otherwise. For the precise definition see Definition \ref{def:MFS}
and for an explicit example of an MIM see Example \ref{exa:-Transformation}
where we consider the $\beta$-transformation. The limit set $X$
is --~up to a countable set where it is finite-to-one~-- homeomorphic
to the topological Markov chain $\Sigma_{A}:=\{\omega=(\omega_{0},\omega_{1},\dots)\in\underline{N}^{\N}:\, A_{\omega_{i}\omega_{i+1}}=1\,\forall i\geq0\}$.
For the definition of the canonical coding map $\pi$ from $\Sigma_{A}$
to $X$ see (\ref{eq:codingmap}).

Given a Markov measure $\widetilde{\nu}$ on the shift space $\Sigma_{A}$
with a probability vector $\left(p_{i}\right)_{i\in\underline{N}}$
and stochastic matrix $\left(\pi_{ij}\right)_{i,j\in\underline{N}}$,
we consider the probability measure $\nu:=\widetilde{\nu}\circ\pi^{-1}$,
to which we also refer as $\nu$ a Markov measure. The $\mathbb{Z}$-convolution
(by translations) of $\nu$ is given by \[
\nu_{\Z}:=\sum_{k\in\Z}\nu(\cdot-k).\]
Similar to the construction in \cite{BoKe10} we introduce the scaling
operator \begin{equation}
Uf(x):=\sum_{k\in\Z}\sum_{j\in\underline{N}}\sum_{i\in\underline{N}}\sqrt{\frac{p_{i}}{p_{j}\pi_{ji}}}\cdot\mathbbm{1}_{[ji]}(x-k)\cdot f(\tau_{j}^{-1}(x-k)+j+Nk)\label{eq:def U}\end{equation}
 and the translation operator \begin{equation}
Tf(x):=f(x-1)\label{eq:Def T}\end{equation}
for $f\in L^{2}(\nu_{\Z})$ and $x\in\R$, where $[ji]\subset\R$,
$i,j\in\underline{N}$, denotes a cylinder set (see Section \ref{sec:Setting-(IFS-with}).
It is important to note that in contrast to the construction of the
scaling operator for IFS the operator $U$ is in general \emph{not
unitary.} Nevertheless, we have the following properties.
\begin{prop}
\label{pro:eigenschaften UT-1}Let $\left(\varphi_{i}\right)_{i\in\underline{N}}$
denote a family of father wavelets given by $\varphi_{i}:=\sqrt{\nu([i])}^{-1}\mathbbm{1}_{[i]}$,
$i\in\underline{N}$. The translation operator $T$ and the scaling
operator $U$ satisfy the following properties.
\begin{enumerate}
\item \label{enu:Prop U (1)}$TU=UT^{N}$,
\item \label{enu:,Prop U (2)}$\varphi_{i}=U\sum_{j\in\underline{N}}\sqrt{\pi_{ij}}T^{i}\varphi_{j}$,
$i\in\underline{N}$,
\item \label{enu:,Prop U (3)}$\langle T^{k}\varphi_{i}|T^{l}\varphi_{j}\rangle=\delta_{(k,i),(l,j)}$,
$k,l\in\Z$, $i,j\in\underline{N}$, 
\item \label{enu:,Prop U (4)}$UU^{*}=\id$,
\item \label{enu:-Prop U (5)}$U^{*}U=\id$ if and only if $A_{ij}=1$ for
all $i,j\in\underline{N}.$ 
\end{enumerate}
\end{prop}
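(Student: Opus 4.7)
The plan is to verify each identity by direct expansion from the definitions of $U$, $T$, and $\varphi_{i}$, using as the one non-routine ingredient the change-of-variables formula for the Markov measure under an inverse branch: for $A_{ji}=1$ and any non-negative measurable $h$,
\begin{equation*}
\int_{[i]}h\circ\tau_{j}\,\d\nu=\frac{p_{i}}{p_{j}\pi_{ji}}\int_{[ji]}h\,\d\nu,
\end{equation*}
which follows from $\nu([jia_{1}\cdots a_{n}])=(p_{j}\pi_{ji}/p_{i})\,\nu([ia_{1}\cdots a_{n}])$ by a monotone class argument. The weights in \eqref{eq:def U} are chosen precisely so that this Jacobian cancels, making $U$ a (partial) isometry.

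Items (1)--(3) reduce to formal manipulation. For (1), I would expand $TUf(x)=Uf(x-1)$ via \eqref{eq:def U} and shift the summation index $k\mapsto k+1$; the result equals $UT^{N}f(x)$. For (2), I plug the definition of $T^{i}\varphi_{j}$ into the right-hand side: the indicator $\mathbbm{1}_{[j'i']}(x-k)$ together with $\supp T^{i}\varphi_{j}\subset[i,i+1]$ forces $k=0$ and $j'=i$, the support of $\varphi_{j}$ then forces $j=i'$, and the sum telescopes to $\nu([i])^{-1/2}\mathbbm{1}_{[i]}=\varphi_{i}$ after using $\nu([i])=p_{i}$ and $[i]=\bigcup_{i':A_{ii'}=1}[ii']$ modulo a $\nu$-null set. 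For (3), only $(k,i)=(l,j)$ gives a $\nu_{\Z}$-positive overlap of $[i]+k$ and $[j]+l$, by disjointness of the $B_{i}$'s inside $[0,1]$ and non-atomicity of $\nu$, and the normalisation of $\varphi_{i}$ supplies the $1$.

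For (5), applying the change-of-variables formula inside $\|Uf\|^{2}$ lets the Jacobian exactly cancel the weight on each cylinder $[ji]+k$, producing
\begin{equation*}
\|Uf\|^{2}=\sum_{i\in\underline{N}}\int_{[i]+S_{i}}|f|^{2}\,\d\nu_{\Z},\qquad S_{i}:=\{j+Nk:j\in\underline{N},\,k\in\Z,\,A_{ji}=1\}.
\end{equation*}
Since $(j,k)\mapsto j+Nk$ is a bijection from $\underline{N}\times\Z$ onto $\Z$, one has $\|f\|^{2}=\sum_{i}\int_{[i]+\Z}|f|^{2}\,\d\nu_{\Z}$, so $U^{*}U=\id$ if and only if $S_{i}=\Z$ for every $i$, i.e.\ if and only if every $A_{ji}=1$.

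Part (4) is the delicate point because $U^{*}$ must be identified explicitly. Pairing $\langle Uf,g\rangle$, applying the above change of variables, and performing the translation substitution $w=z+j+Nk$ yields
\begin{equation*}
(U^{*}g)(w)=\sqrt{p_{j}\pi_{ji}/p_{i}}\,g(\tau_{j}(w-j-Nk)+k)\qquad\text{for }w\in[i]+(j+Nk),\ A_{ji}=1,
\end{equation*}
and zero otherwise. Composing, the weights collapse once more and $UU^{*}g(x)=g(x)\cdot\sum_{k,j,i:A_{ji}=1}\mathbbm{1}_{[ji]+k}(x)$. The key observation is that for $\nu_{\Z}$-a.e.\ $x$, setting $m_{0}=\lfloor x\rfloor$ and $i_{0}$ so that $x-m_{0}\in[i_{0}]$, and letting $i_{1}$ be the second letter in the coding of $x-m_{0}$, the triple $(k,j,i)=(m_{0},i_{0},i_{1})$ is the unique one making $\mathbbm{1}_{[ji]+k}(x)=1$, and $A_{i_{0}i_{1}}=1$ holds automatically by the MIM structure. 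Hence the sum equals $1$ and $UU^{*}=\id$ irrespective of $A$. The main obstacle is precisely this asymmetry: the sum counting \emph{backward} admissible extensions of an existing coding is always $1$, whereas the sum counting \emph{forward} extensions equals $1$ only when $A_{ij}\equiv 1$, which is the structural reason why $U$ is always a coisometry but an isometry only for the full shift.
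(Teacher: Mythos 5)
Your proof is correct and follows essentially the same route as the paper: direct expansion from the definitions combined with the Radon--Nikodym identity $\d(\nu\circ\tau_{j})/\d\nu=p_{j}\pi_{ji}/p_{i}$ on $[i]$, which is precisely how the paper proceeds --- items (\ref{enu:Prop U (1)})--(\ref{enu:,Prop U (4)}) are there deduced from the general Proposition \ref{pro:The-operatorsUT-1} on the family $\left(U^{(n)}\right)_{n\in\Z}$, whose proof is the same computation specialized to $n=1$, and item (\ref{enu:-Prop U (5)}) is established via the pointwise form $U^{*}Uf(x)=\sum_{k,j,i}A_{ji}\mathbbm{1}_{[i]}(x-j-Nk)\, f(x)$, of which your norm identity $\|Uf\|^{2}=\sum_{i}\int_{[i]+S_{i}}|f|^{2}\,\d\nu_{\Z}$ is just the integrated version. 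The only genuine addition in your write-up is the closing structural remark on why $U$ is always a coisometry but an isometry only for the full shift, which is a nice complement to the paper's terser treatment.
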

For an explicit formula of $U^{*}$ see (\ref{eq:U*}). As an example
for this setting we consider the $\beta$-transformation. 
\begin{example}
[$\beta$-Transformation]\label{exa:-Transformation} Let $\beta:=\frac{1+\sqrt{5}}{2}$
denote the golden mean. Then the $\beta$-transformation is given
by $F:\left[0,1\right]\to\left[0,1\right]$, $x\mapsto\beta x\mod1$
(see Figure \ref{fig:-betatransform} for the graph of $F$). This
map can be considered as a MIM as follows. In this case we have $X:=[0,1]$
and the inverse branches are $\tau_{0}(x):=\frac{x}{\beta}$, $x\in[0,1]$,
and $\tau_{1}(x):=\frac{x+1}{\beta}$, $x\in[0,\beta-1]$. We may
choose the two intervals $B_{0}:=[0,\beta-1]$ and $B_{1}:=[\beta-1,1]$
and the corresponding transition matrix is then given by $A:=\left(\begin{array}{cc}
1 & 1\\
1 & 0\end{array}\right)$. 

\begin{figure}
\includegraphics[scale=0.3]{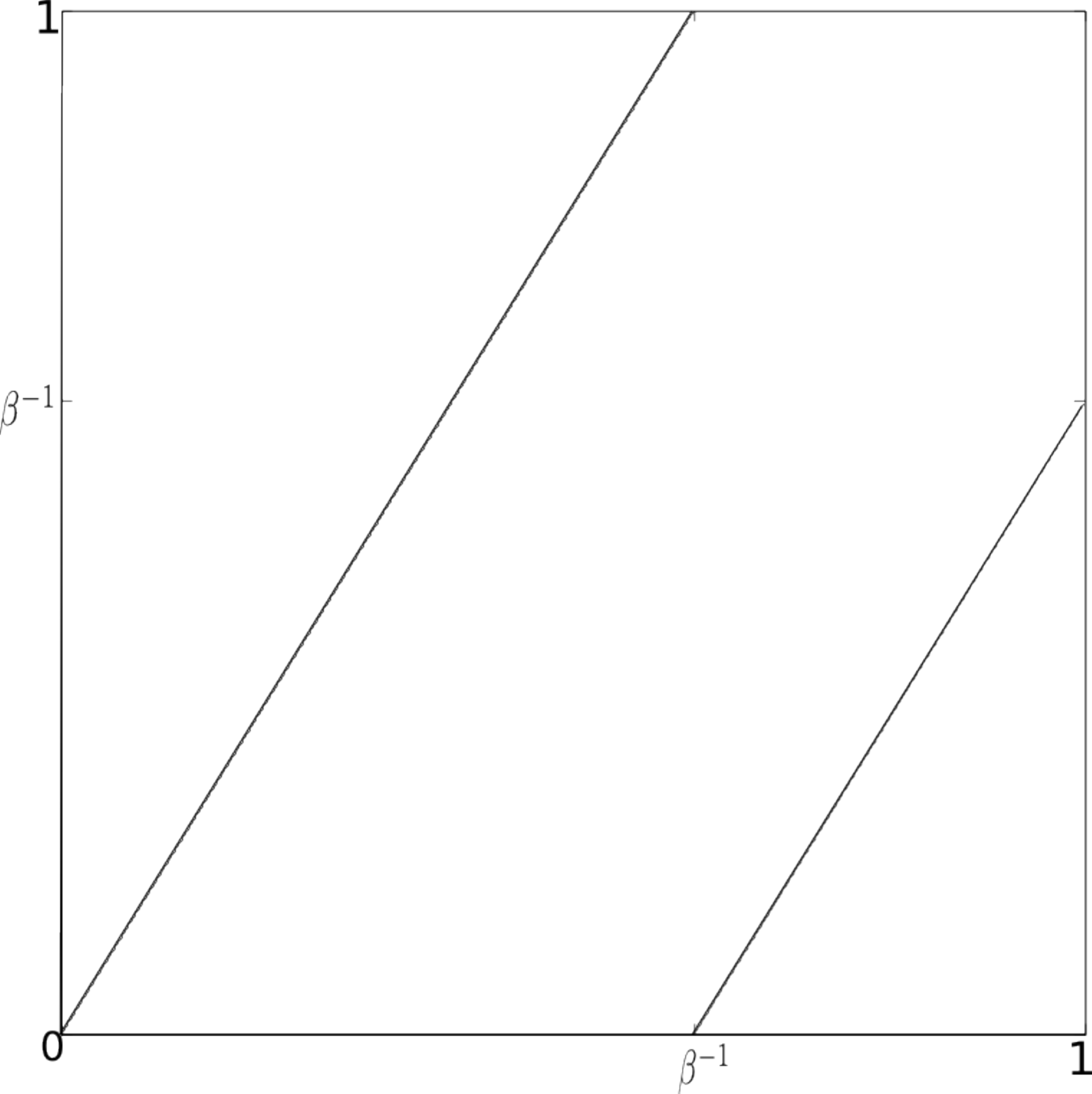}\caption{\label{fig:-betatransform}The graph of the $\beta$-transform.}

\end{figure}

From \cite{Re57,Pa60} we know that there exists an invariant measure
$\nu$ absolutely continuous to the Lebesgue measure restricted to
$[0,1]$ with density $h$ given by \[
h(x):=\begin{cases}
\frac{5+3\sqrt{5}}{10} & \text{for}\,\,0\leq x<\frac{\sqrt{5}-1}{2},\\
\frac{5+\sqrt{5}}{10} & \text{for}\,\,\frac{\sqrt{5}-1}{2}\leq x<1.\end{cases}\]
The measure $\nu$ can be represented on $\Sigma_{A}$ by a stationary
Markov measure with the stochastic matrix \[
\Pi:=\left(\begin{array}{cc}
\beta-1 & 2-\beta\\
1 & 0\end{array}\right)\]
 and probability vector $p:=\left(\frac{\beta}{\sqrt{5}},\frac{\beta-1}{\sqrt{5}}\right)$.
The scaling operator $U$ acting on $L^{2}\left(\nu_{\Z}\right)$
is then given for $x\in\R$ by

\begin{align*}
Uf(x)= & \sum_{k\in\Z}\Big(\sqrt{\beta}\mathbbm{1}_{[0,\beta^{-2})}(x-k)+\mathbbm{1}_{[\beta^{-2},\beta^{-1})}(x-k)\\
 & \,\,\,\,\,\,\,\,+\beta\cdot\mathbbm{1}_{[\beta^{-1},1)}(x-k)\Big)\cdot f\left(\beta(x-k)+2k\right).\end{align*}
 For the father wavelets we may choose $\varphi_{0}=\left(\sqrt{5}/\beta\right)^{1/2}\mathbbm{1}_{[0,\beta-1)}$
and $\varphi_{1}=\left(\sqrt{5}\beta\right)^{1/2}\mathbbm{1}_{[\beta-1,1)}$.
We illustrate the action of $U$ in Figure \ref{fig:Application-of}
where $U$ is applied to the identity map $\id_{\left[0,1\right]}:x\mapsto x$,
restricted to $\left[0,1\right]$, that is for $x\in[0,1]$ we have
\[
U\left(\id_{\left[0,1\right]}\right)x=\left(\sqrt{\beta}\mathbbm{1}_{[0,\beta^{-2})}(x)+\mathbbm{1}_{[\beta^{-2},\beta^{-1})}(x)+\beta\cdot\mathbbm{1}_{[\beta^{-1},1)}(x)\right)\beta x.\]

\end{example}
\begin{figure}
\includegraphics[width=0.6\textwidth]{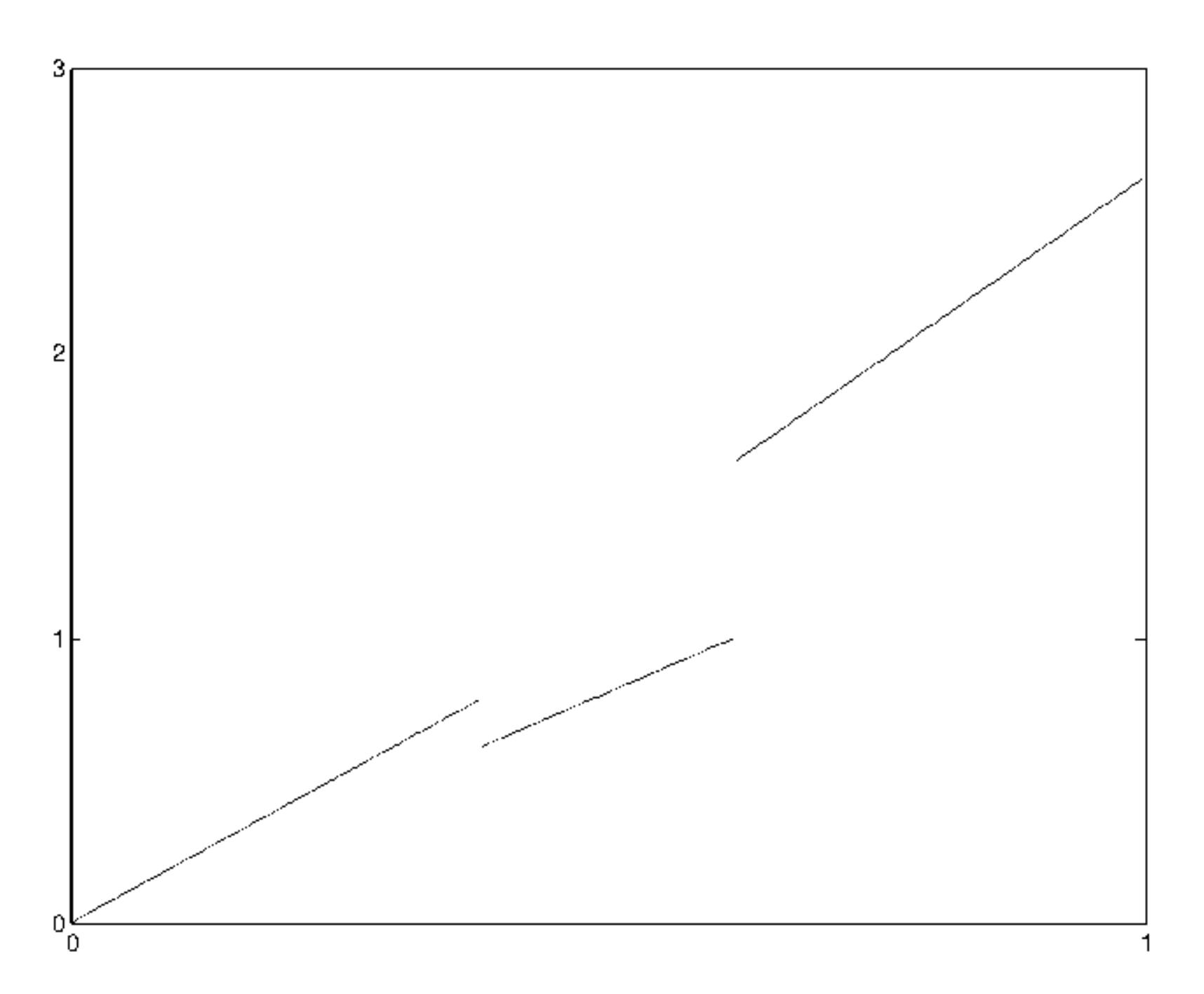}

\caption{\label{fig:Application-of}The graph of $U\left(\id_{[0,1]}\right)$.}

\end{figure}

We further generalize our construction by considering non-atomic probability
measures $\nu$ on $X$ which we do not assume to be Markovian. In
this case it is natural to consider more than just one scaling operator
$U$. More precisely, we consider a family of scaling operators $\left(U^{(n)}\right)_{n\in\Z}$
which allow us to construct an orthonormal wavelet basis. For this
we define $U^{(0)}:=\id$ and for $f\in L^{2}(\nu_{\Z})$ and $n\in\N$
we let \begin{equation}
\begin{array}{ll}
U^{(n)}f(x) & :={\displaystyle \sum_{k\in\Z}\sum_{\omega\in\Sigma_{A}^{n}}\sum_{j\in\underline{N}}\sqrt{\frac{\nu_{\Z}([j])}{\nu_{\Z}([\omega j])}}\1_{[\omega j]}(x-k)}\\
 & \qquad\qquad\qquad\;\cdot f\left(\tau_{\omega}^{-1}(x-k)+\sum_{i=0}^{n-1}\omega_{n-1-i}N^{i}+N^{n}k\right)\end{array}\label{eq:Def U^n}\end{equation}

and\begin{eqnarray}
U^{(-n)}f(x) & := & \sum_{a}\sum_{k\in\Z}\sum_{\omega\in\Sigma_{A}^{n}}\sum_{j\in\underline{N}}\sqrt{\frac{\nu_{\Z}([\omega j])}{\nu_{\Z}([j])}}\1_{[j]}\left(x-\sum_{i=0}^{n-1}\omega_{n-1-i}N^{i}-N^{n}k\right)\nonumber \\
 &  & \ \qquad\qquad\ \qquad\qquad\cdot f\left(\tau_{\omega}\left(x-\sum_{i=0}^{n-1}\omega_{n-1-i}N^{i}-N^{n}k\right)+k\right).\label{eq:Def U^-n}\end{eqnarray}
 It is straight forward to verify that if the measure $\nu$ is Markovian,
then we have $U^{(n)}=U^{n}$ for $n\in\N_{0}$ and $U^{(-n)}=\left(U^{*}\right)^{n}$,
$n\in\N$. More details are provided in Section \ref{sub:MRA-for-Markov}.
Furthermore, the operators $\left(U^{(n)}\right)_{n\in\Z}$ and $T$
satisfy the following relations.
\begin{prop}
\label{pro:The-operatorsUT-1} Let $\left(\varphi_{j}\right)_{j\in\underline{N}}$
denote the family of father wavelets given by $\varphi_{i}=\left(\nu_{\Z}([i])\right)^{-1/2}\mathbbm{1}_{[i]}$,
$i\in\underline{N}$. The translation operator $T$ and the family
of scaling operators $\left(U^{(n)}\right)_{n\in\Z}$ satisfy the
following.
\begin{enumerate}
\item \label{enu:(1)TU-1} $TU^{(n)}=U^{(n)}T^{N^{n}}$, $n\in\N$,
\item \label{enu:(2)Znt-1}$U^{(-n)}T\varphi_{j}=T^{N^{n}}U^{(-n)}\varphi_{j}$,
$n\in\N$, $j\in\underline{N}$,
\item \label{enu:(3)TU-1-1} $\varphi_{i}=U^{(1)}T^{i}\sum_{j\in\underline{N}}\sqrt{\frac{\nu_{\Z}([ij])}{\nu_{\Z}([i])}}\varphi_{j}$,
$i\in\underline{N}$,
\item \label{enu:(4)TU-1} $\langle U^{(n)}T^{k}\varphi_{i}|U^{(n)}T^{l}\varphi_{j}\rangle=\delta_{(k,i),(l,j)}$,
$n,k,l\in\Z$, $i,j\in\underline{N}$,
\item \label{enu:(5)UZ-1}$U^{(n)}U^{(-n)}=\id$, $n\in\N$,
\item \label{enu:(6)ZU-1}if $U^{(n)}T^{k}\varphi_{j}\neq0$, then $U^{(-n)}U^{(n)}T^{k}\varphi_{j}=T^{k}\varphi_{j}$,
$n\in\N$, $k\in\Z$, $j\in\underline{N}$. 
\end{enumerate}
\end{prop}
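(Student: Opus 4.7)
The plan is to verify each of the six relations by direct computation from the explicit formulas defining $U^{(n)}$ and $U^{(-n)}$. The essential mechanism throughout is that each father wavelet $\varphi_j$ is supported in $[j] \subset [0,1]$, so integer shifts appearing in the arguments of $\varphi_j$ (or $f$) must vanish for any term in the defining triple sums to contribute; together with the Markov partition property $\sum_{j:\, A_{\omega_{n-1}j} = 1} \1_{[\omega j]} = \1_{[\omega]}$ $\nu_\Z$-a.e., this forces the sums to collapse to at most one admissible term when evaluated on a father wavelet.

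For item (1) I would substitute $x \mapsto x - 1$ in the formula for $U^{(n)} f(x)$ and reindex the $k$-sum via $k \mapsto k + 1$; the net effect is to shift the argument of $f$ by $-N^n$, which is exactly $U^{(n)} T^{N^n} f(x)$. Item (2) follows by the analogous substitution inside $U^{(-n)}$ applied to $T \varphi_j$, where $\supp \varphi_j \subset [0,1]$ collapses the $k''$-sum to $k'' = 1$ on the left and $k'' = 0$ on the right. For item (3), I would expand $U^{(1)} T^i \bigl(\sum_j \sqrt{\nu_\Z([ij])/\nu_\Z([i])}\,\varphi_j\bigr)$ from the definition; the shift $m + N k - i$ appearing in the argument of $\varphi_j$ forces $k = 0$ and $m = i$, after which $\1_{[ij]}(x) \1_{[j']}(\tau_i^{-1}(x)) = \delta_{jj'} \1_{[ij]}(x)$, the square-root prefactors telescope to $(\nu_\Z([i]))^{-1/2}$, and $\sum_j \1_{[ij]} = \1_{[i]}$ produces $\varphi_i$.

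For item (4), I would first show that, whenever nonzero, $U^{(n)} T^k \varphi_j = (\nu_\Z([\omega j]))^{-1/2} \1_{[\omega j]}(\cdot - k')$ for the unique admissible $(\omega, k') \in \Sigma_A^n \times \Z$ with $N^n k' + \sum_i \omega_{n-1-i} N^i = k$; this collapse comes from requiring the integer shift $\sum_i \omega_{n-1-i} N^i + N^n k' - k$ inside $\varphi_j$ to vanish. Orthonormality then follows because for distinct admissible data $(\omega, j, k')$ and $(\omega', j', l')$ the cylinder supports $[\omega j] + k'$ and $[\omega' j'] + l'$ are $\nu_\Z$-disjoint, while the single-term norm equals $(\nu_\Z([\omega j]))^{-1} \nu_\Z([\omega j] + k') = 1$ by translation invariance of $\nu_\Z$. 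For item (5), composing $U^{(n)} U^{(-n)} f(x)$ produces the shift $\sum_i (\omega_{n-1-i} - \omega'_{n-1-i}) N^i + N^n(k - k')$ inside $f$, which must vanish by the indicator constraints; this forces $\omega = \omega'$ and $k = k'$, the weights cancel, and $\sum_k \sum_\omega \sum_j \1_{[\omega j]}(x - k) = \1_{\supp \nu_\Z}(x)$ gives the identity on $L^2(\nu_\Z)$.

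For item (6), the key input is again the collapsed form of $U^{(n)} T^k \varphi_j$ derived in item (4): applying $U^{(-n)}$ to $(\nu_\Z([\omega j]))^{-1/2} \1_{[\omega j]}(\cdot - k')$ and performing the analogous single-term collapse in the defining sum for $U^{(-n)}$ returns $T^k \varphi_j$ exactly. The main obstacle will be the bookkeeping in items (4)–(6): because $U$ is in general not unitary (see Proposition \ref{pro:eigenschaften UT-1}), the admissibility constraint $\omega \in \Sigma_A^n$, rather than $\omega \in \underline{N}^n$, is what enforces the correct cancellations, and it must be tracked carefully through every change of variables, particularly when decomposing $k$ in base $N$ to locate the unique admissible pair $(\omega, k')$.
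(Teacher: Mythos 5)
Your overall strategy --- direct computation from the defining formulas (\ref{eq:Def U^n}) and (\ref{eq:Def U^-n}), with the sums collapsed by the constraint that $\supp\varphi_{j}\subset[j]\subset[0,1]$ forces every stray integer shift to vanish --- is exactly the route the paper takes, and your treatments of items (1), (2), (3), (5) and (6) match the paper's computations essentially step for step, including the key intermediate identity $U^{(n)}T^{k}\varphi_{j}=\left(\nu_{\Z}([\omega j])\right)^{-1/2}T^{k'}\mathbbm{1}_{[\omega j]}$ for the unique admissible $(\omega,k')$ and the normalization $\sum_{k}\sum_{\omega}\sum_{j}\1_{[\omega j]}(\cdot-k)=1$ $\nu_{\Z}$-a.e.\ used in item (5).

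There is, however, one genuine gap: item (4) is asserted for all $n\in\Z$, and your argument covers only $n\geq0$. The collapsed single-cylinder form on which your orthonormality computation rests is false for negative indices; by (\ref{eq:Def U^-n}) one has
\[
U^{(-n)}T^{k}\varphi_{i}=\sum_{\omega\in\Sigma_{A}^{n}:\,\omega_{0}=i}\ \sum_{j\in\underline{N}}\sqrt{\frac{\nu_{\Z}([\omega j])}{\nu_{\Z}([i])}}\, T^{\sum_{m=0}^{n-1}\omega_{n-1-m}N^{m}+N^{n}k}\varphi_{j},
\]
which is a sum of many translated father wavelets rather than a single rescaled indicator, so the ``single-term norm equals $1$'' argument does not apply. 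The paper treats this case separately: the translates $T^{m}\varphi_{j}$ appearing above are orthonormal (item (4) with $n=0$), the integer shift determines $(\omega,k)$ uniquely via the base-$N$ expansion --- in particular its leading digit is $\omega_{0}=i$, so distinct pairs $(k,i)$ produce families of pairwise orthogonal translates --- and for $(k,i)=(l,m)$ the squared coefficients sum to $\sum_{\omega,j}\nu_{\Z}([\omega j])/\nu_{\Z}([i])=1$ because the cylinders $[\omega j]$ with $\omega_{0}=i$ partition $[i]$ up to $\nu$-null sets. This is only a short additional computation, but it rests on a different mechanism (a unit-norm coefficient vector over an orthonormal family) than the cylinder-disjointness argument you give for $n\geq0$, so it must be supplied explicitly; without it the two-sided statement, which is what feeds into Definition \ref{def:MRA allgemein} (\ref{enu:DefMRA4-1}) for the negative-index spaces $V_{-n}$, is not established.
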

The properties of $\left(U^{(n)}\right)_{n\in\Z}$ and $T$ lead us
to the following abstract definition of a multiresolution analysis
which involves more than one father wavelet. In the literature these
functions are sometimes called multiwavelets (cf. \cite{Al93}).
\begin{defn}
[Abstract MRA]\label{def:MRA allgemein}Let $\mu$ be a non-atomic
measure on $\left(\R,\mathcal{B}\right)$.
\begin{enumerate}
\item Let $\left(\U^{(n)}\right)_{n\in\Z}$ and $\T$ be bounded, linear
operators on $L^{2}(\mu)$ such that $\T$ is unitary and $\U^{(0)}=\id$.
We say $\left(\mu,\left(\U^{(n)}\right)_{n\in\Z},\T\right)$ allows
a\textit{\emph{ }}\emph{two-sided multiresolution analysis (two-sided
MRA)} if there exists a family $\left\{ V_{j}:\, j\in\Z\right\} $
of closed subspaces of $L^{2}\left(\mu\right)$ and a family of functions
(called \emph{father wavelets}) $\varphi_{j}\in L^{2}\left(\mu\right)$,
$j\in\underline{N}$, $N\in\N$, with compact support, such that the
following conditions are satisfied. 

\begin{enumerate}
\item $\dots\subset V_{-2}\subset V_{-1}\subset V_{0}\subset V_{1}\subset V_{2}\subset\dots$,
\label{enu:DefMRA1-1}
\item $\text{cl}\bigcup_{j\in\Z}V_{j}=L^{2}(\mu)$,\label{enu:DefMRA2-1}
\item $\bigcap_{j\in\mathbb{Z}}V_{j}=\{0\}$, \label{enu:DefMRA3-1}
\item $\left(\U^{(n)}\left\{ \T^{k}\varphi_{j}:\; k\in\mathbb{Z},\, j\in\underline{N}\right\} \right)\backslash\left\{ 0\right\} $,
$n\in\Z$, is an orthonormal basis of $V_{n}$,\label{enu:DefMRA4-1}
\item $\U^{(n)}\left\{ \T^{k}\varphi_{i}:k\in\underline{N^{n}},i\in\underline{N}\right\} \subset\spn\U^{(n+1)}\left\{ \T^{k}\varphi_{i}:k\in\underline{N^{n+1}},i\in\underline{N}\right\} $,
$n\in\N_{0}$ and $\U^{(-n)}\left\{ \varphi_{i}:i\in\underline{N}\right\} \subset\spn\U^{(-n+1)}\left\{ \T^{k}\varphi_{i}:i\in\underline{N},k\in\underline{N}\right\} $,\label{enu:-DefMRA5-1}
$n\in\N$,
\item $\T\U^{(n)}|_{V_{0}}=\U^{(n)}\T^{N^{n}}|_{V_{0}}$ and $\U^{(-n)}\T|_{V_{0}}=\T^{N^{n}}\U^{(-n)}|_{V_{0}}$,
$n\in\N$.\label{enu:DefMRA6-1}
\end{enumerate}
\item Let $\left(\U^{(n)}\right)_{n\in\N_{0}}$ and $\T$ be linear operators
on $L^{2}(\mu)$, $\T$ unitary, and let $\U^{(0)}=\id$. We say $\left(\mu,\left(\U^{(n)}\right)_{n\in\N_{0}},\T\right)$
allows a \textit{one-sided}\textit{\emph{ }}\emph{multiresolution
analysis (}\textit{one-sided}\emph{ MRA)} if there exists a family
$\left(V_{j}:\, j\in\N_{0}\right)$ of closed subspaces of $L^{2}\left(\mu\right)$
and a family of functions (called \emph{father wavelets}) $\varphi_{j}\in L^{2}\left(\mu\right)$,
$j\in\underline{N}$, $N\in\N$, with compact support, such that the
following conditions are satisfied. 

\begin{enumerate}
\item $V_{0}\subset V_{1}\subset V_{2}\subset\dots$, \label{enu:DefMRA2-1-1}
\item $\text{cl}\bigcup_{j\in\N_{0}}V_{j}=L^{2}(\mu)$,\label{enu:DefMRA2-2-1}
\item $\left(\U^{(n)}\left\{ \T^{k}\varphi_{j}:\; k\in\mathbb{Z},\, j\in\underline{N}\right\} \right)\backslash\left\{ 0\right\} $,
$n\in\N_{0}$, is an orthonormal basis of $V_{n}$,\label{enu:DefMRA3-2} 
\item $\U^{(n)}\left\{ \T^{k}\varphi_{i}:k\in\underline{N^{n}},i\in\underline{N}\right\} \subset\spn\U^{(n+1)}\left\{ \T^{k}\varphi_{i}:k\in\underline{N^{n+1}},i\in\underline{N}\right\} $,
$n\in\N$,\label{enu:DefMRA4-2}
\item $\T\U^{(n)}|_{V_{0}}=\U^{(n)}\T^{N^{n}}|_{V_{0}}$, $n\in\N$.\label{enu:DefMRA5-2}
\end{enumerate}
\end{enumerate}
\end{defn}
Our next theorem shows that the abstract MRA holds in particular for
MIM as introduced above.
\begin{thm}
\label{thm:MRa}Let $\left(U^{(n)}\right)_{n\in\N_{0}}$ be given
as in (\ref{eq:Def U^n}). Then $\left(\nu_{\Z},\left(U^{(n)}\right)_{n\in\N_{0}},T\right)$
allows a one-sided MRA, where the father wavelets are set to be $\varphi_{i}:=\left(\nu_{\Z}([i])\right)^{-1/2}\mathbbm{1}_{[i]}$,
$i\in\underline{N}$. 
\end{thm}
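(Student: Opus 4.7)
\emph{Strategy.} Set, for $n\in\N_{0}$,
\[
V_{n} \;:=\; \overline{\spn\bigl(\,U^{(n)}\{T^{k}\varphi_{j}:k\in\Z,\,j\in\underline{N}\}\setminus\{0\}\,\bigr)}.
\]
With this choice, condition (c) of Definition~\ref{def:MRA allgemein}(2) is automatic once orthonormality of the spanning family is known, and orthonormality is precisely Proposition~\ref{pro:The-operatorsUT-1}(4). It thus remains to check (a), (b), (d), and (e).

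The core step is an explicit formula for the generators of $V_{n}$. For $k\in\Z$, write $k = N^{n}q + r$ with $q\in\Z$ and $r\in\{0,\dots,N^{n}-1\}$, and let $\omega=\omega(r)\in\underline{N}^{n}$ be the unique word with $r = \sum_{i=0}^{n-1}\omega_{n-1-i}N^{i}$. Substituting $T^{k}\varphi_{j} = (\nu_{\Z}([j]))^{-1/2}\1_{[j]+k}$ into (\ref{eq:Def U^n}) and observing that the condition $\1_{[j]+k}$ on the argument of $f$ uniquely pins the summation indices in (\ref{eq:Def U^n}) to $(k',\omega',j') = (q,\omega,j)$, one obtains
\[
U^{(n)}T^{k}\varphi_{j} \;=\; \bigl(\nu_{\Z}([\omega j])\bigr)^{-1/2}\,\1_{[\omega j]+q}
\]
when $[\omega j]\neq\emptyset$, and $U^{(n)}T^{k}\varphi_{j}=0$ otherwise. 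In particular, the nonzero generators of $V_{n}$ are exactly the $L^{2}(\nu_{\Z})$-normalised indicators of the integer translates of admissible cylinders of length $n+1$.

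With this identity in hand, (a), (d), (e) are essentially formal. The partition $[\omega j] = \bigcup_{i:A_{ji}=1}[\omega j i]$ (disjoint up to a $\nu_{\Z}$-null set) writes each generator of $V_{n}$ as a finite linear combination of generators of $V_{n+1}$ with shift index $Nk+i$, which yields the span containment (d) and, a fortiori, the nesting (a); condition (e) is Proposition~\ref{pro:The-operatorsUT-1}(1) restricted to $V_{0}$.

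The only substantive remaining task is density, condition (b). The union $\bigcup_{n\in\N_{0}}V_{n}$ contains (up to normalisation) every $\1_{[\omega]+k}$ for admissible $\omega$ of any finite length and every $k\in\Z$. Since $F$ is uniformly expanding on each $B_{i}$, the diameters of $n$-cylinders tend uniformly to zero, so the algebra generated by these indicators and their $\Z$-translates generates the Borel $\sigma$-algebra on $\supp(\nu_{\Z})$; combined with the non-atomicity of $\nu$, this gives $L^{2}(\nu_{\Z})$-density via the standard approximation by simple functions. The main technical hurdle I anticipate is precisely this density argument---making the diameter-shrinking estimate uniform in the MIM setting and reducing general $L^{2}(\nu_{\Z})$-functions to cylinder indicators---together with careful bookkeeping in the computational lemma in the presence of the Markov constraint $A$ and the case distinction when $[\omega j]=\emptyset$.
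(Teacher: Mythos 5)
Your proof is correct and follows essentially the same route as the paper: the same choice of spaces $V_{n}$, the same explicit identification of $U^{(n)}T^{k}\varphi_{j}$ with normalised translated cylinder indicators (this is exactly the paper's formula (\ref{eq:funkUnTkphij})), cylinder refinement for the nesting and span conditions, Proposition \ref{pro:The-operatorsUT-1} for orthonormality and the commutation relation, and density of translated cylinder indicators for condition (b) --- where the paper splits into a totally disconnected case (Stone--Weierstrass) and an interval case, while you argue uniformly via shrinking cylinder diameters, which is a harmless repackaging. One cosmetic slip worth fixing: in the refinement step the shift index of the $V_{n+1}$-generator corresponding to $[\omega j i]$ is $Nk+j$ (the appended letter $i$ is carried by the choice of father wavelet $\varphi_{i}$, not by the translation), not $Nk+i$; this does not affect the validity of the containment argument.
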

For the abstract MRA we show that there always exists an orthonormal
wavelet basis. 
\begin{thm}
\label{thm:MRA}Let $\mu$ be a non-atomic measure on $\R$, $\left(\U^{(n)}\right)_{n\in\Z}$
be a family of bounded, linear operators on $L^{2}(\mu)$ and $\T$
be a unitary operator on $L^{2}(\mu)$. If $\left(\mu,\left(\U^{(n)}\right)_{n\in\Z},\T\right)$
allows a two-sided MRA with father wavelets $\varphi_{j}$, $j\in\underline{N}$,
then there exist for every $n\in\N_{0}$ numbers $d_{n}\in\underline{N^{n+2}}$,
$d_{-n}\in\underline{N^{2}},$ $q_{n}\in\underline{N^{n+1}}$, $q_{-n}\in\underline{N}$,
with $d_{n}\geq q_{n}$, $d_{-n}\geq q_{-n}$, and two families of
mother wavelets $\left(\psi_{n,l}:l\in\underline{d_{n}-q_{n}}\right)$,
$\left(\psi_{-n,l}:l\in\underline{d_{-n}-q_{-n}}\right)$, $n\in\N_{0}$,
such that the following set of functions defines an orthonormal basis
for $L^{2}(\mu)$ \begin{eqnarray*}
 &  & \left\{ \T^{k}\psi_{n,l}:\, n\in\N_{0},\, l\in\underline{d_{n}-q_{n}},\, k\in\Z\right\} \\
 &  & \qquad\qquad\qquad\qquad\qquad\qquad\cup\left\{ \T^{N^{n}k}\psi_{-n,l}:\, n\in\N,\, l\in\underline{d_{-n}-q{}_{-n}},\, k\in\Z\right\} .\end{eqnarray*}
\end{thm}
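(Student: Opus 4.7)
\medskip

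\noindent\textbf{Proof plan.}
The strategy is to identify a finite dimensional ``fundamental domain'' for each $V_n$ under translation, show the appropriate translates of it tile $V_n$ orthogonally, then define the mother wavelets as an orthonormal basis of the orthogonal complement inside the next larger fundamental domain, and finally spread everything out by translation.

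\smallskip
\noindent\emph{Step 1 (fundamental domains for $V_n$, $n\geq0$).}
For $n\in\N_{0}$ define $V_{n}^{0}:=\spn\{\U^{(n)}\T^{r}\varphi_{j}:r\in\underline{N^{n}},\,j\in\underline{N}\}$. Given $k\in\Z$, write $k=qN^{n}+r$ with $r\in\underline{N^{n}}$; iterating the commutation $\T\U^{(n)}=\U^{(n)}\T^{N^{n}}$ from (f) on $V_{0}$ yields $\U^{(n)}\T^{k}\varphi_{j}=\T^{q}\U^{(n)}\T^{r}\varphi_{j}$. Together with condition (d), this identifies the nonzero vectors among $\U^{(n)}\T^{r}\varphi_{j}$ as an ON basis of $V_{n}^{0}$ (so $\dim V_{n}^{0}=:q_{n}\in\underline{N^{n+1}}$) and shows that distinct $\T$-translates of $V_{n}^{0}$ are mutually orthogonal, whence $V_{n}=\bigoplus_{q\in\Z}\T^{q}V_{n}^{0}$.

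\smallskip
\noindent\emph{Step 2 (fundamental domains for $V_{-m}$, $m\geq1$).}
Here the key commutation relation is $\U^{(-m)}\T=\T^{N^{m}}\U^{(-m)}$ on $V_{0}$, which gives $\U^{(-m)}\T^{k}\varphi_{j}=\T^{N^{m}k}\U^{(-m)}\varphi_{j}$. Setting $V_{-m}^{0}:=\spn\{\U^{(-m)}\varphi_{j}:j\in\underline{N}\}$ and letting $q_{-m}\in\underline{N}$ be its dimension, condition (d) again yields $V_{-m}=\bigoplus_{k\in\Z}\T^{N^{m}k}V_{-m}^{0}$. To compare $V_{-m}$ with $V_{-m+1}$, rewrite the generators of $V_{-m+1}$ with translations by $\T^{N^{m}}$ instead of $\T^{N^{m-1}}$: splitting $k=Nk'+s$ with $s\in\underline{N}$ and using the iterated relation $\T^{N^{m-1}s}\U^{(-m+1)}\varphi_{j}=\U^{(-m+1)}\T^{s}\varphi_{j}$, obtain $V_{-m+1}=\bigoplus_{k\in\Z}\T^{N^{m}k}\widetilde{V}_{-m+1}^{0}$ with
\[
\widetilde{V}_{-m+1}^{0}:=\spn\{\U^{(-m+1)}\T^{s}\varphi_{j}:s\in\underline{N},\,j\in\underline{N}\},\quad d_{-m}:=\dim\widetilde{V}_{-m+1}^{0}\in\underline{N^{2}}.
\]

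\smallskip
\noindent\emph{Step 3 (mother wavelets and their translates).}
Condition (e) gives the inclusions $V_{n}^{0}\subset V_{n+1}^{0}$ for $n\geq0$ and $V_{-m}^{0}\subset\widetilde{V}_{-m+1}^{0}$ for $m\geq1$. Pick an orthonormal basis $(\psi_{n,l})_{l\in\underline{d_{n}-q_{n}}}$ of $V_{n+1}^{0}\ominus V_{n}^{0}$ (with $d_{n}:=q_{n+1}\in\underline{N^{n+2}}$), respectively $(\psi_{-m,l})_{l\in\underline{d_{-m}-q_{-m}}}$ of $\widetilde{V}_{-m+1}^{0}\ominus V_{-m}^{0}$. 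Since $\T$ is unitary and the translates of the fundamental domains decompose $V_{n+1}$ orthogonally (Step 1, Step 2), applying $\T^{q}$ (respectively $\T^{N^{m}k}$) to these ON bases and taking the orthogonal direct sum over $q\in\Z$ (respectively $k\in\Z$) yields
\[
V_{n+1}\ominus V_{n}=\bigoplus_{k\in\Z}\T^{k}\spn\{\psi_{n,l}:l\in\underline{d_{n}-q_{n}}\}\qquad(n\geq0),
\]
\[
V_{-m+1}\ominus V_{-m}=\bigoplus_{k\in\Z}\T^{N^{m}k}\spn\{\psi_{-m,l}:l\in\underline{d_{-m}-q_{-m}}\}\qquad(m\geq1),
\]
each coming equipped with the indicated ON basis.

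\smallskip
\noindent\emph{Step 4 (assembling the basis).}
Conditions (a)--(c) of the two-sided MRA are exactly what is needed to conclude $L^{2}(\mu)=\bigoplus_{n\in\Z}(V_{n+1}\ominus V_{n})$, and substituting the bases from Step~3 produces the two families announced in the theorem. The main obstacle will be Step~2: one has to correctly re-index the translates of $V_{-m+1}$ so that the coarser lattice $\T^{N^{m}}$ governs both $V_{-m}$ and $V_{-m+1}$, and simultaneously verify that the vectors $\U^{(-m+1)}\T^{s}\varphi_{j}$ (including those that may vanish) still assemble into an orthonormal set spanning $\widetilde{V}_{-m+1}^{0}$; careful bookkeeping of zero vectors in condition (d), which are silently discarded from the ON basis, is the delicate point throughout.
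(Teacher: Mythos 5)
Your proposal is correct and follows essentially the same route as the paper: both decompose $W_{n}=V_{n+1}\ominus V_{n}$ into $\T$-translates (respectively $\T^{N^{n}}$-translates for negative $n$) of a finite-dimensional piece and take the mother wavelets to be an orthonormal basis of the orthogonal complement of the coarser fundamental domain inside the finer one, the key inputs being conditions (d)--(f) exactly as you use them. The only difference is presentational: where you abstractly ``pick an orthonormal basis'' of $V_{n+1}^{0}\ominus V_{n}^{0}$, the paper realizes this choice explicitly by expanding the generators in coefficient vectors $\left(a_{m}^{n,k}\right)$, $\left(b_{m}^{n,i}\right)$ and completing them to a unitary system via Gram--Schmidt --- an explicitness that is not needed for the bare existence statement but is reused later for the multiplicative and filter-function constructions.
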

\begin{rem}
We give a precise construction for the family of mother wavelets $\psi_{n,l}$
in Section \ref{sub:Proof-of-Theorem}. More precisely, for each $n\in\Z$
we consider the linear subspaces $W_{n}:=V_{n+1}\ominus V_{n}$, where
the closed subspaces $V_{n}$ of $L^{2}(\mu)$ are given in Definition
\ref{def:MRA allgemein} (\ref{enu:DefMRA4-1}), and the finite family
of functions $\left(\psi_{n,l}:l\in\underline{d_{n}-q_{n}}\right)$
and show that that for $n\geq0$ $\left\{ \T^{k}\psi_{n,l}:k\in\Z,l\in\underline{d_{n}-q_{n}}\right\} $,
and respectively for $n<0$ $\left\{ \T^{N^{|n|}k}\psi_{n,l}:k\in\Z,l\in\underline{d{}_{-|n|}-q{}_{-|n|}}\right\} $,
defines an orthonormal basis of $W_{n}$. 

Note that for IFS the mother wavelets are typically constructed in
terms of so-called filter functions. We will see in Section \ref{sec:Operator-algebra}
that an analog construction is still possible if the measure $\nu$
is Markovian.
\end{rem}
An immediate consequence of the proof of Theorem \ref{thm:MRA} is
the following corresponding result for the one-sided MRA.
\begin{cor}
\label{cor:onesided}Let $\mu$ be a non-atomic measure on $\R$,
$\left(\U^{(n)}\right)_{n\in\N_{0}}$ a family of bounded, linear
operators on $L^{2}(\mu)$ and $\T$ a unitary operator on $L^{2}(\mu)$.
If $\left(\mu,\left(\U^{(n)}\right)_{n\in\N_{0}},\T\right)$ allows
a one-sided MRA with the father wavelets $\varphi_{j}$, $j\in\underline{N}$,
then there exists for every $n\in\N_{0}$ numbers $d_{n}\in\underline{N^{n+2}}$,
$q_{n}\in\underline{N^{n+1}}$ with $d_{n}\geq q_{n}$ and a family
of mother wavelets $\left(\psi_{n,l}:l\in\underline{d_{n}-q_{n}}\right)$,
$n\in\N_{0}$, such that the following set of functions defines an
orthonormal basis for $L^{2}(\mu)$ \[
\left\{ \T^{k}\psi_{n,l}:\, n\in\N_{0},\, l\in\underline{d_{n}-q_{n}},\, k\in\Z\right\} \cup\left\{ \T^{k}\varphi_{i}:k\in\Z,n\in\underline{N}\right\} .\]

\end{cor}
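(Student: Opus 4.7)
The plan is to follow the construction used for Theorem \ref{thm:MRA} but truncate it to the non-negative indices, replacing the contribution of the negative-index spaces by $V_0$ itself, which in the one-sided setting already admits an explicit orthonormal basis in terms of the father wavelets.

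First, I would set $W_n := V_{n+1}\ominus V_n$ for every $n\in\N_0$, using that each $V_n$ is a closed subspace of the Hilbert space $L^2(\mu)$ and that $V_n\subset V_{n+1}$ by condition (a). Condition (b) gives density of $\bigcup_{n\in\N_0}V_n$ in $L^2(\mu)$, so telescoping and orthogonality of the $W_n$ yield the orthogonal decomposition
\begin{equation*}
L^2(\mu)=V_0\oplus\bigoplus_{n\in\N_0}W_n.
\end{equation*}
Condition (c) for $n=0$, together with $\U^{(0)}=\id$, states that $\{\T^k\varphi_i:k\in\Z,\,i\in\underline{N}\}$ is an orthonormal basis of $V_0$, which supplies the second family of functions appearing in the claimed basis.

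Next, for the spaces $W_n$ with $n\in\N_0$ I would invoke the construction carried out in Section \ref{sub:Proof-of-Theorem} for the two-sided theorem. That construction, as sketched in the remark following Theorem \ref{thm:MRA}, only uses the two consecutive spaces $V_n$ and $V_{n+1}$ together with conditions (c), (d) and the commutation relation $\T\U^{(n)}|_{V_0}=\U^{(n)}\T^{N^n}|_{V_0}$ of (e); these are precisely the one-sided conditions (c), (d) and (e). Hence for each $n\in\N_0$ it produces numbers $d_n\in\underline{N^{n+2}}$, $q_n\in\underline{N^{n+1}}$ with $d_n\geq q_n$ and a finite family $(\psi_{n,l})_{l\in\underline{d_n-q_n}}\subset W_n$ such that
\begin{equation*}
\{\T^k\psi_{n,l}:k\in\Z,\,l\in\underline{d_n-q_n}\}
\end{equation*}
is an orthonormal basis of $W_n$. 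Taking the union over $n\in\N_0$ and adjoining the orthonormal basis of $V_0$ gives the asserted orthonormal basis of $L^2(\mu)$.

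The genuinely new content of the corollary, compared to the theorem, is essentially bookkeeping: one must check that the argument producing $\psi_{n,l}$ in the two-sided proof genuinely depends only on the positive-index data, so that removing the $\U^{(-n)}$ half of the structure does not cause any step to collapse, and one must verify that $V_0$ is still handled correctly when there is no $V_{-1}\subset V_0$ to split off. I expect the main (and only real) obstacle to be the latter point: in the two-sided version the basis of $V_0$ is obtained inductively from the wavelets $\psi_{-n,l}$, whereas here it has to be read off directly from condition (c) with $n=0$; once this observation is in place, the remainder is a transcription of the proof of Theorem \ref{thm:MRA} restricted to $n\geq 0$.
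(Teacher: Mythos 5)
Your proposal is correct and follows essentially the same route as the paper: the authors prove the corollary by reusing the first (non-negative index) part of the proof of Theorem \ref{thm:MRA}, adjoining $\left\{ \T^{k}\varphi_{i}\right\}$ as the basis of $V_{0}$ via the one-sided condition at $n=0$, and noting that orthogonality of the $\psi_{n,l}$ to the father wavelets is automatic since $W_{n}=V_{n+1}\ominus V_{n}\perp V_{0}$. Your observation that the wavelet construction for $W_{n}$, $n\geq0$, only consumes positive-index data is exactly the point the paper's remark relies on.
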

The construction for a MIM with an underlying Markov measure $\nu$
belongs to a specific class. In this class the scaling operators $\U^{(n)}$
can be represented multiplicatively. In our general framework we say
that $\left(\mu,\left(\U^{(n)}\right)_{n\in\N_{0}},\T\right)$ is
multiplicative if there exists a linear, bounded operator $\U$ on
$L^{2}\left(\mu\right)$ such that for $n\in\N_{0}$ $\U^{(n)}=\U^{n}$
and $\U^{(-n)}=\left(\U^{*}\right)^{n}$. The results concerning the
mother wavelets simplify in this case as a consequence of the following
lemma.
\begin{lem}
\label{lem:Wn} Let us assume that $\left(\mu,\left(\U^{(n)}\right)_{n\in\N_{0}},\T\right)$
allows a two-sided MRA with the closed subspaces $V_{n}$ of $L^{2}(\mu)$
from Definition \ref{def:MRA allgemein} (\ref{enu:DefMRA4-1}) and
set $W_{n}:=V_{n+1}\ominus V_{n}$, $n\in\Z$. 
\begin{itemize}
\item If $\U^{(n)}=\U^{n}$ for $n\in\N$ then $W_{n}=\U^{n}W_{0}$, $n\in\N$. 
\item If $\U^{(-n)}=\left(\U^{*}\right)^{n}$ for $n\in\N_{0}$ then $W_{-n}=\left(\U^{*}\right)^{n-1}W_{-1}$. 
\end{itemize}
\end{lem}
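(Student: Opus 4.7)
The plan is to proceed by induction on $n \ge 1$ for the first bullet, with inductive step $\U W_{n-1} = W_{n}$; the second bullet will then follow by the entirely analogous argument, replacing $\U$ with $\U^{*}$, positive with negative indices, and the commutation $\T\U^{(n)} = \U^{(n)}\T^{N^{n}}$ with $\U^{(-n)}\T = \T^{N^{n}}\U^{(-n)}$, both of which are available on $V_{0}$ by Definition~\ref{def:MRA allgemein}(\ref{enu:DefMRA6-1}).

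Under multiplicativity $\U^{(n)} = \U^{n}$, Definition~\ref{def:MRA allgemein}(\ref{enu:DefMRA4-1}) says that $\U^{n}$ sends the ONB $\{\T^{k}\varphi_{j}:k\in\Z,\,j\in\underline{N}\}$ of $V_{0}$ to the ONB of $V_{n}$ (modulo vectors sent to zero), so $\U^{n}|_{V_{0}}\colon V_{0}\to V_{n}$ is a partial isometry whose image is all of $V_{n}$. In particular $\U V_{n} = V_{n+1}$ for every $n\ge 0$. Writing $V_{n} = V_{n-1}\oplus W_{n-1}$ and applying $\U$ yields $V_{n+1} = \U V_{n} = \U V_{n-1} + \U W_{n-1} = V_{n} + \U W_{n-1}$.

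The heart of the proof is to promote this to an orthogonal decomposition by showing $\U W_{n-1}\perp V_{n}$. For $w\in W_{n-1}$ and $v\in V_{n}$, the surjectivity of $\U|_{V_{n-1}}$ onto $V_{n}$ lets me choose $u\in V_{n-1}$ with $v=\U u$, reducing the task to $\langle\U w,\U u\rangle = 0$. I expand $w$ in the ONB $\{\U^{n}\T^{k}\varphi_{j}\}$ of $V_{n}$ and $u$ in the ONB $\{\U^{n-1}\T^{k}\varphi_{j}\}$ of $V_{n-1}$, and use the base decomposition $\varphi_{i}=\sum_{r,s\in\underline{N}}\alpha^{(i)}_{r,s}\,\U\T^{r}\varphi_{s}$ (from Definition~\ref{def:MRA allgemein}(\ref{enu:-DefMRA5-1}) at $n=0$) together with the covariance $\T^{a}\U=\U\T^{aN}$ on $V_{0}$ (from Definition~\ref{def:MRA allgemein}(\ref{enu:DefMRA6-1})) to express both $\U u$ and $\U w$ in the ONB of $V_{n+1}$. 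The orthogonality $w\perp V_{n-1}$ translates, via the same matrix $\alpha$, into a family of linear relations among the ONB-coefficients of $w$; a careful bookkeeping shows that those relations are precisely what is needed to make the inner product in $V_{n+1}$ collapse to zero. Combined with $V_{n+1} = V_{n}+\U W_{n-1}$ and $V_{n+1} = V_{n}\oplus W_{n}$, this forces $\U W_{n-1} = W_{n}$, and the induction delivers $W_{n} = \U^{n}W_{0}$.

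The main obstacle is precisely this last bookkeeping. The index sets $J_{m}:=\{(k,j):\U^{m}\T^{k}\varphi_{j}\ne 0\}$ are nested, $J_{n+1}\subseteq J_{n}$, but generically $J_{n+1}\subsetneq J_{n}$, so the ONBs of $V_{n}$ and $V_{n+1}$ are parameterized by genuinely different subsets of $\Z\times\underline{N}$ and the change of basis induced by $\U$ is not an isometry on all of $V_{n}$. The unitarity of $\T$ and the ONB property force each $J_{m}$ to be invariant under $(k,j)\mapsto(k+N^{m},j)$, which decomposes the claim into translation-equivalent finite-dimensional blocks; the linear-algebra verification within a single block, using the orthonormality of the rows of $\alpha$ inherited from $\langle\varphi_{i},\varphi_{i'}\rangle=\delta_{i,i'}$, is where $\langle\U w,\U u\rangle = 0$ finally becomes apparent.
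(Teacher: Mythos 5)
Your reduction steps are sound: under $\U^{(n)}=\U^{n}$, Definition \ref{def:MRA allgemein} (\ref{enu:DefMRA4-1}) does make $\U^{n}|_{V_{0}}$ a partial isometry with closed range $V_{n}$, so $\U V_{n}=V_{n+1}$ and $V_{n+1}=V_{n}+\U W_{n-1}$, and you correctly locate the whole difficulty in the claim $\U W_{n-1}\perp V_{n}$. But that claim is exactly where the proposal stops being a proof. Write $w=\sum_{(k,j)}\beta_{k,j}\U^{n}\T^{k}\varphi_{j}$ and $\varphi_{i}=\sum_{s,t\in\underline{N}}\alpha^{(i)}_{s,t}\U\T^{s}\varphi_{t}$; using the covariance from (\ref{enu:DefMRA6-1}), the hypothesis $w\perp V_{n-1}$ yields, for each $(r,i)$, the relation $\sum\beta_{Nr+s,t}\overline{\alpha^{(i)}_{s,t}}=0$ with the sum over those $(s,t)$ for which $\U^{n}\T^{Nr+s}\varphi_{t}\neq0$, whereas the target $\langle\U w,\U^{n}\T^{r}\varphi_{i}\rangle=0$ is the same sum restricted to the strictly smaller set where $\U^{n+1}\T^{Nr+s}\varphi_{t}\neq0$. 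Because, as you yourself note, $J_{n+1}\subsetneq J_{n}$ in general, these are genuinely different linear relations, and the tool you name to bridge them --- the row-orthonormality $\sum_{s,t}\alpha^{(i)}_{s,t}\overline{\alpha^{(i')}_{s,t}}=\delta_{i,i'}$ coming from $\langle\varphi_{i},\varphi_{i'}\rangle=\delta_{i,i'}$ --- says nothing about the discrepancy terms. So ``those relations are precisely what is needed'' is false as stated, and the ``careful bookkeeping'' is the entire content of the lemma, left unproved.

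The gap is closable, but by a different observation: if $\U^{n}\T^{r}\varphi_{i}\neq0$ it is a \emph{unit} vector by (\ref{enu:DefMRA4-1}); expanding it as $\sum_{s,t}\alpha^{(i)}_{s,t}\U^{n+1}\T^{Nr+s}\varphi_{t}$ and comparing $\|\U^{n}\T^{r}\varphi_{i}\|^{2}=\sum_{(s,t):\,\U^{n+1}\T^{Nr+s}\varphi_{t}\neq0}|\alpha^{(i)}_{s,t}|^{2}$ with $\|\varphi_{i}\|^{2}=\sum_{s,t}|\alpha^{(i)}_{s,t}|^{2}=1$ (the latter using the vanishing convention in (\ref{eq:onesided-1})) forces $\alpha^{(i)}_{s,t}=0$ exactly on the discrepancy set, which converts the inherited relations into the target ones; the case $\U^{n}\T^{r}\varphi_{i}=0$ is vacuous. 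You would also need to justify that $\U W_{n-1}$ is closed before identifying it with the orthogonal complement $W_{n}$. For comparison, the paper sidesteps the issue entirely by working with the explicit generators: it takes the mother wavelets $\psi_{0,k}$ built in the proof of Theorem \ref{thm:MRA}, computes $\langle\U^{n}\T^{m}\psi_{0,k}|\U^{n}\T^{r}\varphi_{i}\rangle$ from the coefficient matrices, and then reconstructs each $\U^{n+1}\T^{m}\varphi_{j}$ from $V_{n}$ and the translates of $\U^{n}\psi_{0,l}$. Your more operator-theoretic route is viable and arguably cleaner, but only once the norm argument above is supplied.
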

Thus, we only have to find appropriate mother wavelets for $W_{0}$
and $W_{-1}$ and obtain a wavelet basis then by applying repeatedly
$\U$. More precisely, this observation allows us to derive the following
corollary from the Theorem \ref{thm:MRA}.
\begin{cor}
\label{cor:basis U}If $\left(\mu,\left(\U^{(n)}\right)_{n\in\N_{0}},\T\right)$
is multiplicative, then there exists an orthonormal basis of $L^{2}(\mu)$
of the form \begin{align*}
 & \left\{ \U^{n}\T^{k}\psi_{l}:n\in\N_{0},\omega\in\Sigma_{A}^{n},k=\sum_{i=0}^{n-1}\omega_{i}N^{i}+N^{n}m,m\in\Z,l\in\underline{d_{1}-N}\right\} \\
 & \qquad\qquad\qquad\qquad\qquad\qquad\cup\left\{ \left(\U^{*}\right)^{n}\T^{k}\psi_{-,l}:n\in\N_{0},k\in\Z,l\in\underline{d_{-1}-N}\right\} ,\end{align*}
where the functions $\psi_{l}$, $l\in\underline{d_{1}-N}$, and $\psi_{-,l}$,
$l\in\underline{d_{-1}-N}$, are given explicitly in Remark \ref{rem:motherwaveletsmulitplicative}.
\end{cor}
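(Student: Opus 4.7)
The plan is to combine Theorem~\ref{thm:MRA} with Lemma~\ref{lem:Wn} so that the doubly-indexed family $(\psi_{n,l})_{n\in\Z,\,l}$ produced abstractly collapses in the multiplicative case to two finite families, one seated in $W_0=V_1\ominus V_0$ and one seated in $W_{-1}=V_0\ominus V_{-1}$.

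First I would invoke Theorem~\ref{thm:MRA}: for each $n\in\Z$ it yields mother wavelets $(\psi_{n,l})_l$ such that $\{\T^k\psi_{n,l}\}_{k\in\Z,\,l}$ is an orthonormal basis of $W_n$ for $n\geq 0$, and $\{\T^{N^{|n|}k}\psi_{n,l}\}_{k\in\Z,\,l}$ is one for $n<0$. Define
\[
\psi_l:=\psi_{0,l},\qquad \psi_{-,l}:=\psi_{-1,l}.
\]
Lemma~\ref{lem:Wn}, applied under the multiplicativity assumption, gives $W_n=\U^n W_0$ for $n\in\N$ and $W_{-n}=(\U^{*})^{n-1}W_{-1}$ for $n\in\N$. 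Thus it suffices to verify that $\U^n$ sends the chosen orthonormal basis of $W_0$ to an orthonormal basis of $W_n$, and analogously that $(\U^{*})^{n-1}$ transports an orthonormal basis of $W_{-1}$ to one of $W_{-n}$, and then to reparametrise the $\T$-indices into the form displayed in the statement.

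The key technical step is the re-indexing. Using Proposition~\ref{pro:The-operatorsUT-1}(\ref{enu:(1)TU-1}) one has $\T^{N^n m}\U^n\psi_l=\U^n\T^m\psi_l$ for every $m\in\Z$, so the $\T$-orbit $\{\T^j\U^n\psi_l:j\in\Z\}$ decomposes, according to residues $j\equiv r\pmod{N^n}$ with $0\le r<N^n$, as the disjoint union $\bigsqcup_{r}\T^{r}\{\U^n\T^{m}\psi_l:m\in\Z\}$. Writing $r=\sum_{i=0}^{n-1}\omega_i N^i$ in base $N$ exhibits exactly the shape of $k$ appearing in the statement. The admissibility constraint $\omega\in\Sigma_A^n$ then singles out precisely those residues for which $\U^n\T^k\psi_l$ does not vanish; the remaining ones are zero by the support pattern built into~(\ref{eq:Def U^n}) and by Proposition~\ref{pro:The-operatorsUT-1}(\ref{enu:(6)ZU-1}), which guarantees the $\psi_{n,l}=\U^n\psi_l$ identification.

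The $W_{-n}$-side is entirely parallel via Proposition~\ref{pro:The-operatorsUT-1}(\ref{enu:(2)Znt-1}): for $\psi_{-,l}\in V_0$ one finds $(\U^{*})^n\T\psi_{-,l}=\T^{N^n}(\U^{*})^n\psi_{-,l}$, so the $\T$-orbit of $(\U^{*})^{n-1}\psi_{-,l}$ is automatically spaced by $N^n$, matching the $\T^{N^n k}\psi_{-n,l}$ indexing of Theorem~\ref{thm:MRA}. Finally, the orthogonal sum decomposition $L^2(\mu)=\bigoplus_{n\in\Z}W_n$, which follows from conditions (\ref{enu:DefMRA1-1})--(\ref{enu:DefMRA3-1}) of Definition~\ref{def:MRA allgemein}, assembles the local bases into a global orthonormal basis of $L^2(\mu)$ in the claimed form. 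The main obstacle I anticipate is the combinatorial bookkeeping: checking that the family of residues $\sum_{i=0}^{n-1}\omega_i N^i$ permitted by $\Sigma_A^n$ matches, without omission or duplication, the indices for which $\U^n\T^k\psi_l\neq 0$, so that each orthonormal basis vector of $W_n$ is hit exactly once by the listed family.
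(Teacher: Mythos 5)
Your overall strategy---obtain the corollary from Theorem~\ref{thm:MRA} by using Lemma~\ref{lem:Wn} to identify $W_{n}=\U^{n}W_{0}$ and $W_{-n}=\left(\U^{*}\right)^{n-1}W_{-1}$, set $\psi_{l}:=\psi_{0,l}$ and $\psi_{-,l}:=\psi_{-1,l}$, and then re-index the translates---is exactly the route the paper takes; the paper gives no separate proof beyond this observation, so in spirit your proposal matches it. The negative-index side and the final assembly via $L^{2}(\mu)=\bigoplus_{n\in\Z}W_{n}$ are also handled as in the paper.

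However, your ``key technical step'' is stated backwards, and as written it does not give the re-indexing you need. From $\T\U^{(n)}=\U^{(n)}\T^{N^{n}}$ one obtains $\T^{m}\U^{n}=\U^{n}\T^{N^{n}m}$, \emph{not} $\T^{N^{n}m}\U^{n}=\U^{n}\T^{m}$ (substituting $j=N^{n}m$ into the correct relation gives $\T^{N^{n}m}\U^{n}=\U^{n}\T^{N^{2n}m}$). Consequently the $\T$-orbit $\left\{ \T^{j}\U^{n}\psi_{l}:j\in\Z\right\} $ does not decompose into $N^{n}$ residue classes; it equals the single class $\left\{ \U^{n}\T^{N^{n}j}\psi_{l}:j\in\Z\right\} $ inside the larger family of the corollary. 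The correct bookkeeping runs in the opposite direction: the family $\left\{ \U^{n}\T^{k}\psi_{l}:k\in\Z\right\} $ splits, according to $k=r+N^{n}m$ with $0\le r<N^{n}$, into the $\T$-orbits $\left\{ \T^{m}\left(\U^{n}\T^{r}\psi_{l}\right):m\in\Z\right\} $ of the residue representatives $\U^{n}\T^{r}\psi_{l}$. It is these representatives, for the admissible residues $r=\sum_{i=0}^{n-1}\omega_{i}N^{i}$ with $\omega\in\Sigma_{A}^{n}$ (the others vanish), that play the role of the level-$n$ mother wavelets $\psi_{n,\cdot}$ of Theorem~\ref{thm:MRA}; this is also what reconciles the count $d_{n}-q_{n}$, which grows with $n$, with the fixed number $d_{1}-N$ of functions $\psi_{l}$ in the corollary. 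Once this inversion is repaired, the remaining ingredients you cite---orthonormality of the transported vectors (which is the inner-product computation already carried out in the proof of Lemma~\ref{lem:Wn}), the identification of the vanishing indices with non-admissible words, and the orthogonal decomposition of $L^{2}(\mu)$---do go through and reproduce the paper's derivation.
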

The above corollary applied to Example \ref{exa:-Transformation}
leads to the following construction.
\begin{example*}
[Example \ref{exa:-Transformation} (continued)]

The mother wavelet is\begin{align*}
\psi & =\left(\sqrt{5}(2-\beta)\right)^{1/2}\mathbbm{1}_{[0,(\beta-1)^{2})}-\left(\sqrt{5}\right)^{1/2}\mathbbm{1}_{[(\beta-1)^{2},\beta-1)}\end{align*}
and so a basis is given by \[
\left\{ T^{k}\varphi_{1}:\, k\in2\Z+1\right\} \cup\left\{ U^{n}T^{k}\psi:\, k\in D_{n},\, n\in\N\right\} \cup\left\{ \left(U^{*}\right)^{n}T^{k}\psi:\, k\in\Z,\, n\in\N\right\} ,\]
where \[
D_{n}:=\left\{ \sum_{j=0}^{n-1}k_{j}2^{j}+2^{n}l:\left(k_{j}\right)_{j\in\underline{n}}\in\left\{ 0,1\right\} ^{n},k_{j}\cdot k_{j-1}=0,j\in\underline{n-1},\, l\in\Z\right\} .\]
The proof that this indeed defines a orthonormal basis will be postponed
to Section \ref{sub:Examples}.
\end{example*}
In the case of a MRA for a MIM with Markov measure $\nu$ we have
in particular that $U^{(n)}=U^{n}$ and $U^{(-n)}=\left(U^{*}\right)^{n}$and
we even obtain a stronger correspondence between Markov measures for
MIM and a two-sided MRA. 
\begin{thm}
\label{thm:MRA-1} We have that $\left(\nu_{\Z},\left(U^{(n)}\right)_{n\in\Z},T\right)$
allows a two-sided MRA with the father wavelets $\varphi_{i}:=\left(\nu_{\Z}([i])\right)^{-1/2}\mathbbm{1}_{[i]}$,
$i\in\underline{N}$, if and only if the measure $\nu$ is Markovian. 
\end{thm}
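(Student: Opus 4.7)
I split the biconditional into its two directions.

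For the forward direction ($\nu$ Markovian $\Rightarrow$ two-sided MRA), my plan is to leverage Theorem~\ref{thm:MRa} (which supplies the one-sided MRA for arbitrary non-atomic $\nu$) together with the multiplicative identities $U^{(n)}=U^{n}$ and $U^{(-n)}=(U^{*})^{n}$ that hold under the Markov hypothesis. Conditions (b), (d), (e), and (f) of Definition~\ref{def:MRA allgemein}(1) then follow quickly from Propositions~\ref{pro:eigenschaften UT-1} and~\ref{pro:The-operatorsUT-1}: orthonormality from~(\ref{enu:(4)TU-1}); refinement and commutation from~(\ref{enu:(3)TU-1-1}),~(\ref{enu:(1)TU-1}),~(\ref{enu:(2)Znt-1}); density of $\bigcup_{j}V_{j}$ from Theorem~\ref{thm:MRa}. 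What remains is the negative half of the nesting~(a), $V_{-n-1}\subset V_{-n}$, and the intersection~(c), $\bigcap_{j}V_{j}=\{0\}$. For the nesting I compute $U^{(-n-1)}T^{k'}\varphi_{j'}$ explicitly and use the Markov factorisation $\nu([j'\omega_{1}w])/\nu([\omega_{1}w])=\nu([j'\omega_{1}])/\nu([\omega_{1}])$ to rewrite it as a linear combination of $\{U^{(-n)}T^{\ell}\varphi_{i}\}$. For~(c) I invoke the standard MRA approximation argument: approximate $f\in\bigcap_{j}V_{j}$ by a compactly supported $g$, observe that each element of the orthonormal basis of $V_{-n}$ has support of length $\sim N^{n-1}$ with amplitude $\sim N^{-(n-1)/2}$, so that $\|P_{V_{-n}}g\|\to 0$ as $n\to\infty$, and conclude $f=0$.

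For the reverse direction, two-sided MRA $\Rightarrow$ $\nu$ Markovian, I exploit the negative nesting. The key observation, coming from $\mathrm{Im}\,\tau_{\omega}\subset[\omega_{0}]$, is that inside the defining sum~(\ref{eq:Def U^-n}) for $U^{(-n)}T^{k'}\varphi_{j'}$ the indices collapse to $\omega_{0}=j'$ and $k=k'$. This yields the closed form
\[
U^{(-n-1)}T^{k'}\varphi_{j'}=\sum_{\omega_{1},\ldots,\omega_{n},\,j}\sqrt{\frac{\nu_{\Z}([j'\omega_{1}\cdots\omega_{n}j])}{\nu_{\Z}([j'])}}\,T^{\omega_{n}+N\omega_{n-1}+\cdots+N^{n}j'+N^{n+1}k'}\varphi_{j},
\]
with an analogous expansion for $U^{(-n)}T^{\ell}\varphi_{i}$. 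Assuming $V_{-n-1}\subset V_{-n}$, shift-matching across the two sides pins down $\ell=j'+Nk'$ and $i=\omega_{1}$, and equating coefficients of each basis element $T^{m}\varphi_{j}$ forces
\[
c_{\omega_{1}}=\sqrt{\frac{\nu_{\Z}([j'\omega_{1}\cdots\omega_{n}j])\,\nu_{\Z}([\omega_{1}])}{\nu_{\Z}([j'])\,\nu_{\Z}([\omega_{1}\cdots\omega_{n}j])}}.
\]
Since $c_{\omega_{1}}$ must be independent of the tail $(\omega_{2},\ldots,\omega_{n},j)$, the ratio $\nu_{\Z}([j'\omega_{1}\cdots\omega_{n}j])/\nu_{\Z}([\omega_{1}\cdots\omega_{n}j])$ depends only on the leading pair $(j',\omega_{1})$. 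Letting $n$ range over $\N$ delivers the full Markov property of $\nu$, with transition probabilities read off from this common ratio.

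The hard part will be the backward direction: the combinatorial derivation of $U^{(-n-1)}T^{k'}\varphi_{j'}$ in closed form and the careful tracking of which indices survive (in particular, that $\omega_{0}=j'$ and $k=k'$ are forced by the range of $\tau_{\omega}$) is where the real subtlety lies. Once that formula is in hand, the coefficient-matching argument is linear-algebraic and the Markov property reads off directly. The forward direction is then essentially a verification, the only non-formal ingredient being the approximation argument for~(c), which is adapted from the classical MRA setting with $N^{n}$-sized cylinder partitions playing the role of dyadic scale.
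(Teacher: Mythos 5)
Your proposal follows essentially the same route as the paper in both directions: the implication from a two-sided MRA to the Markov property is obtained, exactly as in the paper, by writing out $U^{(-n)}T^{k}\varphi_{j}$ explicitly, observing that matching the translation indices forces the expansion over $U^{(-n+1)}T^{\ell}\varphi_{i}$ to collapse to a single $\ell$ and to $i=\omega_{1}$, and then comparing coefficients to conclude that $\nu_{\Z}([j\omega])/\nu_{\Z}([\omega])$ depends only on the leading pair; the converse is, as in the paper, a verification of the axioms using Theorem \ref{thm:MRa} for the nonnegative scales together with the identity $\left(U^{*}\right)^{n}\varphi_{k}=\sum_{j}\sqrt{\pi_{kj}}\left(U^{*}\right)^{n-1}T^{k}\varphi_{j}$ for the negative ones. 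The one place you diverge is condition (c), $\bigcap_{j}V_{j}=\{0\}$: the paper argues via the growth of $\supp\left(\left(U^{*}\right)^{n}\varphi_{j}\right)$, while you use the classical vanishing-projection argument $\|P_{V_{-n}}g\|\to0$ for compactly supported $g$; both are workable, but your amplitude estimate $N^{-(n-1)/2}$ is only literally correct in the uniform self-similar case --- for a general Markov measure the coefficient of $\left(U^{*}\right)^{n}\varphi_{j}$ on the piece $[i]+m(\omega)$ is $\sqrt{\nu_{\Z}([\omega i])/\left(\nu_{\Z}([j])\nu_{\Z}([i])\right)}$, and the decay must instead be extracted from $\sum_{\omega,i}\nu_{\Z}([\omega i])=\nu_{\Z}([j])$ combined with the fact that only boundedly many translates $m(\omega)+N^{n}k$ meet the support of $g$. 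You also gloss over the check that the ratio $\nu_{\Z}([j\omega])/\nu_{\Z}([\omega])$ obtained at level $n$ is consistent across different $n$ (the paper settles this by summing over the last letter), but that is a one-line addition.
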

In the case of $\nu$ being a Markov measure we even have a stronger
property appart from being multiplicative, that is we have $\varphi_{j}\in\spn U\left\{ T^{j}\varphi_{i}:i\in\underline{N}\right\} $
for each $j\in\underline{N}$. We call a MRA with this property \emph{translation
complete}. We further investigate multiplicative MRA which are translation
complete in Section \ref{sub:Multiresolution-analysis-for}. In this
situation we derive a 0-1-valued transition matrix $A$ given by $A_{ij}=0$
if and only if $\U\T^{i}\varphi_{j}=0$ and show that for MIM the
matix coincides with the incidence matrix. This observation is used
to construct the mother wavelets in a simpler way by considering for
each father wavelet a unitary matrix to obtain coefficients for the
corresponding mother wavelets. We will use this approach to construct
the mother wavelets for MIM. 

We would like to point out some interesting connections to $C^{*}$-algebras
of Cuntz-Krieger type, \cite{KeStaStr07}. We start by further considering
the scaling operator $U$ for the MRA in the setting of a MIM with
the incidence matrix $A$ and Markov measure $\nu$. We can also write
the operator $U$ in a different way using the representation of a
Cuntz-Krieger-algebra. For this we consider the partial isometries
$S_{i}$ given for $i\in\underline{N}$, $f\in L^{2}(\nu)$, $x\in\supp(\nu)$
by \[
S_{i}f(x)=\left(\nu([i])\right)^{-1/2}\mathbbm{1}_{[i]}(x)f(\tau_{i}^{-1}(x)).\]
It has been shown in \cite{KeStaStr07} that this gives a representation
of the Cuntz-Krieger-algebra $\mathcal{O}_{A}$ by bounded operators
acting on $L^{2}(\nu)$, that is the $S_{i}$, $i\in\underline{N}$,
are partial isometries and satisfy \begin{alignat*}{1}
S_{i}^{*}S_{i} & =\sum_{j\in\underline{N}}A_{ij}S_{j}S_{j}^{*},\\
1 & =\sum_{i\in\underline{N}}S_{i}S_{i}^{*}.\end{alignat*}
The scaling operator $U$ acting on $L^{2}\left(\nu_{\Z}\right)$
can then alternatively be written in terms of the partial isometries
as \[
U=\sum_{k\in\Z}\sum_{j\in\underline{N}}\sum_{i\in\underline{N}}\sqrt{\frac{p_{i}}{\pi_{ji}}}T^{k}S_{j}\mathbbm{1}_{[i]}T^{-(j+Nk)},\]
where we notice that $S_{j}\1_{[i]}$, $j,i\in\underline{N}$, acts
on $L^{2}\left(\nu_{\Z}\right)$. We can also write $U^{*}$ in terms
of the partial isometries $S_{i}$, $i\in\underline{N}$. In this
way we obtain \[
U^{*}=\sum_{k\in\Z}\sum_{j\in\underline{N}}\sum_{i\in\underline{N}}\sqrt{\frac{\pi_{ji}}{p_{i}}}T^{j+Nk}\mathbbm{1}_{[i]}S_{j}^{*}T^{-k}.\]
 The spaces $V_{n}$, $n\in\N_{0}$, can also be written in terms
of the isometries $S_{i}$, $i\in\underline{N}$, that is for $n\in\N$
a basis of $V_{n}$ is given by

\[
\left\{ \sqrt{\frac{p_{i}}{\pi_{\omega_{n-1}i}}}T^{l}S_{\omega}\varphi_{i}:\, l\in\Z,\omega\in\Sigma_{A}^{n},\, i\in\underline{N}\right\} .\]

Let us finish this section by commenting on some known results in
the literature connected to the results in here. Up to our knowledge
there are at least two further approaches to construct a wavelet basis
on the limit sets of MIM, namely \cite{MaPa09,KeSa10} and there is
one approach for the specific case of a $\beta$-transformation given
in \cite{GP96}. In \cite{MaPa09} Marcolli and Paolucci consider
the limit set $X$ of a MIM inside the unit interval consisting of
the inverse branches $\tau_{i}(x)=\frac{x+i}{N}$ for $i\in\underline{N}$
with some transition rule encoded in a matrix $A$. This limit set
can be associated with a Cantor set inside the unit interval. The
Cantor set is then equipped with the Hausdorff measure of the appropriate
dimension $\delta$. If all transitions were allowed, the limit set
would coincide with a usual Cantor set given by an affine iterated
function system. They then use the representation of the Cuntz-Krieger-algebra
$\mathcal{O}_{A}$, where $A$ is the transition matrix, for the construction
of the orthonormal system of wavelets on $L^{2}\left(H^{\delta}|_{X}\right)$
and not a multiresolution analysis. Their proofs mainly rely on results
in \cite{Bod06,Jo98}. Finally, Marcolli and Paolucci give a possible
application where they adapt the construction of a wavelet basis to
graph wavelets for finite graphs with no sinks, which can be associated
to Cuntz-Krieger-algebras. These graph wavelets are a useful tool
for spatial network traffic analysis, compare \cite{MaPa09,CK03}. 

In \cite{KeSa10} the authors construct a Haar basis analogous to
the wavelet basis construction in \cite{DJ03} for the middle third
Cantor set for a one-sided topologically exact subshift of finite
type and with respect to a Gibbs measure $\mu_{\phi}$ for a Hölder
continuous potential $\phi$. The construction is then used to obtain
a spectral triple in the framework of non-commutative geometry.

The construction of wavelet basis in different spaces than $L^{2}(\lambda)$,
where $\lambda$ is the Lebesgue measure on $\R$, may lead to a further
understanding of non-commutative geometry in the sense that we can
obtain a {}``Fourier'' or wavelet basis for quasi lattices or quasi
crystals. 

As an essential non-linear example for the construction of a wavelet
basis on limit sets of MIM one can take the limit set of a Kleinian
group together with the measure of maximal entropy or the Patterson-Sullivan
measure, compare Example \ref{exa:cocompact}.

As an example we apply the construction to a $\beta$-transformation,
where $\beta$ denotes the golden mean, i.e. $\beta=\frac{1+\sqrt{5}}{2}$,
compare Example \ref{exa:-Transformation}. In this way we obtain
a wavelet basis for $L^{2}\left(\nu_{\Z}\right),$ where $\nu$ is
the invariant measure for this transformation, compare \cite{Re57,Pa60}.
This measure is absolutely continuous with respect to the Lebesgue
measure. In \cite{GP96}, Gazeau and Patera construct a similar basis
to ours for the $\beta$-transformation with respect to the Lebesgue
measure on $\R$. They use instead of a translation by the group $\Z$
a translation by so called $\beta$-integers which consider the $\beta$-adic
expansion and are obtained by a so-called greedy algorithm. There
are some common features between our construction and the one in \cite{GP96}
like both give characteristic functions on intervals depending on
powers of $\beta$. But since we consider different measures we have
different coefficients. 

The paper is organized as follows. In Section \ref{sec:Setting-(IFS-with}
we provide some basic definitions and introduce MIM. In Section \ref{sec:Multiresolution-analysis}
we elaborate the abstract MRA for families of operators $\left(\U^{(n)}\right)_{n\in\Z}$
and give a proof of Theorem \ref{thm:MRA}. In Section \ref{sub:Multiresolution-analysis-for}
we then consider the special case of multiplicative systems. In Section
\ref{sub:Translation-completeness} we prove how the condition of
translation completeness simplifies the construction of the mother
wavelets. The rest of this paper is devoted to the special case of
a MRA for MIM. In Section \ref{sec:Construction-of-a} we start with
a family of operators $\left(U^{(n)}\right)_{n\in\Z}$ acting on $L^{2}\left(\nu_{\Z}\right)$
for an arbitrary non-atomic probability measure $\nu$ on the limit
set of an MIM in the unit interval translated by $\Z$ and show that
a one-sided MRA is always satisfied. If on the other hand a two-sided
MRA holds, we then prove that the measure $\nu$ is necessarily Markovian.
The construction of the mother wavelets will be given explicitly.
In Section \ref{sub:MRA-for-Markov} we give an explicit construct
of the wavelet basis if the measure $\nu$ is Markovian. 

Finally, in Section \ref{sec:Operator-algebra} we show how low-pass
filters and high-pass filters can be employed to construct mother
wavelets for multiwavelets for MIM with an underlying Markov measure.

\section{\label{sec:Setting-(IFS-with}Markov Interval Maps}

In this section we give some basic definitions and notations. We consider
fractals given as limit sets of one-dimensional Markov Interval Maps. 
\begin{defn}
\label{def:MFS}Let $\left(B_{i}\right)_{i\in\underline{N}}$ be closed
intervals in $[0,1]$ with disjoint interior. Define $I:=\bigcup_{i\in\underline{N}}B_{i}$
and $F:I\to[0,1]$ exanding and $C^{1}$ on each $B_{i}$, $i\in\underline{N}$,
such that if $F(B_{i})\cap B_{j}\neq\emptyset$ then $B_{j}\subset F(B_{i})$
for $i,j\in\underline{N}$. We call the system $\left(\left(B_{i}\right)_{i\in\underline{N}},F\right)$
a Markov Interval Map and its limit set is defined as $X:=\bigcap_{n=0}^{\infty}F^{-n}I$.\end{defn}
\begin{rem}
$\ $
\begin{enumerate}
\item If $F(B_{i})=[0,1]$ for each $i\in\underline{N}$, then $\left(X,F\right)$
corresponds to an iterated function system (IFS). 
\item We define the inverse branches $\tau_{i}:=\left(F|_{B_{i}}\right)^{-1}$,
$i\in\underline{N}$. The family $\left(\tau_{i}\right)_{i\in\underline{N}}$
is called a one-dimensional graph directed Markov system (GDMS) with
the incidence matrix $A=\left(A_{ij}\right)_{i,j=0}^{N-1}$ which
is obtained by \[
A_{ij}:=\begin{cases}
1, & \text{if}\,\, B_{j}\subset F\left(B_{i}\right)\\
0, & \text{else},\end{cases}\]
 and it follows that $F\left(B_{i}\right)=\bigcup_{j\in\underline{N}:\, A_{ij}=1}B_{j}$. 
\end{enumerate}
\end{rem}
\begin{example}
\label{exa:cocompact}An example is a convex, co-compact Kleinian
group, as an example consider Figure \ref{fig:In-hyperbolic-space}.
The limit set can be considered as the limit set of the Bowen-Series
map, which gives rise to a Markov Interval Map, compare Figure \ref{fig:Bowen-Series-map}.
The limit set is the set that is obtained by successive application
of these four maps, where the composition of $g_{i}$ and $g_{i}^{-1}$are
forbidden. A typical measure to be studied would be the measure of
maximal entropy or the conformal measure (of maximal dimension).
\end{example}
\begin{figure}
\subfloat[\label{fig:In-hyperbolic-space}A fundamental domain of the action
of $\langle g,h\rangle$ on the Poincaré disc model.]{\includegraphics[width=0.47\textwidth]{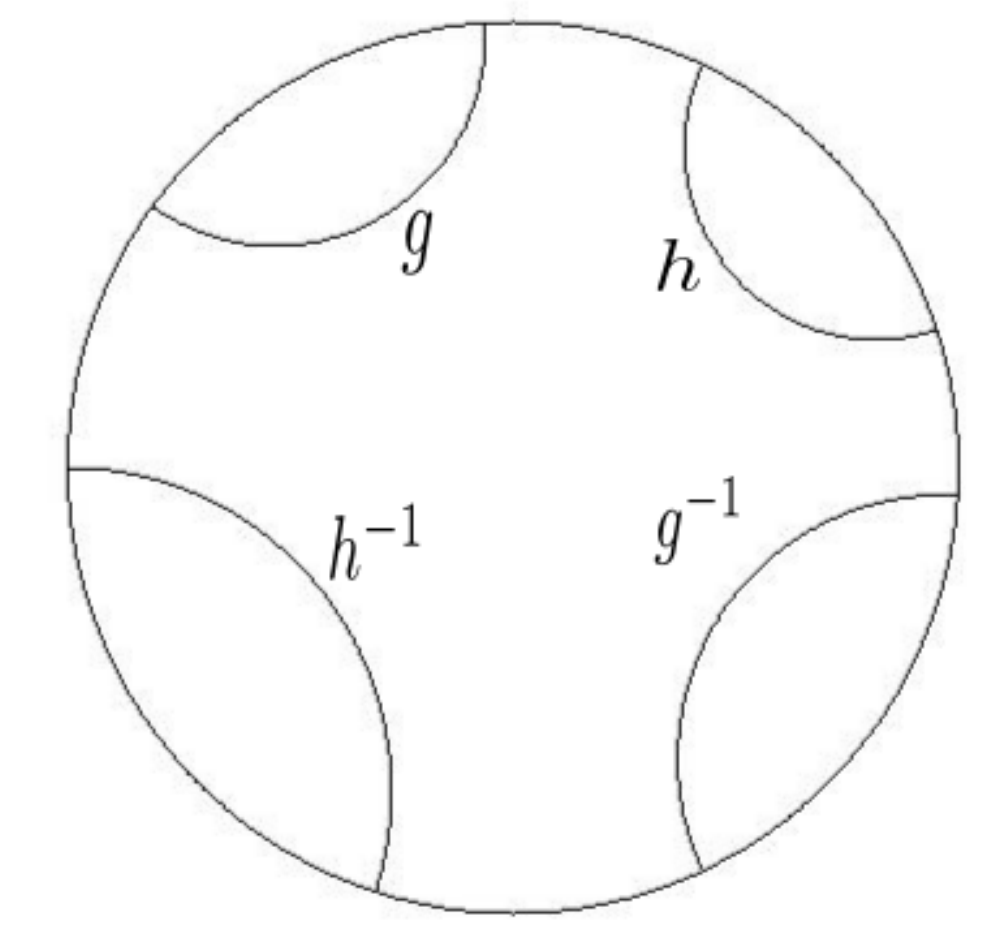}}\quad{} \subfloat[\label{fig:Bowen-Series-map}The corresponding Bowen-Series map.]{\includegraphics[width=0.42\textwidth]{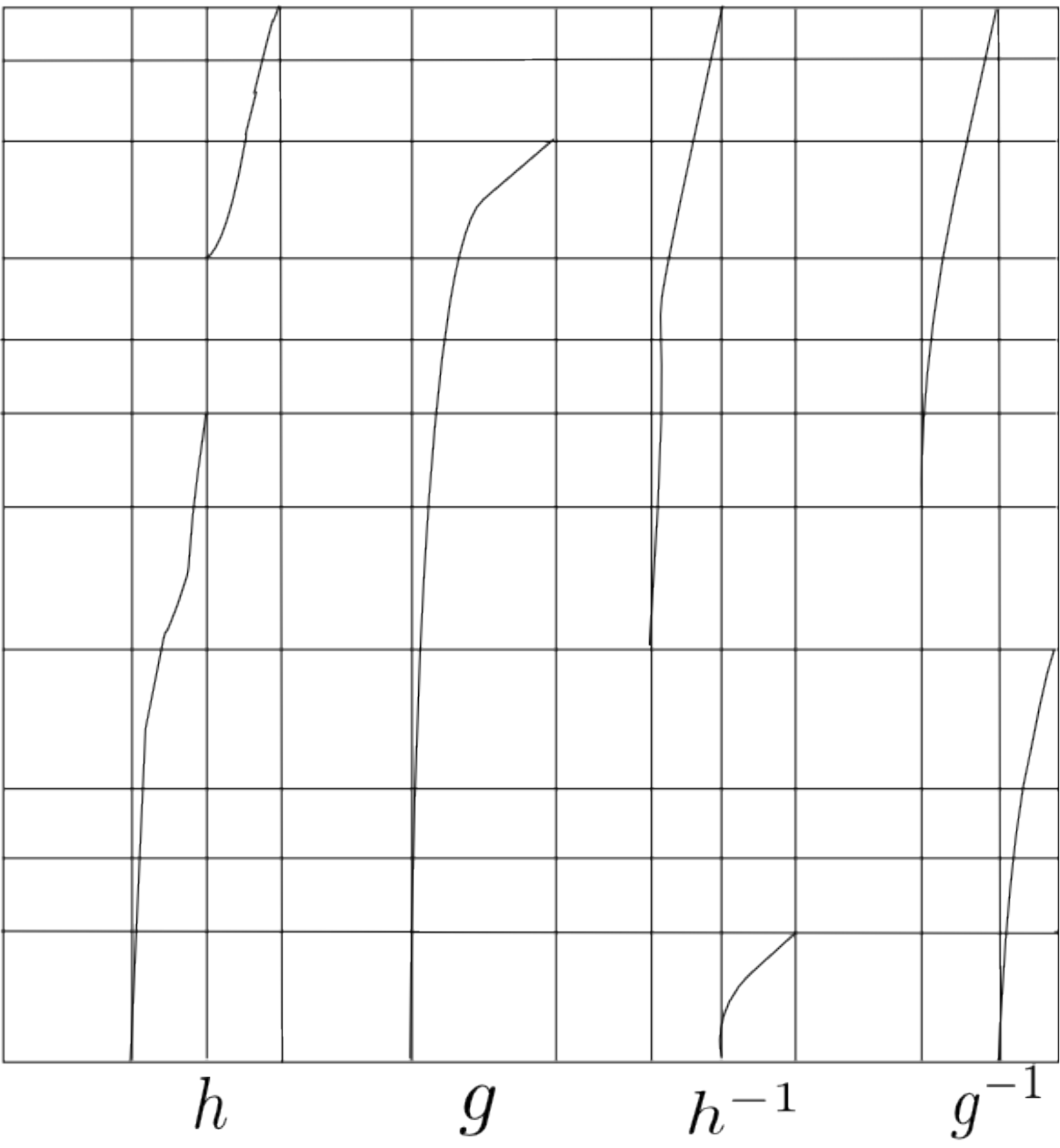}}\caption{Example of a Fuchsian group.}

\end{figure}

Next we consider the corresponding shift space. Consider the alphabet
$\underline{N}=\left\{ 0,\dots,N-1\right\} $. The limit set $X$
is then homeomorphic (mod $\nu$) to the set of all admissible words
\[
\Sigma_{A}:=\{\omega=(\omega_{0},\omega_{1},\dots)\in\underline{N}^{\N}:\, A_{\omega_{i}\omega_{i+1}}=1\,\forall i\geq0\}.\]
The homeomorphism is given, for $x\in X$, by\begin{equation}
\begin{array}{ll}
\pi: & \Sigma_{A}\rightarrow X\\
 & \omega\mapsto\lim_{n\rightarrow\infty}\tau_{\omega_{0}}\circ\dots\circ\tau_{\omega_{n}}(x),\end{array}\label{eq:codingmap}\end{equation}
which is independent of the particular choice of $x\in X$. 
\begin{rem}
Furthermore, we define the cylinder sets for $\omega_{0},\dots,\omega_{k}\in\underline{N}$,
$k\in\N_{0}$, as \[
\left[\omega_{0}\dots\omega_{k}\right]:=\left\{ \left(\omega_{0}',\omega_{1}',\dots\right)\in\Sigma_{A}:\,\omega_{i}=\omega_{i}',\, i\in\{0,\dots,k\}\right\} .\]
If for some $i\in\{0,\dots,k-1\}$ $A_{\omega_{i}\omega_{i+1}}=0$
then $\left[\omega_{0}\dots\omega_{k}\right]=\emptyset$. 

Then the sets $B_{i}$ and $F\left(B_{i}\right)$ for $i\in\underline{N}$
are homeomorphic (mod $\nu$) to the following sets in the shift space:
\[
\pi^{-1}\left(B_{i}\right)=[i]\]
and \[
\pi^{-1}\left(F(B_{i})\right)=\{\omega=\left(\omega_{0},\omega_{1},\dots\right)\in\Sigma_{A}:\, A_{i\omega_{0}}=1\}.\]
The dynamic of $F$ is conjugated to the shift dynamic $\sigma:\,\Sigma_{A}\rightarrow\Sigma_{A}$,
$\sigma\left(\omega_{0},\omega_{1},\dots\right)=\left(\omega_{1},\omega_{2},\dots\right)$
and consequently, the functions $\tau_{i}$ correspond to the inverse
branches of the shift function, i.e. $\tau_{i}\circ\pi\left(\omega_{0},\omega_{1},\dots\right)=\pi\left(i,\omega_{0},\omega_{1},\dots\right)$,
for $\omega\in\pi^{-1}\left(F(B_{i})\right)$, $i\in\underline{N}$.

Furthermore, let us fix the following notation.\end{rem}
\begin{itemize}
\item $\Sigma_{A}^{n}:=\left\{ \omega=\left(\omega_{0},\dots,\omega_{n-1}\right)\in\underline{N}^{n}:\text{ }A_{\omega_{i}\omega_{i+1}}=1\text{ for all }i\in\{0,\dots,n-1\}\right\} $
defines the set of admissible words of length $n\in\N$. 
\item $\Sigma_{A}^{*}$ stands for all finite words, i.e. $\Sigma_{A}^{*}=\bigcup_{n\geq1}\Sigma_{A}^{n}$. 
\item For $\omega\in\Sigma_{A}^{n}$ we define $\tau_{\omega}:=\tau_{\omega_{0}}\circ\tau_{\omega_{1}}\circ\dots\circ\tau_{\omega_{n-1}}$. 
\item For $\omega\in\Sigma_{A}^{n}$, $\tau\in\Sigma_{A}^{m}$ we define
their concatenation \[
\omega\tau:=\left(\omega_{0},\dots,\omega_{n-1},\tau_{0},\dots,\tau_{m-1}\right)\]
 which is an element of $\Sigma_{A}^{n+m}$ whenever $A_{\omega_{n-1}\tau_{0}}=1$. 
\end{itemize}
As a measure on $X$ we could consider for instance the pull-back
under $\pi$ of Gibbs measures on $\Sigma_{A}$ (for definitions see
e.g. \cite{KeSa10}).

Now we define the appropriate space for which we want to construct
a wavelet basis. 
\begin{defn}
\label{def:measures}Let $\widetilde{\nu}$ be a probability measure
on $\left(\Sigma_{A},\mathcal{B}\right)$ and $\nu=\widetilde{\nu}\circ\pi^{-1}$.
Define the \textit{enlarged fractal} by 

\[
R=\bigcup_{k\in\Z}X+k\]
 and define the $\Z$-convolution $\nu_{\Z}$ of the measure $\nu$
for a Borel set $B$ by \[
\nu_{\Z}(B)=\sum_{k\in\Z}\nu(B-k),\]
which clearly is an invariant measure under $\Z$-translation.\end{defn}
\begin{rem}
One example is the space $L^{2}\left(\Sigma_{A},\mu_{\phi}\right)$,
where $\Sigma_{A}$ denotes a one-sided topologically exact subshift
of finite type. An important class of measures on $\Sigma_{A}$ are
given by invariant Gibbs measure with respect to a Hölder continuous
potential $\phi\in C\left(\Sigma_{A},\R\right)$, denoted by $\mu_{\phi}$,
compare \cite{KeSa10}. $\mu_{\phi}$ corresponds to the measure $\widetilde{\nu}$
in Definition \ref{def:measures}.
\end{rem}
In the following we use the convention $0^{-1}\cdot\1_{\emptyset}=0$.
For simplicity we let $\left[\omega_{0},\dots,\omega_{n-1}\right]$
also denote the sets $\tau_{\omega_{0}}\circ\dots\circ\tau_{\omega_{n-1}}(X)$
using the identification by $\pi$. Furthermore, in Section \ref{sec:Construction-of-a}
the measure $\nu$ supported on $[0,1]$ always corresponds to a measure
$\widetilde{\nu}$ on $\Sigma_{A}$ by $\nu=\widetilde{\nu}\circ\pi^{-1}$
and $\nu_{\Z}$ denotes the measure obtained from $\nu$ by $\Z$-convolution.

\section{\label{sec:Multiresolution-analysis}Abstract Multiresolution analysis}

In this section we give a proof of Theorem \ref{thm:MRA}. To do so
we first construct mother wavelets explicitly and in the next step
we prove that these give indeed an orthonormal basis. In this section
we fix $\left(\mu,\left(\U^{(n)}\right)_{n\in\Z},\T\right)$ which
allows a two-sided MRA. 

For the construction of an ONB we cannot define the mother wavelets
in terms of filter functions due to the fact that we have more than
one father wavelet. 
\begin{rem}
\label{rem:mother wavelets}If we have the usual setting from the
literature, compare e.g. \cite{Dau92}, then we have a multiplicative
MRA with a unitary operator $\U$ and the operator $T$ given in (\ref{eq:Def T}).
In this case there is only one father wavelet $\varphi$ and there
exists a so-called low-pass filter $m_{0}:\,\mathbb{T}\rightarrow\mathbb{T}$
of the form $m_{0}(z)=\sum_{k\in\Z}a_{k}z^{k}$, $a_{k}\in\C$, such
that $\varphi=\U m_{0}(T)\varphi$. For the construction of the mother
wavelets we look for $N-1$ high-pass filters $m_{j}:\mathbb{T}\rightarrow\mathbb{T}$
of the form $m_{j}:z\mapsto\sum_{k\in\Z}b_{k}^{j}z^{k}$, $b_{k}^{j}\in\C$,
$j\in\underline{N}\backslash\{0\}$, where $N\in\N$ is connected
to the scaling since it indicates on which interval $[0,N]$ the unit
interval is mapped when the operator $\U$ is applied to $\mathbbm{1}_{[0,1]}$.
The high-pass filters are chosen, such that the matrix \[
M(z):=\frac{1}{\sqrt{N}}\left(m_{j}(\rho^{l}z)\right)_{j,l\in\underline{N}},\]
 where $\rho=e^{2\pi i/N}$, is unitary for almost all $z\in\mathbb{T}$.
In terms of these high-pass filters the mother wavelets are defined
as $\psi_{j}=\U m_{j}(T)\varphi$, $j\in\underline{N}\backslash\{0\}$.
\end{rem}
We first notice that for $n\in\N$ \begin{equation}
\left\{ \left(l,j\right)\in\underline{N^{n}}\times\underline{N}\right\} =\left\{ \left(\Big\lfloor\frac{k}{N}\Big\rfloor,(k)_{N}\right):k\in\underline{N^{n+1}}\right\} ,\label{eq:set(l,j)}\end{equation}
where $(m)_{N}:=m\mod N$ and $\lfloor x\rfloor=\max_{k\in\Z,k\leq x}(k)$
is the largest integer not exceeding $x$.

Clearly from the definition of the MRA, Definition \ref{def:MRA allgemein}
(\ref{enu:-DefMRA5-1}), we have the following:
\begin{enumerate}
\item If for $n\in\N_{0}$, $k\in\underline{N^{n+1}}$, $\U^{(n)}\T^{\lfloor\frac{k}{N}\rfloor}\varphi_{(k)_{N}}\neq0$
, there exists uniquely determined $\left(a_{m}^{n,k}\right)_{m\in\underline{N^{n+2}}}\in\C^{N^{n+2}}$
such that \begin{equation}
\begin{array}{cc}
 & \U^{(n)}\T^{\lfloor\frac{k}{N}\rfloor}\varphi_{(k)_{N}}=\U^{(n+1)}\sum_{m\in\underline{N^{n+2}}}a_{m}^{n,k}\T^{\lfloor\frac{m}{N}\rfloor}\varphi_{(m)_{N}}\\
\text{and }\\
 & \left(a_{m}^{n,k}=0,\ m\in\underline{N^{n+2}},\ \text{ if }\U^{(n+1)}\T^{\lfloor\frac{m}{N}\rfloor}\varphi_{(m)_{N}}=0\right).\end{array}\label{eq:onesided-1}\end{equation}

\item If $\U^{(-n)}\varphi_{i}\neq0$, $n\in\N$, $i\in\underline{N}$,
there exists uniquely determined coefficients $\left(b_{m}^{n,i}\right)_{m\in\underline{N^{2}}}\in\C^{N^{2}}$
such that\begin{equation}
\begin{array}{cc}
 & \U^{(-n)}\varphi_{i}=\U^{(-n+1)}\sum_{m\in\underline{N^{2}}}b_{m}^{n,i}\T^{\lfloor\frac{m}{N}\rfloor}\varphi_{(m)_{N}}\\
\text{ and }\\
 & \left(b_{m}^{n,i}=0,\ m\in\underline{N^{2}},\ \text{ if }\U^{(-n+1)}\T^{\lfloor\frac{m}{N}\rfloor}\varphi_{(m)_{N}}=0\right).\end{array}\end{equation}
 \end{enumerate}
\begin{rem}
We only consider $\U^{-(n)}\varphi_{i}$, since $\U^{(-n)}\T^{k}\varphi_{i}=\T^{N^{n}k}\U^{(-n)}\varphi_{i}$
by (\ref{enu:DefMRA6-1}) of Definition \ref{def:MRA allgemein}.\end{rem}
\begin{lem}
The following holds for the coefficients $\left(a_{m}^{n,k}\right)_{m\in\underline{N^{n+2}}}$,
$k\in\underline{N^{n+1}}$, $n\in\N_{0}$, and $\left(b_{m}^{n,i}\right)_{m\in\underline{N^{2}}}$,
$i\in\underline{N}$, $n\in\N$.
\begin{enumerate}
\item For fixed $n\in\N_{0}$, define\begin{equation}
Q_{n}:=\left\{ m\in\underline{N^{n+1}}:\U^{(n)}\T^{\lfloor\frac{m}{N}\rfloor}\varphi_{(m)_{N}}\neq0\right\} .\label{eq:def Q_n}\end{equation}
Then the vectors $v_{k}=\left(a_{m}^{n,k}\right)_{m\in\underline{N^{n+2}}}$,
$k\in Q_{n}$, are orthonormal. 
\item For fixed $n\in\N$, define \begin{equation}
Q_{-n}:=\left\{ m\in\underline{N}:\U^{(-n)}\varphi_{m}\neq0\right\} .\end{equation}
Then the vectors $w_{i}=\left(b_{m}^{n,i}\right)_{m\in\underline{N^{2}}}$,
$i\in Q_{-n}$, are orthonormal.
\end{enumerate}
\end{lem}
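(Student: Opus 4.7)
The key idea is to compute the inner products $\langle v_k,v_{k'}\rangle_{\C^{N^{n+2}}}$ by representing them as inner products of the vectors on the MRA-side. For $k,k'\in Q_{n}$, the vectors $\U^{(n)}\T^{\lfloor k/N\rfloor}\varphi_{(k)_{N}}$ and $\U^{(n)}\T^{\lfloor k'/N\rfloor}\varphi_{(k')_{N}}$ are nonzero by the very definition of $Q_{n}$, and equation (\ref{eq:onesided-1}) expresses each of them uniquely in terms of the level-$(n{+}1)$ family. The plan is to equate the level-$n$ inner product (which will give $\delta_{k,k'}$) with the expansion via the level-$(n{+}1)$ family (which will give $\langle v_k,v_{k'}\rangle$), and then the result drops out immediately.

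The first step is to establish orthonormality at both levels. Using the bijection (\ref{eq:set(l,j)}) between $(l,j)\in\underline{N^{n}}\times\underline{N}$ and $k\in\underline{N^{n+1}}$ via $k\mapsto(\lfloor k/N\rfloor,(k)_{N})$, part (\ref{enu:DefMRA4-1}) of Definition \ref{def:MRA allgemein} implies that
\[
\bigl\{\U^{(n)}\T^{\lfloor k/N\rfloor}\varphi_{(k)_{N}}:k\in Q_{n}\bigr\}
\quad\text{and}\quad
\bigl\{\U^{(n+1)}\T^{\lfloor m/N\rfloor}\varphi_{(m)_{N}}:m\in Q_{n+1}\bigr\}
\]
are orthonormal systems in $V_{n}$ and $V_{n+1}$ respectively (the condition $m\in Q_{n+1}$ just says we remove zero vectors). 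In particular, both pairwise inner products reduce to Kronecker deltas.

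The second step is the comparison computation. Substituting (\ref{eq:onesided-1}) into the inner product and using bilinearity,
\[
\delta_{k,k'}
=\langle \U^{(n)}\T^{\lfloor k/N\rfloor}\varphi_{(k)_{N}}\,|\,\U^{(n)}\T^{\lfloor k'/N\rfloor}\varphi_{(k')_{N}}\rangle
=\sum_{m,m'\in\underline{N^{n+2}}}a_{m}^{n,k}\overline{a_{m'}^{n,k'}}\,\langle\U^{(n+1)}\T^{\lfloor m/N\rfloor}\varphi_{(m)_{N}}\,|\,\U^{(n+1)}\T^{\lfloor m'/N\rfloor}\varphi_{(m')_{N}}\rangle.
\]
The orthonormality at level $n{+}1$ collapses the double sum to $\sum_{m\in Q_{n+1}}a_{m}^{n,k}\overline{a_{m}^{n,k'}}$, and the convention in (\ref{eq:onesided-1}) that $a_{m}^{n,k}=0$ whenever the corresponding level-$(n{+}1)$ basis vector vanishes means we may extend the sum over all $m\in\underline{N^{n+2}}$, which equals $\langle v_{k},v_{k'}\rangle_{\C^{N^{n+2}}}$. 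This proves the first statement.

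For the second statement the argument is structurally identical: one exploits the fact that $\U^{(-n)}\varphi_{i}\in V_{-n}$ is expanded in the orthonormal family at level $-n+1$ via the coefficients $b_{m}^{n,i}$, and the translation-intertwining property (\ref{enu:DefMRA6-1}) of Definition \ref{def:MRA allgemein} ensures that the relevant subset $\{\U^{(-n)}\varphi_{i}:i\in Q_{-n}\}$ is orthonormal in $V_{-n}$. The main (and only) obstacle is the bookkeeping with the zero vectors, which is handled cleanly by the convention built into the defining equations for $a_{m}^{n,k}$ and $b_{m}^{n,i}$; no further analytic input is needed beyond orthonormality at the two adjacent MRA-levels.
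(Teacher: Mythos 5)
Your proposal is correct and follows essentially the same route as the paper's proof: both compute $\delta_{k,k'}$ as the level-$n$ inner product of the nonzero basis elements, substitute the expansion from (\ref{eq:onesided-1}), and collapse the resulting double sum via orthonormality at level $n+1$ together with the vanishing convention on the coefficients. Your write-up merely makes the bookkeeping with $Q_{n+1}$ and the zero vectors more explicit than the paper's three-line computation, and treats part (2) as analogous, exactly as the paper does.
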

\begin{proof}
ad (1): For fixed $n\in\N_{0}$, let $k,l\in Q_{n}$, then \begin{align*}
\delta_{(k,l)} & =\langle\U^{(n)}\T^{\lfloor\frac{k}{N}\rfloor}\varphi_{(k)_{N}}|\U^{(n)}\T^{\lfloor\frac{l}{N}\rfloor}\varphi_{(l)_{N}}\rangle\\
 & =\big\langle\U^{(n+1)}\sum_{m\in\underline{N^{n+2}}}a_{m}^{n,k}\T^{\lfloor\frac{m}{N}\rfloor}\varphi_{(m)_{N}}\big|\U^{(n+1)}\sum_{m\in\underline{N^{n+2}}}a_{m}^{n,l}\T^{\lfloor\frac{m}{N}\rfloor}\varphi_{(m)_{N}}\big\rangle\\
 & =\sum_{m\in\underline{N^{n+2}}}a_{m}^{n,k}\overline{a}_{m}^{n,l}.\end{align*}

ad (2): Follows analogously to (1). 
\end{proof}

\subsection{\label{sub:Proof-of-Theorem}Proof of Theorem \ref{thm:MRA}}

The aim is to prove the existence of a basis as given in Theorem \ref{thm:MRA}.
For this we divide the proof in two parts. First we construct coefficients
such that the functions $\psi_{n,k}$ given in (\ref{eq:def psi1})
and (\ref{eq:def psi 2}) give an orthonormal basis. In the second
part we verify that these functions give indeed an orthonormal basis.
We prove these parts first for $n\in\N_{0}$ and then for $n\in\Z$,
$n<0$. We define the mother wavelets for each scale $n\in\Z$ such
that we obtain with their translates a basis for $W_{n}=V_{n+1}\ominus V_{n}$,
where $V_{n}$ is given in Definition \ref{def:MRA allgemein} (\ref{enu:DefMRA4-1}).
Define for $n\in\N_{0}$\begin{align*}
D_{n} & :=\left\{ m\in\underline{N^{n+2}}:a_{m}^{n,k}\neq0\text{ for some }k\in Q_{n}\right\} ,\\
D_{-n} & :=\left\{ m\in\underline{N^{2}}:b_{m}^{n,k}\neq0\text{ for some }k\in Q_{-n}\right\} ,\end{align*}
 and $d_{n}:=\card D_{n}$, $d_{-n}:=\card D_{-n}$.
\begin{itemize}
\item The mother wavelets\textit{ }for the subspaces $W_{n}$, $n\in\N_{0}$,
of the MRA shall have the form for $k\in\underline{d_{n}-q_{n}}$
with $q_{n}:=\card Q_{n}$ \begin{equation}
\psi_{n,k}:=\U^{(n+1)}\sum_{m\in\underline{N^{n+2}}}c_{m}^{n,k}\T^{\lfloor\frac{m}{N}\rfloor}\varphi_{(m)_{N}},\label{eq:def psi1}\end{equation}
where the coefficients $c_{m}^{n,k}\in\C$ are given in\textit{ }(\ref{eq:matrixone-1}).
\item For the negative index subspaces $W_{-n}$, $n\in\N$, of $L^{2}(\mu)$
we define the mother wavelets in terms of the coefficients of the
matrix in (\ref{eq:matrixtwo-1}) for $n\in\N$ and $k\in\underline{d_{-n}-q_{-n}}$,
where $q_{-n}:=\card Q_{-n}$, as \begin{equation}
\psi_{-n,k}:=\U^{(-n+1)}\sum_{m\in\underline{N^{2}}}c_{m}^{-n,k}\T^{\lfloor\frac{m}{N}\rfloor}\varphi_{(m)_{N}}.\label{eq:def psi 2}\end{equation}

\end{itemize}
The coefficients $c_{m}^{n,k}\in\C$, $c_{m}^{-n,k}\in\C$ are determined
in the following via the Gram-Schmidt process. 

For the definition of the basis we fix $n\in\N_{0}$ and we construct
an orthonormal basis for $\C^{d_{n}}$ in the following way. Consider
$\left(a_{m}^{n,k}\right)_{k\in Q_{n},m\in D_{n}}$. This is a $\left(q_{n}\times d_{n}\right)$-matrix. 

Now we consider $d_{n}-q_{n}$ vectors $e_{i}$, $i\in\underline{d_{n}-q_{n}},$
of length $d_{n}$ which are linearly independent of the vectors $\left(a_{m}^{n,k}\right)_{m\in D_{n}}$,
$k\in Q_{n}$. Via the Gram-Schmidt process we obtain $d_{n}-q_{n}$
orthonormal vectors $\left(c_{m}^{n,i}\right)_{m\in D_{n}}$, $i\in\underline{d_{n}-q_{n}}$,
of length $d_{n}$ which are orthonormal to $\left(a_{m}^{n,k}\right)_{m\in D_{n}}$,
$k\in Q_{n}$. We extend the vectors $\left(c_{m}^{n,i}\right)_{m\in D_{n}}$
to some of length $\underline{N^{n+2}}$ by $c_{m}^{n,i}=0$ if $m\in\underline{N^{n+2}}\backslash D_{n}$
and we define a matrix $\mathcal{C}{}_{n}:=\left(c_{m}^{n,k}\right)_{k\in\underline{d_{n}-q_{n}},m\in\underline{N^{n+2}}}$
of size $\left(d_{n}-q_{n}\right)\times N^{n+2}$ and $\mathcal{A}{}_{n}:=\left(a_{m}^{n,k}\right)_{k\in Q_{n},m\in\underline{N^{n+2}}}$
of size $q_{n}\times N^{n+2}$. So we obtain a matrix of size $d_{n}\times N^{n+2}$
by 

\begin{equation}
\mathcal{M}{}_{n}:=\left(\begin{array}{c}
\mathcal{A}{}_{n}\\
\mathcal{C}{}_{n}\end{array}\right).\label{eq:matrixone-1}\end{equation}

Now we turn to the construction of the coefficients for $\psi_{-n,k}$,
$n\in\N$, in (\ref{eq:def psi 2}). For each $-n$, $n\in\N$, we
define an orthonormal basis of $\C^{d_{-n}}$ in the following way. Consider
$\left(b_{m}^{n,k}\right)_{k\in Q_{-n},m\in D_{-n}}$. This is a $\left(q_{-n}\times d_{-n}\right)$-matrix.
Now we consider $d_{-n}-q_{-n}$ vectors which are linearly independent
of $\left(b_{m}^{n,k}\right)_{m\in D_{-n}}$. Via the Gram-Schmidt
process we obtain $d_{-n}-q_{-n}$ orthonormal vectors $\left(c_{m}^{-n,j}\right)_{m\in D_{-n}}$,
$j\in\underline{d_{-n}-q_{-n}}$, of length $d_{-n}$ which are orthonormal
to $\left(b_{m}^{n,i}\right)_{m\in D_{-n}}$. In the last step we
extend the vectors $\left(c_{m}^{-n,i}\right)_{m\in D_{-n}}$ to some
of length $N^{2}$ by defining $c_{m}^{-n,i}=0$ if $m\in\underline{N^{2}}\backslash D_{-n}$.
Now we define $\mathcal{D}{}_{n}:=\left(c_{m}^{-n,i}\right)_{i\in\underline{d_{-n}-q_{-n}},m\in\underline{N^{2}}}$
and $\mathcal{B}{}_{n}:=\left(b_{m}^{n,k}\right)_{k\in Q_{-n},m\in\underline{N^{2}}}$
such that \begin{equation}
\widetilde{\mathcal{M}}{}_{n}:=\left(\begin{array}{c}
\mathcal{B}{}_{n}\\
\mathcal{D}{}_{n}\end{array}\right)\label{eq:matrixtwo-1}\end{equation}
is a matrix of size $d_{-n}\times N^{2}$.

In the next step we show that we obtain indeed an orthonormal basis
with these mother wavelets given in (\ref{eq:def psi1}) and (\ref{eq:def psi 2}).
First we prove this for $n\in\N_{0}$. Recall that $W_{n}=V_{n+1}\ominus V_{n}$
for $n\in\N_{0}$. Consequently, $\cl\left(\bigcup_{n\in\N_{0}}W_{n}\cup V_{0}\right)=L^{2}(\mu)$
since for every $n\in\N_{0}$ it follows iteratively that $V_{n+1}=\bigoplus_{k=0}^{n}W_{k}\oplus V_{0}$.
Now we show that for fixed $n\in\N$ we have that $\left\{ \T^{l}\psi_{n,k}:k\in\underline{d_{n}-q_{n}},l\in\Z\right\} $
is an orthonormal basis of $W_{n}$. First we show the orthonormality. 

To show the orthonormality of $\T^{r}\psi_{n,k}$ and $\T^{s}\psi_{n,l}$,
$r,s\in\Z$, $k,l\in\underline{d_{n}-q_{n}}$, it is sufficient to
consider $\T^{r}\psi_{n,k}$ and $\psi_{n,l}$ since the operator
$\T$ is unitary. The orthonormality follows then from \begin{align*}
\langle\T^{r}\psi_{n,k}|\psi_{n,l}\rangle & =\Big\langle\sum_{m\in\underline{N^{n+2}}}c_{m}^{n,k}\U^{(n+1)}\T^{\lfloor\frac{m}{N}\rfloor+N^{n+1}r}\varphi_{(m)_{N}}\Big|\sum_{m\in\underline{N^{n+2}}}c_{m}^{n,l}\U^{(n+1)}\T^{\lfloor\frac{m}{N}\rfloor}\varphi_{(m)_{N}}\Big\rangle\\
 & =\Big\langle\sum_{m\in\underline{N^{n+2}}}c_{m}^{n,k}\T^{\lfloor\frac{m}{N}\rfloor+N^{n+1}r}\varphi_{(m)_{N}}\Big|\sum_{m\in\underline{N^{n+2}}}c_{m}^{n,l}\T^{\lfloor\frac{m}{N}\rfloor}\varphi_{(m)_{N}}\Big\rangle\\
 & =\sum_{m\in\underline{N^{n+2}}}\sum_{s\in\underline{N^{n+2}}}c_{m}^{n,k}\overline{c}_{s}^{n,l}\Big\langle\T^{N^{n+1}r+\lfloor\frac{m}{N}\rfloor}\varphi_{(m)_{N}}\Big|\T^{\lfloor\frac{s}{N}\rfloor}\varphi_{(s)_{N}}\Big\rangle\\
 & =\sum_{m\in\underline{N^{n+2}}}\sum_{s\in\underline{N^{n+2}}}c_{m}^{n,k}\overline{c}_{s}^{n,l}\cdot\delta_{(N^{n+1}r+\lfloor\frac{m}{N}\rfloor,\lfloor\frac{s}{N}\rfloor),((m)_{N},(s)_{N})}\\
 & =\delta_{r,0}\cdot\sum_{m\in\underline{N^{n+2}}}c_{m}^{n,k}\overline{c}_{m}^{n,l}\\
 & =\delta_{r,0}\cdot\delta_{k,l}.\end{align*}

In the next step we show that $V_{n+1}=V_{n}\oplus W_{n}$. We consider
a basis element of $V_{n+1}$ of the form $\U^{(n+1)}\T^{\lfloor\frac{k}{N}\rfloor}\varphi_{(k)_{N}}$,
$k\in\underline{N^{n+2}}$, and show that it is a linear combination
of functions $\U^{(n)}\T^{\lfloor\frac{l}{N}\rfloor}\varphi_{(l)_{N}}$
and $\psi_{n,m}$, $l\in\underline{N^{n+1}}$, $m\in\underline{d_{n}-q_{n}}$.
It is sufficient to consider only $k\in\underline{N^{n+2}}$ by Definition
\ref{def:MRA allgemein} (\ref{enu:-DefMRA5-1}). If $\U^{(n+1)}\T^{\lfloor\frac{k}{N}\rfloor}\varphi_{(k)_{N}}=0$
it is obvious satisfied. If $\U^{(n+1)}\T^{\lfloor\frac{k}{N}\rfloor}\varphi_{(k)_{N}}\neq0$,
$k\in\underline{N^{n+2}}$, it can be written as the following linear
combination:\begin{align*}
 & \U^{(n+1)}\T^{\lfloor\frac{k}{N}\rfloor}\varphi_{(k)_{N}}\\
= & \U^{(n+1)}\left(\sum_{m\in\underline{N^{n+2}}}\underbrace{\left(\sum_{l\in Q_{n}}\overline{a}_{k}^{n,l}a_{m}^{n,l}+\sum_{l\in\underline{d_{n}-q_{n}}}\overline{c}_{k}^{n,l}c_{m}^{n,l}\right)}_{=\delta_{k,m}}\T^{\lfloor\frac{m}{N}\rfloor}\varphi_{(m)_{N}}\right)\\
= & \U^{(n+1)}\left(\sum_{l\in Q_{n}}\overline{a}_{k}^{n,l}\sum_{m\in\underline{N^{n+2}}}a_{m}^{n,l}\T^{\lfloor\frac{m}{N}\rfloor}\varphi_{(m)_{N}}+\sum_{l\in\underline{d_{n}-q_{n}}}\overline{c}_{k}^{n,l}\sum_{m\in\underline{N^{n+2}}}c_{m}^{n,l}\T^{\lfloor\frac{m}{N}\rfloor}\varphi_{(m)_{N}}\right)\\
= & \sum_{l\in Q_{n}}\overline{a}_{k}^{n,l}\U^{(n)}\T^{\lfloor\frac{l}{N}\rfloor}\varphi_{(l)_{N}}+\sum_{l\in\underline{d_{n}-q_{n}}}\overline{c}_{k}^{n,l}\U^{(n+1)}\sum_{m\in\underline{N^{n+2}}}c_{m}^{n,l}\T^{\lfloor\frac{m}{N}\rfloor}\varphi_{(m)_{N}}\\
= & \sum_{l\in Q_{n}}\overline{a}_{k}^{n,l}\U^{(n)}T^{\lfloor\frac{l}{N}\rfloor}\varphi_{(l)_{N}}+\sum_{l\in\underline{d_{n}-q_{n}}}\overline{c}_{k}^{n,l}\psi_{n,l}.\end{align*}

If we consider $\T^{l}\psi_{n,k}$ and $\T^{r}\psi_{m,s}$ for $n,m\in\N$,
$n\neq m$, $l,r\in\Z$, $k\in\underline{d_{n}-q_{n}}$, $s\in\underline{d_{m}-q_{m}}$,
the orthonormality follows from $\T^{l}\psi_{n,k}\in W_{n}$, $\T^{r}\psi_{m,s}\in W_{m}$
and by the definition of $W_{n}$, $W_{m}$.

Now we consider the closed subspaces $V_{n}$ of $L^{2}(\mu)$ with
$n<0$ and show the analogous results. We show that for fixed $n\in\N$
$\left\{ \T^{N^{n}k}\psi_{-n,l}:\, l\in\underline{d_{-n}-q_{-n}},\, k\in\Z\right\} $
is an orthonormal basis of $W_{-n}=V_{-n+1}\ominus V_{-n}$. First
we show that any function $\U^{(-n+1)}\varphi_{j}$ can be written
as a linear combination of functions $\U^{(-n)}\varphi_{i}$ and $\psi_{-n,l}$,
$i\in\underline{N}$, $l\in\underline{d_{-n}-q_{-n}}$. This linear
combination is precisely\begin{align*}
 & \U^{(-n+1)}\varphi_{j}\\
= & \U^{(-n+1)}\left(\sum_{m\in\underline{N^{2}}}\left(\underbrace{\sum_{i\in Q_{-n}}\overline{b}_{j}^{n,i}b_{m}^{n,i}+\sum_{l\in\underline{d_{-n}-q_{-n}}}\overline{c}_{j}^{-n,l}c_{m}^{-n,l}}_{=\delta_{j,m}}\right)\T^{\lfloor\frac{m}{N}\rfloor}\varphi_{(m)_{N}}\right)\\
= & \U^{(-n+1)}\left(\sum_{i\in Q_{-n}}\overline{b}_{j}^{n,i}\sum_{m\in\underline{N^{2}}}b_{m}^{n,i}\T^{\lfloor\frac{m}{N}\rfloor}\varphi_{(m)_{N}}+\sum_{m\in\underline{N^{2}}}\sum_{l\in\underline{d_{-n}-q_{-n}}}\overline{c}_{j}^{-n,l}c_{m}^{-n,l}\T^{\lfloor\frac{m}{N}\rfloor}\varphi_{(m)_{N}}\right)\\
= & \sum_{i\in Q_{-n}}\overline{b}_{j}^{n,i}\U^{(-n)}\varphi_{i}+\sum_{l\in\underline{d_{-n}-q_{-n}}}\overline{c}_{j}^{-n,l}\U^{(-n+1)}\sum_{m\in\underline{N^{2}}}c_{m}^{-n,l}\T^{\lfloor\frac{m}{N}\rfloor}\varphi_{(m)_{N}}\\
= & \sum_{i\in Q_{-n}}\overline{b}_{j}^{n,i}\U^{(-n)}\varphi_{i}+\sum_{l\in\underline{d_{-n}-q_{-n}}}\overline{c}_{j}^{-n,l}\psi_{-n,l}.\end{align*}
We have to show the orthonormality only for $\psi_{-n,l}$ and $\psi_{-n,k}$
since $\T$ is a unitary operator and $\U^{(-n+1)}\T^{k}\varphi_{j}$
are mapped to orthonormal functions. For $\psi_{-n,l}$ and $\psi_{-n,k}$,
$l,k\in\underline{d_{-n}-q_{-n}}$, the orthonormality follows from\begin{align*}
\langle\psi_{-n,l}|\psi_{-n,k}\rangle & =\Big\langle\sum_{m\in\underline{N^{2}}}c_{m}^{-n,l}\U^{(-n+1)}\T^{\lfloor\frac{m}{N}\rfloor}\varphi_{(m)_{N}}\Big|\sum_{m\in\underline{N}^{2}}c_{m}^{-n,k}\U^{(-n+1)}\T^{\lfloor\frac{m}{N}\rfloor}\varphi_{(m)_{N}}\Big\rangle\\
 & =\sum_{m\in\underline{N^{2}}}c_{m}^{-n,l}\overline{c}_{m}^{-n,k}\\
 & =\delta_{l,k}.\end{align*}
Furthermore, it follows that $L^{2}(\mu)=\bigoplus_{k\in\Z}W_{k}$,
since we have shown before that $\cl\left(\bigcup_{n\in\N_{0}}W_{n}\cup V_{0}\right)=L^{2}(\mu)$.
Consequently, we have that \[
\left\{ \T^{l}\psi_{n,k}:n\in\Z,k\in\underline{d_{n}-q_{n}},l\in\Z\right\} \]
 is an ONB of $L^{2}(\mu)$.
\begin{rem}
For the proof of Corollary \ref{cor:onesided} we have to consider
the first part of the proof of Theorem \ref{thm:MRA} and show the
orthonormality between $\psi_{n,k}$ and $\varphi_{i}$ in addition,
which follows from the construction of the mother wavelets. 
\end{rem}

\subsection{\label{sub:Multiresolution-analysis-for}Abstract multiplicative
Multiresolution analysis }

In this section we want to consider how the general results simplify
if we impose the extra condition of a multiplicative MRA. 

Recall from the introduction that in the case of Definition \ref{def:MRA allgemein},
we say that we have a multiplicative MRA if there exists an operator
$\U$ such that $\U^{(n)}=\U^{n}$ for all $n\in\N$ and $\U^{(-n)}=\left(\U^{*}\right)^{n}$,
$n\in\N$. We then say $\left(\mu,\left(\left(\U\right)^{n},\left(\U^{*}\right)^{n}\right)_{n\in\N_{0}},\T\right)$
allows a two-sided multiplicative MRA. 

The key observation is contained in Lemma \ref{lem:Wn} which we prove
first.
\begin{proof}
[Proof of Lemma \ref{lem:Wn}] Recall that $\left\{ \T^{l}\psi_{0,k}:k\in\underline{d_{0}-N},\ l\in\Z\right\} $
is an orthonormal basis of $W_{0}$. We have $\psi_{0,k}=\sum_{m\in\underline{N^{2}}}c_{m}^{0,k}\U\T^{\lfloor\frac{m}{N}\rfloor}\varphi_{(m)_{N}}$
and we show that for fixed $n\in\N$, $\U^{n}W_{0}=W_{n}$. First
it follows that $\U^{n}\T^{m}\psi_{0,k}\in W_{n}\subset V_{n+1}$,
$n\in\N$, $m\in\Z$, since \[
\U^{n}\T^{m}\psi_{0,k}=\sum_{l\in\underline{N^{2}}}c_{l}^{0,k}\U^{n+1}\T{}^{\lfloor\frac{l}{N}\rfloor+Nm}\varphi_{(l)_{N}}\]
 and \begin{align*}
 & \langle\U^{n}\T^{m}\psi_{0,k}|\U^{n}\T^{r}\varphi_{i}\rangle\\
= & \langle\U^{n}\T^{m}\sum_{l\in\underline{N^{2}}}c_{l}^{0,k}\U\T^{\lfloor\frac{l}{N}\rfloor}\varphi_{(l)_{N}}|\U^{n}T^{r}\sum_{j\in\underline{N^{2}}}a_{j}^{0,i}\U\T^{\lfloor\frac{j}{N}\rfloor}\varphi_{(j)_{N}}\rangle\\
= & \langle\U^{n+1}\T^{Nm}\sum_{l\in\underline{N^{2}}}c_{l}^{0,k}\T^{\lfloor\frac{l}{N}\rfloor}\varphi_{(l)_{N}}|\U^{n+1}\T^{Nr}\sum_{j\in\underline{N^{2}}}a_{j}^{0,i}\T^{\lfloor\frac{j}{N}\rfloor}\varphi_{(j)_{N}}\rangle\\
= & \delta_{m,r}\cdot\sum_{l\in\underline{N^{2}}}c_{l}^{0,k}a_{l}^{0,i}=0.\end{align*}
Consequently, $\U^{n}W_{0}\subset W_{n}$. Now consider $\U^{n+1}\T^{m}\varphi_{j}\in V_{n+1}$,
$m\in\Z$, $j\in\underline{N}$, and we show that this can be written
as a linear combination of functions $\U^{n}\T^{l}\varphi_{i}$ and
$\U^{n}\T^{r}\psi_{0,k}$, $l,r\in\Z$, $i\in\underline{N}$, $k\in\underline{d_{0}-N}$,
by considering the scalar product. First we recall that $\U\T^{\lfloor\frac{k}{N}\rfloor}\varphi_{(k)_{N}}=\sum_{i\in\underline{N}}\overline{a}_{k}^{0,i}\varphi_{i}+\sum_{l\in\underline{d_{0}-N}}\overline{c}_{k}^{0,l}\psi_{0,l}$
for $k\in\underline{N^{2}}$ from the proof of Theorem \ref{thm:MRA},
and hence for $k\in\underline{N^{2}}$ \begin{align*}
1 & =\langle\U\T^{\lfloor\frac{k}{N}\rfloor}\varphi_{(k)_{N}}|\sum_{i\in\underline{N}}\overline{a}_{k}^{0,i}\varphi_{i}+\sum_{l\in\underline{d_{0}-N}}\overline{c}_{k}^{0,l}\psi_{0,l}\rangle\\
 & =\sum_{i\in\underline{N}}\overline{a}_{k}^{0,i}a_{k}^{0,i}+\sum_{l\in\underline{d_{0}-N}}\overline{c}_{k}^{0,l}c_{k}^{0,l}.\end{align*}
It follows that for $m\in\underline{N^{n+2}}$ with $m=k+N^{2}k_{1}$,
$k\in\underline{N^{2}}$, $k_{1}\in\underline{N^{n}}$, we have $\lfloor\frac{m}{N}\rfloor=\lfloor\frac{k}{N}\rfloor+Nk_{1}$
and $(m)_{N}=(k)_{N}$, and so \begin{align*}
 & \langle\U^{n+1}\T^{\lfloor\frac{k}{N}\rfloor+Nk_{1}}\varphi_{(k)_{N}}|\U^{n}\T^{k_{1}}\sum_{i\in\underline{N}}\overline{a}_{k}^{0,i}\varphi_{i}+\U^{n}\T^{k_{1}}\sum_{l\in\underline{d_{0}-N}}\overline{c}_{k}^{0,l}\psi_{0,l}\rangle\\
= & \langle\U^{n+1}\T^{\lfloor\frac{k}{N}\rfloor+Nk_{1}}\varphi_{(k)_{N}}|\U^{n}\T^{k_{1}}\sum_{i\in\underline{N}}\overline{a}_{k}^{0,i}\sum_{l\in\underline{N^{2}}}a_{l}^{0,i}\U\T^{\lfloor\frac{l}{N}\rfloor}\varphi_{(l)_{N}}\rangle\\
 & +\langle\U^{n+1}\T^{\lfloor\frac{k}{N}\rfloor+Nk_{1}}\varphi_{(k)_{N}}|\U^{n}\T^{k_{1}}\sum_{l\in\underline{d_{0}-N}}\overline{c}_{k}^{0,l}\sum_{i\in\underline{N^{2}}}c_{i}^{0,l}\U\T^{\lfloor\frac{i}{N}\rfloor}\varphi_{(i)_{N}}\rangle\\
= & \langle\T^{\lfloor\frac{k}{N}\rfloor+Nk_{1}}\varphi_{(k)_{N}}|\T^{Nk_{1}}\sum_{i\in\underline{N}}\overline{a}_{k}^{0,i}\sum_{l\in\underline{N^{2}}}a_{l}^{0,i}\T^{\lfloor\frac{l}{N}\rfloor}\varphi_{(l)_{N}}\rangle\\
 & +\langle\T^{\lfloor\frac{k}{N}\rfloor+Nk_{1}}\varphi_{(k)_{N}}|\T^{Nk_{1}}\sum_{l\in\underline{d_{0}-N}}\overline{c}_{k}^{0,l}\sum_{i\in\underline{N^{2}}}c_{i}^{0,l}\T^{\lfloor\frac{i}{N}\rfloor}\varphi_{(i)_{N}}\rangle\\
= & \sum_{i\in\underline{N}}\overline{a}_{k}^{0,i}a_{k}^{0,i}+\sum_{l\in\underline{d_{0}-N}}\overline{c}_{k}^{0,l}c_{k}^{0,l}\\
= & 1.\end{align*}
Now we notice that we can write any element $k\in\Z$ as $k=k_{0}+N^{n+2}l$
for some $k_{0}\in\underline{N^{n+2}}$ and $l\in\Z$. Consequently,
with $\U\T^{N}|_{V_{0}}=\T\U|_{V_{0}}$ we obtain the general result
for $\U^{n+1}\T^{k}\varphi_{j}$, $k\in\Z$, $j\in\underline{N}$. 

To obtain $W_{-n}=\left(\U^{*}\right)^{n}W_{-1}$, $W_{-1}=V_{0}\ominus V_{-1}$,
$n\in\N$, we can proceed as above. First we have from the proof of
Theorem \ref{thm:MRA} that $\psi_{-1,k}=\sum_{l\in\underline{N^{2}}}c_{l}^{-1,k}\T^{\lfloor\frac{l}{N}\rfloor}\varphi_{(l)_{N}}$
and that $\varphi_{j}$, $j\in\underline{N}$, can be represented
as $\varphi_{j}=\sum_{i\in\underline{N}}\overline{b}_{0,j}^{-1,i}\U^{*}\varphi_{i}+\sum_{l\in\underline{d_{-1}-N}}\overline{c}_{j}^{-1,l}\psi_{-1,l}$.
With these observations we obtain as above that \[
\langle\left(\U^{*}\right)^{n}\T^{m}\varphi_{j}|\left(\U^{*}\right)^{n-1}\T^{r}\psi_{-1,k}\rangle=0\]
 and \[
\langle\left(\U^{*}\right)^{n-1}\T^{k}\varphi_{j}|\left(\U^{*}\right)^{n}\sum_{i\in\underline{N}}\overline{b}_{j}^{-1,i}\varphi_{i}+\left(\U^{*}\right)^{n-1}\sum_{l\in\underline{d_{-1}-N}}\overline{c}_{j}^{-1,l}\psi_{-1,l}\rangle=1.\]
\end{proof}
\begin{rem}
If we have $\U\U^{*}=\id$, then $W_{0}=\U\left(W_{-1}\right)$. Notice
that $\U$ is not necessarily injective on $W_{-1}$. 
\end{rem}
Now we turn to the mother wavelets. 
\begin{rem}
\label{rem:motherwaveletsmulitplicative}If $\U^{(n)}=\U^{n}$, $\U^{(-n)}=\left(\U^{*}\right)^{n}$,
then we only consider the mother wavelets for $k\in\underline{d_{0}-N}$.
So \[
\psi_{0,k}=\U\sum_{l\in\underline{N^{2}}}c_{l}^{0,k}\T^{\lfloor\frac{l}{N}\rfloor}\varphi_{(l)_{N}},\]
where the coefficients are from (\ref{eq:onesided-1}) and we define
$\psi_{k}:=\psi_{0,k}$. 

For the negative indexed part of the construction we write for $k\in\underline{d_{-1}-N}$\[
\psi_{-,k}=\sum_{l\in\underline{N^{2}}}c_{l}^{-1,k}\T^{\lfloor\frac{l}{N}\rfloor}\varphi_{(l)_{N}}.\]

\end{rem}

\subsection{Translation completeness\label{sub:Translation-completeness}}

In the following we assume a stronger condition than (\ref{enu:-DefMRA5-1})
of Definition \ref{def:MRA allgemein}, namely that the father wavelets
are translation complete, i.e. for $j\in\underline{N}$\begin{equation}
\varphi_{j}\in\spn\U\left\{ \T^{j}\varphi_{i}:i\in\underline{N}\right\} .\label{eq:einfacheformphi}\end{equation}
This condition implies that there exist complex numbers $a_{i}^{0,j}$,
$i\in\underline{N}$, such that \[
\varphi_{j}=\sum_{i\in\underline{N}}a_{i}^{0,j}\U\T^{j}\varphi_{i}.\]
We would like to point out that this condition is also satisfied for
the particular case of MIM where the father wavelets $\varphi_{j}$,
$j\in\underline{N},$ are chosen to be the scaled characteristic functions
on the cylinder sets $[j]$ (see Section \ref{sec:Construction-of-a}).

We have that (\ref{eq:onesided-1}) takes the following form for $k\in\Z$,
$j\in\underline{N}$, $n\in\N$, \[
\U^{n}\T^{k}\varphi_{j}=\sum_{l\in\underline{N^{2}}}a_{l}^{0,j}\U^{n+1}\T^{Nk+\lfloor\frac{l}{N}\rfloor}\varphi_{(l)_{N}}\]
 and under condition (\ref{eq:einfacheformphi}) this simplifies to

\[
\U^{n}\T^{k}\varphi_{j}=\sum_{i\in\underline{N}}a_{i}^{0,j}\U^{n+1}\T^{Nk+j}\varphi_{i}.\]
To simplify the notation we set $a_{l}^{j}:=a_{l}^{0,j}$. We now
show that condition (\ref{eq:einfacheformphi}) allows us to simplify
the construction of the mother wavelets. 
\begin{lem}
\label{def:mother wavelets markov-1}Under condition (\ref{eq:einfacheformphi})
one possible choice of the matrix $\mathcal{M}{}_{0}$ in (\ref{eq:matrixone-1})
has a block structure consisting of $N$ blocks.\end{lem}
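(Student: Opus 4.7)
The plan is to use translation completeness to show that the rows of the coefficient matrix $\mathcal{A}_0$ are already supported on disjoint groups of $N$ consecutive columns, and then exploit the nonuniqueness in the Gram--Schmidt step producing $\mathcal{C}_0$ to keep each of its rows within a single such group. After a reordering of rows, the combined matrix $\mathcal{M}_0$ becomes block-diagonal with $N$ blocks indexed by $j\in\underline{N}$.

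First I would isolate the block structure of $\mathcal{A}_0$. By translation completeness, $\varphi_j = \sum_{i\in\underline{N}} a_i^j\, \U\T^j\varphi_i$ for $j\in Q_0$. Comparing this with the general expansion $\varphi_j = \U\sum_{m\in\underline{N^2}} a_m^{0,j}\, \T^{\lfloor m/N\rfloor}\varphi_{(m)_N}$, and writing each $m\in\underline{N^2}$ uniquely as $m = Nj'+i$ with $j',i\in\underline{N}$, the uniqueness of the expansion coefficients (guaranteed by the orthonormality of $\{\U\T^k\varphi_i\}$ in Proposition~\ref{pro:The-operatorsUT-1}(\ref{enu:(4)TU-1})) forces $a_m^{0,j}=0$ unless $\lfloor m/N\rfloor = j$. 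Hence the $j$-th row of $\mathcal{A}_0$ is supported in the column block $B_j := \{Nj,Nj+1,\ldots,Nj+N-1\}$, and rows indexed by distinct $j\in Q_0$ have pairwise disjoint supports.

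Next I would build $\mathcal{C}_0$ respecting this decomposition. Write $\C^{N^2}$ as the orthogonal direct sum of the coordinate subspaces $E_j := \spn\{e_m : m\in B_j\}$, $j\in\underline{N}$. For each $j\in Q_0$ the row $v_j:=(a_m^{0,j})_{m\in\underline{N^2}}$ lies in $E_j$, and I would complete $\{v_j\}$ by Gram--Schmidt to an orthonormal basis of $E_j$ staying entirely inside $E_j$, producing at most $N-1$ new unit vectors. For $j\in\underline{N}\setminus Q_0$ I would simply pick an orthonormal basis of $E_j$. Each new vector is supported in a single $E_j$, hence orthogonal to all rows of $\mathcal{A}_0$ sitting in other blocks by disjoint support, and orthogonal to $v_j$ itself by construction within $E_j$. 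After discarding the zero columns (those outside $D_0$), this yields an admissible choice for $\mathcal{C}_0$ in the sense of Section~\ref{sub:Proof-of-Theorem}.

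Finally, reordering the rows of $\mathcal{M}_0$ so that rows supported in $B_j$ are contiguous, and grouping the columns in the same way, exhibits $\mathcal{M}_0$ as block-diagonal with $N$ blocks, one per $j\in\underline{N}$. The only subtlety is verifying that the block-wise Gram--Schmidt produces a valid $\mathcal{C}_0$, but this is automatic from the disjoint-block support established in the first step. In short, translation completeness essentially diagonalizes the refinement relation, so no real obstacle appears beyond careful bookkeeping.
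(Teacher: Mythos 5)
Your first step is correct and in fact makes explicit what the paper's proof leaves implicit: comparing the translation\-/completeness relation $\varphi_{j}=\sum_{i\in\underline{N}}a_{i}^{0,j}\U\T^{j}\varphi_{i}$ with the expansion (\ref{eq:onesided-1}), and using the uniqueness of the coefficients (orthonormality from Definition \ref{def:MRA allgemein} (\ref{enu:DefMRA4-1}) plus the convention $a_{m}^{0,j}=0$ whenever $\U\T^{\lfloor\frac{m}{N}\rfloor}\varphi_{(m)_{N}}=0$), does force $a_{m}^{0,j}=0$ unless $\lfloor\frac{m}{N}\rfloor=j$. So the rows of $\mathcal{A}_{0}$ have pairwise disjoint supports, one per column block $B_{j}$, and the overall strategy (block\-/wise Gram--Schmidt, then reassemble) is exactly the paper's.

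The completion step, however, does not work as written. You complete $v_{j}$ to a full orthonormal basis of the $N$\-/dimensional coordinate space $E_{j}$, which produces $N-1$ new vectors per block, hence $N(N-1)$ rows of $\mathcal{C}_{0}$ in total; but $\mathcal{C}_{0}$ must have exactly $d_{0}-q_{0}$ rows, each supported in $D_{0}$ (the construction in Section \ref{sub:Proof-of-Theorem} requires $c_{m}^{0,i}=0$ for $m\notin D_{0}$), and $D_{0}\cap B_{j}$ is precisely the support of $v_{j}$, contained in $\left\{ Nj+i:\U\T^{j}\varphi_{i}\neq0\right\} $, of cardinality $q^{j}\leq N$. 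The surplus vectors are supported on coordinates $Nj+i$ with $\U\T^{j}\varphi_{i}=0$, so the corresponding functions $\U\sum_{m}c_{m}\T^{\lfloor\frac{m}{N}\rfloor}\varphi_{(m)_{N}}$ lose norm or vanish, and ``discarding the columns outside $D_{0}$'' afterwards destroys both normalization and mutual orthogonality of the surviving restrictions. The repair is what the paper actually does: run the Gram--Schmidt inside the $q^{j}$\-/dimensional subspace $\spn\left\{ e_{Nj+i}:i\in Q^{j}\right\} $ with $Q^{j}=\{i\in\underline{N}:\U\T^{j}\varphi_{i}\neq0\}$, obtaining $q^{j}-1$ new rows and a $q^{j}\times q^{j}$ unitary block $M_{j}$, so that the total count is $\sum_{j}\left(q^{j}-1\right)=d_{0}-q_{0}$. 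Note also that $Q_{0}=\underline{N}$ here (each $\varphi_{j}$ is a nonzero member of an orthonormal system), so your case $j\in\underline{N}\setminus Q_{0}$ is vacuous.
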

\begin{proof}
Define $Q^{k}:=\{j\in\underline{N}:\,\U\T^{k}\varphi_{j}\neq0\}$
and $q^{k}:=\card Q^{k}$ for each $k\in\underline{N}$. Then $\left(a_{j}^{k}\right)_{j\in Q^{k}}$
is a vector of length $q^{k}$ and we choose $q^{k}-1$ linearly independent
vectors to $\left(a_{j}^{k}\right)_{j\in Q^{k}}$ of length $q^{k}$.
Via the Gram-Schmidt process we obtain vectors $\left(c_{j}^{k,l}\right)_{j\in Q^{k}}$,
$l\in\underline{q^{k}}\backslash\{0\}$, orthonormal to $\left(a_{j}^{k}\right)_{j\in Q^{k}}$.
By setting $c_{j}^{k,l}=0$ if $j\in\underline{N}\backslash Q^{k}$
we extend the vectors $\left(c_{j}^{k,l}\right)_{j\in Q^{k}}$ to
$\left(c_{j}^{k,l}\right)_{j\in\underline{N}}$. Then $M_{k}:=\left(\begin{array}{c}
\left(a_{j}^{k}\right)_{j\in\underline{N}}\\
\left(c_{j}^{k,l}\right)_{l\in\underline{q^{k}}\backslash\{0\},j\in\underline{N}}\end{array}\right)$ is a matrix of size $q^{k}\times N$.

The matrix $\widehat{\mathcal{M}}_{0}=\left(h_{ij}\right)_{i\in\underline{q_{1}},j\in\underline{N^{2}}}$
given with the blocks $M_{k}$, $k\in\underline{N}$, by for $k=0$
\[
\left(h_{ij}\right)_{i\in\underline{q^{0}},j\in\underline{N}}=M_{0},\]
for $k\in\underline{N}\backslash\{0\}$ \[
\left(h_{ij}\right)_{i\in\underline{\sum_{l=0}^{k}q^{l}}\backslash\underline{\sum_{l=0}^{k-1}q^{l}},j\in\underline{(k+1)N}\backslash\underline{kN}}=M_{k}\]
and otherwise zero satisfies the conditions imposed on $\mathcal{M}_{0}$
in (\ref{eq:matrixone-1}), i.e. if we restrict the columns to those
in $D_{1}$, it is unitary, and $\widehat{\mathcal{M}_{0}}$ is of
size $q_{1}\times N^{2}$ since $\sum_{k\in\underline{N}}q^{k}=q_{1}$.
We notice that $\widehat{\mathcal{M}}_{0}$ is ordered in a different
way than $\mathcal{M}_{0}$, since the rows $\left(a_{j}^{k}\right)_{j\in Q^{k}}$
are not grouped in $\mathcal{M}_{0}$. \end{proof}
\begin{rem}
\label{rem:darstellung phi}$\ $
\begin{enumerate}
\item If $\U^{(n)}=\U^{n}$, $\U^{(-n)}=\left(\U^{*}\right)^{n}$ and (\ref{eq:einfacheformphi}),
the mother wavelets take the simpler form for $k=0$, $l\in\underline{q^{0}}\backslash\{0\}$
and for $k\in\underline{N}\backslash\{0\}$, $l\in\underline{\sum_{i=0}^{k}q^{i}}\backslash\underline{\sum_{i=0}^{k-1}q^{i}}$,
as \[
\psi_{l}=\sum_{j\in\underline{N}}c_{j}^{k,l}\U\T^{k}\varphi_{j},\]
where the coefficients are as constructed in Lemma \ref{def:mother wavelets markov-1}.
For negative indexed part we define for $k\in\underline{d_{-1}-N}$\[
\psi_{-,k}=\sum_{l\in\underline{N^{2}}}c_{l}^{-1,k}\T^{\lfloor\frac{l}{N}\rfloor}\varphi_{(l)_{N}}.\]

\item In the case of (\ref{eq:einfacheformphi}), or the slightly weaker
statement \begin{equation}
\U^{(n)}\T^{\lfloor\frac{k}{N}\rfloor}\varphi_{(k)_{N}}=\sum_{i\in\underline{N}}a_{i}^{n,k}\U^{(n+1)}\T^{N\lfloor\frac{k}{N}\rfloor+(j)_{N}}\varphi_{i}\label{eq:simpleform Un}\end{equation}
 we can obtain the coefficients for the mother wavelets by constructing
for each $k\in\underline{N^{n}}$ with $\U^{(n)}\T^{k}\varphi_{j}\not\neq0$
for at least one $j\in\underline{N}$ a matrix of size $q^{n,k}\times q^{n,k}$,
where $q^{n,k}=\card\{j\in\underline{N}:\,\U^{(n)}\T^{k}\varphi_{j}\not\neq0\}$
instead of one unitary matrix of size $d_{n}\times d_{n}$. In this
way we need at most $N^{n}$ matrices on the scale $n\in\N$. 
\end{enumerate}
\end{rem}
Now we turn to a correspondence to the construction of a wavelet basis
for MIM. The next proposition shows how the incidence matrix of MIM
plays a role in the MRA. 
\begin{prop}
\label{pro:transition matrix}In the case of $\U^{(n)}=\left(\U^{(1)}\right)^{n}$,
$n\in\N_{0}$, (\ref{eq:einfacheformphi}) and if it further holds
that $a_{i}^{0,j}\neq0$ if and only if $\U\T^{j}\varphi_{i}\neq0$,
$i,j\in\underline{N}$, then we have for $n\in\N$, $k\in\Z$, $\U^{n}\T^{k}\varphi_{j}\neq0$
if and only if for all $i=0,\dots,n-2$, $\U\T^{k_{i+1}}\varphi_{k_{i}}\not\neq0$
and $\U\T^{k_{0}}\varphi_{j}\neq0$, where $k=\sum_{i=0}^{n-1}k_{i}N^{i}+lN^{n}$,
$k_{i}\in\underline{N}$ , $i\in\underline{n}$, and $l\in\Z$. \end{prop}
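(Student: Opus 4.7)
The plan is to induct on $n$. In the base case $n=1$, writing $k=k_0+Nl$ with $k_0\in\underline{N}$ and $l\in\Z$ and using the commutation $\T\U=\U\T^N$ from (\ref{enu:(1)TU-1}) of Proposition \ref{pro:The-operatorsUT-1} (in its multiplicative form) gives $\U\T^k=\T^l\U\T^{k_0}$; since $\T$ is unitary, this yields $\U\T^k\varphi_j\neq 0$ if and only if $\U\T^{k_0}\varphi_j\neq 0$, which is the stated condition in this case (the intermediate list being empty).

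For the inductive step from $n$ to $n+1$ I decompose $k=k_0+Nk^{(1)}$ with $k^{(1)}=\sum_{i=0}^{n-1}k_{i+1}N^i+lN^n$, and by the same commutation
\[
\U^{n+1}\T^k\varphi_j=\U^n\T^{k^{(1)}}\,\U\T^{k_0}\varphi_j.
\]
If $\U\T^{k_0}\varphi_j=0$ both sides are zero and the final transition in the admissibility list fails, so I assume $\U\T^{k_0}\varphi_j\neq 0$, equivalently $a_j^{0,k_0}\neq 0$ by the extra hypothesis. The key step is to apply $\U^n\T^{k^{(1)}}$ to the expansion (\ref{eq:einfacheformphi}) for $\varphi_{k_0}$ and to push $\T^{k^{(1)}}$ through $\U$, obtaining
\[
\U^n\T^{k^{(1)}}\varphi_{k_0}=\sum_{i\in\underline{N}}a_i^{0,k_0}\,\U^{n+1}\T^k\varphi_i.
\]

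By property (\ref{enu:DefMRA4-1}) of Definition \ref{def:MRA allgemein} the nonzero members of $\{\U^{n+1}\T^k\varphi_i\}_{i\in\underline{N}}$ are orthonormal unit vectors, and $\U^n\T^{k^{(1)}}\varphi_{k_0}$ itself is either zero or a unit vector. Taking squared norms of the identity above yields $\|\U^n\T^{k^{(1)}}\varphi_{k_0}\|^2=\sum_{i\in\underline{N}}|a_i^{0,k_0}|^2\,\1_{\{\U^{n+1}\T^k\varphi_i\neq 0\}}\in\{0,1\}$, whereas the same norm computation applied to (\ref{eq:einfacheformphi}) directly gives $\sum_{i\in\underline{N}}|a_i^{0,k_0}|^2=\|\varphi_{k_0}\|^2=1$. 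Since the coefficients $|a_i^{0,k_0}|^2$ are strictly positive on their support, matching these two sums forces a dichotomy: either every $i$ with $a_i^{0,k_0}\neq 0$ satisfies $\U^{n+1}\T^k\varphi_i\neq 0$, or none of them does. Under the extra hypothesis the set $\{i:a_i^{0,k_0}\neq 0\}$ coincides with $\{i:\U\T^{k_0}\varphi_i\neq 0\}$ and contains $j$, so I conclude that $\U^{n+1}\T^k\varphi_j\neq 0$ if and only if $\U\T^{k_0}\varphi_j\neq 0$ and $\U^n\T^{k^{(1)}}\varphi_{k_0}\neq 0$.

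Applying the inductive hypothesis to $\U^n\T^{k^{(1)}}\varphi_{k_0}$, whose decomposition satisfies $k^{(1)}_i=k_{i+1}$, rewrites the second condition as $\U\T^{k_{i+1}}\varphi_{k_i}\neq 0$ for $i=0,\dots,n-1$, completing the induction. The main obstacle is the dichotomy step: without the extra hypothesis $a_i^{0,k_0}\neq 0 \Leftrightarrow \U\T^{k_0}\varphi_i\neq 0$, partial cancellations could in principle decouple the vanishing of individual summands from the vanishing of $\U^n\T^{k^{(1)}}\varphi_{k_0}$ and only one direction of the iff would survive; the norm identity $\|\varphi_{k_0}\|=1$ combined with this hypothesis is precisely what enforces the all-or-nothing behaviour.
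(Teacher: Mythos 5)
Your proof is correct and takes essentially the same route as the paper's: decompose $k=k_{0}+Nk^{(1)}$, factor $\U^{n+1}\T^{k}\varphi_{j}=\U^{n}\T^{k^{(1)}}\U\T^{k_{0}}\varphi_{j}$ via the commutation relation on $V_{0}$, and play this off against the expansion of $\varphi_{k_{0}}$ from (\ref{eq:einfacheformphi}) using orthonormality of the nonzero $\U^{n+1}\T^{k}\varphi_{i}$. If anything your write-up is tighter than the paper's: the paper only treats the case $n=2$ and asserts that the general case ``follows iteratively'', and your norm/dichotomy count makes explicit the orthogonality reasoning behind the backward implication, whose justification in the paper is phrased in a way that reads as circular.
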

\begin{proof}
We prove this for $k=k_{0}+Nk_{1}$, $k_{0},k_{1}\in\underline{N}$.
The general result follows iteratively. Notice that $\U^{2}\T^{k_{0}+Nk_{1}}\varphi_{j}=\U\T^{k_{1}}\left(\U\T^{k_{0}}\varphi_{j}\right)$.
Consequently, from $\U^{2}\T^{k_{0}+Nk_{1}}\varphi_{j}\neq0$ it follows
that $\U\T^{k_{0}}\varphi_{j}\neq0$. Besides we have that \[
\U\T^{k_{1}}\varphi_{k_{0}}=\U\T^{k_{1}}\sum_{i\in\underline{N}}a_{i}^{k_{0}}\U\T^{k_{0}}\varphi_{i}=\U^{2}\T^{Nk_{1}+k_{0}}\sum_{i\in\underline{N}}a_{i}^{k_{0}}\varphi_{i}\neq0\]
 if $\U^{2}\T^{k_{0}+Nk_{1}}\varphi_{j}\neq0$.

If we assume that $\U\T^{k_{1}}\varphi_{k_{0}}\neq0$ and $\U\T^{k_{0}}\varphi_{j}\neq0$
then\begin{align*}
\U^{2}\T^{Nk_{1}+k_{0}}\varphi_{j} & =\U\T^{k_{1}}\U\T^{k_{0}}\varphi_{j}\\
 & =\left(a_{j}^{k_{0}}\right)^{-1}\U\T^{k_{1}}\left(\varphi_{k_{0}}-\sum_{i\in\underline{N}\backslash\{j\}}a_{i}^{k_{0}}\U\T^{k_{0}}\varphi_{i}\right)\\
 & =\left(a_{j}^{k_{0}}\right)^{-1}\left(\U\T^{k_{1}}\varphi_{k_{0}}-\sum_{i\in\underline{N}\backslash\{j\}}a_{i}^{k_{0}}\U^{2}\T^{Nk_{1}+k_{0}}\varphi_{i}\right)\\
 & \neq0,\end{align*}
since $\U\T^{k_{1}}\varphi_{k_{0}}-\sum_{i\in\underline{N}\backslash\{j\}}a_{i}^{k_{0}}\U^{2}\T^{Nk_{1}+k_{0}}\varphi_{i}=a_{j}^{k_{0}}\U T^{Nk_{1}+k_{0}}\varphi_{j}\neq0$.\end{proof}
\begin{rem}
\label{rem:transition matrix}We can show the same result if for some
$c\in\R$, we have $\U^{(n)}\T^{k}\varphi_{j}=c\left(\U^{(1)}\right)^{n}\T^{k}\varphi_{j}$
for all $n\in\N$, $k\in\underline{N^{n}}$ and $j\in\underline{N}$
and (\ref{eq:einfacheformphi}), where $c$ may depend on $n,k,j$.
\end{rem}
Under the conditions of Proposition \ref{pro:transition matrix} we
can give a $N\times N$ matrix $A$, which coincides with the incidence
matrix in the case of MIM given by $A=\left(A_{ij}\right)_{i,j\in\underline{N}}$
with \[
A_{ij}:=\begin{cases}
0, & \text{if }\,\U\T^{i}\varphi_{j}=0,\\
1, & \text{else.}\end{cases}\]

\section{\label{sec:Construction-of-a}Applications to Markov Interval Maps}

\subsection{Multiresolution Analysis for MIM}

Now we apply the results of Section \ref{sec:Multiresolution-analysis}
to Markov Interval Maps. More precisely, we construct a wavelet basis
on the $L^{2}$-space of a limit set of a Markov Interval Map translated
by $\Z$ with respect to a measure. First we consider the case where
we do not have any relation between the measures of $\nu_{\Z}\left(\left[ij\right]\right)$
and $\nu_{\Z}\left(\left[i\right]\right)$, $\nu_{\Z}\left(\left[j\right]\right)$.
In this case we cannot define only one operator $U$, but on each
scale $n\in\Z$ we consider a different operator $U^{(n)}$. Consequently,
we obtain a family of operators $\left(U^{(n)}\right)_{n\in\Z}$. 

The operators $\left(U^{(n)}\right)_{n\in\Z}$ and $T$ are defined
in (\ref{eq:Def U^n}), (\ref{eq:Def U^-n}) and (\ref{eq:Def T})
respectively. 
\begin{rem}
\label{rem:U(1)U(1) U(2)}$\ $
\begin{enumerate}
\item Notice that in general we have $U^{(1)}U^{(1)}\neq U^{(2)}$ since
the multiplicative constant $\sqrt{\frac{\nu_{\Z}([j])}{\nu_{\Z}([ij])}\frac{\nu_{\Z}([k)}{\nu_{\Z}([kl])}}$
for $U^{(1)}U^{(1)}$ and $\sqrt{\frac{\nu_{\Z}([j])}{\nu_{\Z}([kij])}}$
for $U^{(2)}$ on the cylinder sets may differ. 
\item The operator $T$ is unitary. 
\item The operators $\left(U^{(n)}\right)_{n\in\Z}$ are well defined, namely
for $f\in L^{2}(\nu_{\Z})$ we have $U^{(n)}f\in L^{2}(\nu_{\Z})$. 
\end{enumerate}
\end{rem}
Define the $N$ \textit{father wavelets} as $\varphi_{i}:=\left(\mu([i])\right)^{-1/2}\mathbbm{1}_{[i]}$
for $i\in\underline{N}$.
\begin{rem}
Notice that for $\omega\in\Sigma_{A}^{n}$, $j\in\underline{N}$ and
$k\in\Z$ with $k=\sum_{i=0}^{n-1}\omega_{n-1-i}N^{i}+N^{n}l$, $l\in\Z$,
we have\begin{equation}
\begin{array}{ccc}
U^{(n)}T^{k}\varphi_{j} & = & \begin{cases}
0, & \text{if }A_{\omega_{n-1}j}=0,\\
\left(\nu_{\Z}([\omega j])\right)^{-1/2}T^{l}\mathbbm{1}_{[\omega j]}, & \text{else}.\end{cases}\end{array}\label{eq:funkUnTkphij}\end{equation}

\end{rem}
Now we turn to the proof of the properties of $\left(U^{(n)}\right)_{n\in\Z}$
and $T$ stated in Proposition \ref{pro:The-operatorsUT-1}.
\begin{proof}
[Proof of Proposition  \ref{pro:The-operatorsUT-1}] 

ad (\ref{enu:(1)TU-1}): Let $n\in\N$, $f\in L^{2}(\nu_{\Z})$, $x\in\R$,
then \begin{align*}
 & TU^{(n)}f(x)\\
= & \sum_{k\in\Z}\sum_{\omega\in\Sigma_{A}^{n}}\sum_{j\in\underline{N}}\sqrt{\frac{\nu_{\Z}([j])}{\nu_{\Z}([\omega j])}}\mathbbm{1}_{[\omega j]}(x-1-k)\cdot f\left(\tau_{\omega}^{-1}(x-1-k)+\sum_{i=0}^{n-1}\omega_{n-1-i}N^{i}+N^{n}k\right)\\
= & \sum_{l\in\Z}\sum_{\omega\in\Sigma_{A}^{n}}\sum_{j\in\underline{N}}\sqrt{\frac{\nu_{\Z}([j])}{\nu_{\Z}([\omega j])}}\mathbbm{1}_{[\omega j]}(x-l)\cdot f\left(\tau_{\omega}^{-1}(x-l)+\sum_{i=0}^{n-1}\omega_{n-1-i}N^{i}+N^{n}l-N^{n}\right)\\
= & U^{(n)}T^{N^{n}}f(x).\end{align*}

ad (\ref{enu:(3)TU-1-1}): Let $i\in\underline{N}$, $x\in\R$, then
\begin{align*}
 & \varphi_{i}(x)\\
= & \left(\nu_{\Z}([i])\right)^{-1/2}\sum_{j\in\underline{N}}\mathbbm{1}_{[ij]}(x)\\
= & \sum_{j\in\underline{N}}\sqrt{\frac{\nu_{\Z}([ij])}{\nu_{\Z}([i])}\frac{\nu_{\Z}([j])}{\nu_{\Z}([ij])}}\cdot\left(\mu([j])\right)^{-1/2}\mathbbm{1}_{[ij]}(x)\\
= & \sum_{j\in\underline{N}}\sqrt{\frac{\nu_{\Z}([ij])}{\nu_{\Z}([i])}}\sum_{k\in\underline{N}}\sqrt{\frac{\nu_{\Z}([k])}{\nu_{\Z}([ik])}}\mathbbm{1}_{[ik]}(x)\cdot\varphi_{j}\left(\tau_{i}^{-1}(x)\right)\\
= & U^{(1)}T^{i}\sum_{j\in\underline{N}}\sqrt{\frac{\nu_{\Z}([ij])}{\nu_{\Z}([i])}}\varphi_{j}(x).\end{align*}

ad (\ref{enu:(2)Znt-1}): Notice that for $n\in\N$, $l\in\underline{N}$,
$k\in\Z$, \[
U^{(-n)}\varphi_{l}(x)=\sum_{\omega\in\Sigma_{A}^{n}:\omega_{0}=l}\sum_{j\in\underline{N}}\sqrt{\frac{\nu_{\Z}([\omega j])}{\nu_{\Z}([l])}}\varphi_{j}\left(x-\sum_{i=0}^{n-1}\omega_{n-1-i}N^{i}\right)\]
and \[
U^{(-n)}T^{k}\varphi_{l}(x)=\sum_{\omega\in\Sigma_{A}^{n}:\omega_{0}=l}\sum_{j\in\underline{N}}\sqrt{\frac{\nu_{\Z}([\omega j])}{\nu_{\Z}([l])}}\varphi_{j}\left(x-\sum_{i=0}^{n-1}\omega_{n-1-i}N^{i}-N^{n}k\right).\]
Consequently, $T^{N^{n}k}U^{(-n)}\varphi_{j}=U^{(-n)}T^{k}\varphi_{j}$
for all $k\in\Z$, $n\in\N$, $j\in\underline{N}$. 

ad (\ref{enu:(4)TU-1}): Let $n\in\N$ and $k=\sum_{i=0}^{n-1}\omega_{n-1-i}N^{i}+N^{n}k_{1}$,
$\omega\in\Sigma_{A}^{n}$, $k_{1}\in\Z$ and $l=\sum_{i=0}^{n-1}\tilde{\omega}_{n-1-i}N^{i}+N^{n}l_{1}$,
$\tilde{\omega}\in\Sigma_{A}^{n}$, $l_{1}\in\Z$ and $A_{\omega_{n-1}i}=1$
and $A_{\tilde{\omega}_{n-1}j}=1$ for $i,j\in\underline{N}$ then\begin{align*}
\langle U^{(n)}T^{k}\varphi_{i}|U^{(n)}T^{l}\varphi_{j}\rangle & =\langle\left(\nu_{\Z}([\omega i])\right)^{-1/2}T^{k_{1}}\mathbbm{1}_{[\omega i]}|\left(\nu_{\Z}([\tilde{\omega}j])\right)^{-1/2}T^{l_{1}}\mathbbm{1}_{[\tilde{\omega}j]}\rangle\\
 & =\delta_{k_{1},l_{1}}\delta_{(\omega,i),(\tilde{\omega},j)}.\end{align*}
Otherwise, we have $U^{(n)}T^{k}\varphi_{i}=0$ or $U^{(n)}T^{l}\varphi_{j}=0$.

Furthermore for $n\in\N$, $k,j\in\Z$, $i,m\in\underline{N}$, we
have \begin{align*}
 & \langle U^{(-n)}T^{k}\varphi_{i}|U^{(-n)}T^{l}\varphi_{m}\rangle\\
= & \Big\langle\sum_{\omega\in\Sigma_{A}^{n}:\omega_{0}=i}\sum_{j\in\underline{N}}\sqrt{\frac{\nu_{\Z}([\omega j])}{\nu_{\Z}([i])}}T^{\sum_{i=0}^{n-1}\omega_{n-1-i}N^{i}+N^{n}k}\varphi_{j}\\
 & \ \ |\sum_{\omega\in\Sigma_{A}^{n}:\omega_{0}=m}\sum_{j\in\underline{N}}\sqrt{\frac{\nu_{\Z}([\omega j])}{\nu_{\Z}([m])}}T^{\sum_{i=0}^{n-1}\omega_{n-1-i}N^{i}+N^{n}l}\varphi_{j}\Big\rangle\\
= & \delta_{k,l}\cdot\delta_{i,m}\cdot\sum_{\omega\in\Sigma_{A}^{n}:\omega_{0}=i}\sum_{j\in\underline{N}}\frac{\nu_{\Z}([\omega j])}{\nu_{\Z}([i])}\\
= & \delta_{(k,i),(l,m)},\end{align*}
where we used in the second equality that $\langle T^{k}\varphi_{j}|T^{l}\varphi_{i}\rangle=\delta_{(k,j),(l,i)}$.

ad (\ref{enu:(5)UZ-1}): Let $n\in\N$, $f\in L^{2}(\nu_{\Z})$, $x\in\R$,
then \begin{align*}
 & U^{(n)}U^{(-n)}f(x)\\
= & \sum_{l\in\Z}\sum_{\widetilde{\omega}\in\Sigma_{A}^{n}}\sum_{r\in\underline{N}}\sqrt{\frac{\nu_{\Z}([r])}{\nu_{\Z}([\widetilde{\omega}r])}}\1_{[\widetilde{\omega}r]}(x-l)\sum_{k\in\Z}\sum_{\omega\in\Sigma_{A}^{n}}\sum_{j\in\underline{N}}\sqrt{\frac{\nu_{\Z}([\omega j])}{\nu_{\Z}([j])}}\\
 & \ \ \ \1_{[j]}\left(\tau_{\widetilde{\omega}}^{-1}(x-l)+\sum_{i=0}^{n-1}\widetilde{\omega}_{n-1-i}N^{i}+N^{n}l-\sum_{i=0}^{n-1}\omega_{n-1-i}N^{i}-N^{n}k\right)\\
 & \ \ \ f\left(\tau_{\omega}\left(\tau_{\widetilde{\omega}}^{-1}(x-l)+\sum_{i=0}^{n-1}\widetilde{\omega}_{n-1-i}N^{i}+N^{n}l-\sum_{i=0}^{n-1}\omega_{n-1-i}N^{i}-N^{n}k\right)+k\right)\\
= & \sum_{k\in\Z}\sum_{\omega\in\Sigma_{A}^{n}}\sum_{j\in\underline{N}}\1_{[\omega j]}(x-k)\cdot f(x)\\
= & f(x),\end{align*}
where we used in the third equality that $i=r,$ $\omega=\widetilde{\text{\ensuremath{\omega}}}$
and $k=l$ since otherwise it is zero. 

ad (\ref{enu:(6)ZU-1}): For $n\in\N$, $k\in\Z$, $j\in\underline{N}$,
$x\in\R$, with $U^{(n)}T^{k}\varphi_{j}\neq0$, there is $\omega\in\Sigma_{A}^{n}$,
$l\in\Z$, with $k=\sum_{i=0}^{n-1}\omega_{n-1-i}N^{i}+N^{n}l$ and
so \begin{align*}
U^{(-n)}U^{(n)}T^{k}\varphi_{j}(x) & =U^{(-n)}\left(\left(\nu_{\Z}([\omega j])\right)^{-1/2}T^{l}\mathbbm{1}_{[\omega j]}(x)\right)\\
 & =T^{N^{n}l}U^{(-n)}\left(\left(\nu_{\Z}([\omega j])\right)^{-1/2}\mathbbm{1}_{[\omega j]}(x)\right)\\
 & =T^{N^{n}l}T^{\sum_{i=0}^{n-1}\omega_{n-1-i}N^{i}}\left(\nu_{\Z}([j])\right)^{-1/2}\mathbbm{1}_{[j]}(x)\\
 & =T^{k}\varphi_{j}(x).\end{align*}
\end{proof}
\begin{rem}
We further notice that for $n\in\N$, $x\in\R$, we have $f\in L^{2}(\nu_{\Z})$,
\[
U^{(-n)}U^{(n)}f(x)=\sum_{k\in\Z}\sum_{\omega j\in\Sigma_{A}^{n+1}}\mathbbm{1}_{[j]}\left(x-\sum_{i=0}^{n-1}\omega_{n-1-i}N^{i}-N^{n}k\right)\cdot f(x),\]
and consequently, in general we do not have $U^{(-n)}U^{(n)}=\id$.
\end{rem}
Now we can turn to the proof of Theorem \ref{thm:MRa}. 
\begin{proof}
[Proof of Theorem   \ref{thm:MRa}] We show the properties (\ref{enu:DefMRA2-1-1})
to (\ref{enu:DefMRA5-2}) of Definition \ref{def:MRA allgemein} with
the father wavelets $\varphi_{i}=\left(\nu([i]\right)^{-1/2}\mathbbm{1}_{[i]}$,
$i\in\underline{N}$. 

We define the closed subspaces of $L^{2}(\nu_{\Z})$ for $j\in\N$
as \begin{align*}
V_{0} & :=\cl\spn\left\{ T^{k}\varphi_{i}:\, k\in\Z,\, i\in\underline{N}\right\} ,\\
V_{j} & :=\cl\spn\left\{ U^{(j)}T^{k}\varphi_{i}:\, k\in\Z,\, i\in\underline{N}\right\} .\end{align*}

ad (\ref{enu:DefMRA3-2}): By the definition of $V_{j}$ we obviously
have that $\left\{ U^{(j)}T^{k}\varphi_{i}:k\in\Z,i\in\underline{N}\right\} $
spans $V_{j}$, $j\in\Z$. The orthonormality follows from Proposition
\ref{pro:The-operatorsUT-1} (\ref{enu:(4)TU-1}). 

ad (\ref{enu:DefMRA4-2}): We notice that for $\omega\in\Sigma_{A}^{n}$,
$j\in\underline{N}$ and $k\in\Z$ with $k=\sum_{i=0}^{n-1}\omega_{n-1-i}N^{i}+N^{n}l$,
$l\in\Z$, we have \begin{align*}
U^{(n)}T^{k}\varphi_{j} & =\sum_{i\in\underline{N}}\sqrt{\frac{\nu_{\Z}([\omega ji])}{\nu_{\Z}([\omega j])}}U^{(n+1)}T^{Nk+j}\varphi_{i}.\end{align*}
If there is not such an $\omega\in\Sigma_{A}^{n}$ so that $k=\sum_{i=0}^{n-1}\omega_{n-1-i}N^{i}+N^{n}l$,
$l\in\Z$, then $U^{(n)}T^{k}\varphi_{j}=0$. 

ad (\ref{enu:DefMRA2-1-1}): Notice that for $n\in\N$, $k\in\Z$
and $i\in\underline{N}$ we obtain with Proposition \ref{pro:The-operatorsUT-1}
(\ref{enu:(2)Znt-1}) and (\ref{enu:(3)TU-1-1}) that\begin{align*}
U^{(n)}T^{k}\varphi_{i} & =U^{(n)}T^{k}U^{(1)}T^{i}\sum_{j\in\underline{N}}\sqrt{\frac{\nu_{\Z}([i])}{\nu_{\Z}([ij])}}\varphi_{j}\\
 & =U^{(n)}U^{(1)}T^{Nk+i}\sum_{j\in\underline{N}}\sqrt{\frac{\nu_{\Z}([i])}{\nu_{\Z}([ij])}}\varphi_{j}.\end{align*}
So it follows that $V_{n}\subset V_{n+1}$ by Remark \ref{rem:U(1)U(1) U(2)}
(1). 

ad (\ref{enu:DefMRA2-2-1}): First we notice that $X$ is either totally
disconnected or we can consider $X$ as an interval in $[0,1]$. Furthermore,
every characteristic function on a cylinder $[\omega]\subset\Sigma_{A}$
can be obtained by $U^{(n)}T^{k}\varphi_{j}$, $n\in\N_{0}$, $k\in\Z$,
$j\in\underline{N}$. Thus, we are left to show that $\left\{ T^{k}\mathbbm{1}_{[\omega]}:\, k\in\Z,\,\omega\in\Sigma_{A}^{*}\right\} $
is dense in $L^{2}(\nu_{\Z})$. 

If $X$ is totally disconnected, it follows by the Stone-Weierstrass
Theorem that \[
\left\{ T^{k}\mathbbm{1}_{[\omega]}:\, k\in\Z,\,\omega\in\Sigma_{A}^{*}\right\} \]
 is dense in $C(R,\mathbb{C})$, see e.g. \cite{KeStaStr07}. Besides
it is well known that $C(R,\mathbb{C})$ is dense in $L^{2}(\nu_{\Z})$
and so $\cl\spn\left\{ T^{k}\mathbbm{1}_{[\omega]}:\, k\in\Z,\,\omega\in\Sigma_{A}^{*}\right\} =L^{2}(\nu_{\Z})$. 

If $X=[a,b]$, notice that every interval $I\subset[0,1]$ can be
approximated by $\tau_{\omega}(X)$, $\omega\in\Sigma_{A}^{*}$. Hence
$\tau_{\omega}(X)$, $\omega\in\Sigma_{A}^{*}$, generates $\mathcal{B}$,
thus every element $A\in\mathcal{B}$ can be approximated by elements
of $\left\{ \tau_{\omega}(X):\,\omega\in\Sigma_{A}^{*}\right\} $.
Consequently, every elementary function can be approximated by functions
$\mathbbm{1}_{\tau_{\omega}(X)}$ and so all functions in $L^{2}(\nu_{\Z})$
can be approximated by elements of \[
\left\{ T^{k}\mathbbm{1}_{[\omega]}:\,\omega\in\Sigma_{A}^{*},k\in\Z\right\} =\left\{ U^{(n)}T^{l}\varphi_{i}:\, n\in\N_{0},\, l\in\Z,\, i\in\underline{N}\right\} .\]
Consequently, $\cl\bigcup_{k\in\N}V_{n}=L^{2}(\nu_{\Z})$ .

ad (\ref{enu:DefMRA5-2}): This follows from Proposition \ref{pro:The-operatorsUT-1}
(1) and (2).
\end{proof}
Next we prove the forward direction of Theorem \ref{thm:MRA-1}. The
backward direction will be shown in Section \ref{sub:MRA-for-Markov}.
\begin{proof}
[Proof of Theorem \ref{thm:MRA-1}  {}``$\Longrightarrow$'']

We assume that $\left(\nu_{\Z},\left(U^{(n)}\right)_{n\in\Z},T\right)$
allows a two-sided MRA with the father wavelets $\varphi_{i}=\left(\nu_{\Z}([i]\right)^{-1/2}\mathbbm{1}_{[i]}$.
Then in particular, it holds by (\ref{enu:DefMRA4-2}) of Definition
\ref{def:MRA allgemein} that for $n\in\N$ \[
U^{(-n)}\left\{ \varphi_{i}:i\in\underline{N}\right\} \subset\spn U^{(-n+1)}\left\{ T^{k}\varphi_{i}:i\in\underline{N},k\in\underline{N}\right\} .\]
We further notice that for $n\in\N$, $k,i\in\underline{N}$, \[
U^{(-n)}\varphi_{k}=\sum_{\omega\in\Sigma_{A}^{n}:\omega_{0}=k}\sum_{j\in\underline{N}}\sqrt{\frac{\nu_{\Z}([\omega j])}{\nu_{\Z}([k])}}T^{\sum_{l=0}^{n-1}\omega_{n-1-l}N^{l}}\varphi_{j}\]
 and \[
U^{(-n+1)}T^{k}\varphi_{i}=\sum_{\omega\in\Sigma_{A}^{n-1}:\omega_{0}=i}\sum_{j\in\underline{N}}\sqrt{\frac{\nu_{\Z}([\omega j])}{\nu_{\Z}([i])}}T^{\sum_{l=0}^{n-2}\omega_{n-2-l}N^{l}+N^{n-1}k}\varphi_{j}.\]
From the precise from of $U^{(-n)}\varphi_{k}$ and $U^{(-n+1)}T^{m}\varphi_{i}$,
$n\in\N$, $k,m,i\in\underline{N}$, it follows that $\langle U^{(-n)}\varphi_{k}|U^{(-n+1)}T^{m}\varphi_{i}\rangle\neq0$
only if $m=k$ since\begin{align*}
 & \langle U^{(-n)}\varphi_{k}|U^{(-n+1)}T^{m}\varphi_{i}\rangle\\
= & \langle\sum_{\omega\in\Sigma_{A}^{n}:\omega_{0}=k}\sum_{j\in\underline{N}}\sqrt{\frac{\nu_{\Z}([\omega j])}{\nu_{\Z}([k])}}T^{\sum_{l=0}^{n-1}\omega_{n-1-l}N^{l}}\varphi_{j}\\
 & \ \ \ |\sum_{\omega\in\Sigma_{A}^{n-1}:\omega_{0}=i}\sum_{j\in\underline{N}}\sqrt{\frac{\nu_{\Z}([\omega j])}{\nu_{\Z}([i])}}T^{\sum_{l=0}^{n-2}\omega_{n-2-l}N^{l}+N^{n-1}m}\varphi_{j}\rangle\\
= & \sum_{\omega\in\Sigma_{A}^{n}:\omega_{0}=k}\sum_{j_{1}\in\underline{N}}\sum_{\widetilde{\omega}\in\Sigma_{A}^{n-1}:\widetilde{\omega}_{0}=i}\sum_{j_{2}\in\underline{N}}\sqrt{\frac{\nu_{\Z}([\omega j_{1}])}{\nu_{\Z}([k])}}\sqrt{\frac{\nu_{\Z}([\widetilde{\omega}j_{2}])}{\nu_{\Z}([i])}}\\
 & \ \ \ \ \ \langle T^{\sum_{l=0}^{n-1}\omega_{n-1-l}N^{l}}\varphi_{j_{1}}|T^{\sum_{l=0}^{n-2}\widetilde{\omega}_{n-2-l}N^{l}+N^{n-1}m}\varphi_{j_{2}}\rangle\\
= & \delta_{m,k}\sum_{j\in\underline{N}}\sum_{\omega\in\Sigma_{A}^{n-1}:\omega_{0}=i}\sqrt{\frac{\nu_{\Z}([k\omega j])}{\nu_{\Z}([k])}}\sqrt{\frac{\nu_{\Z}([\omega j])}{\nu_{\Z}([i])}},\end{align*}
where we used in the third equality the property of Proposition \ref{pro:The-operatorsUT-1}
(\ref{enu:(4)TU-1}), namely $\langle T^{k}\varphi_{j_{1}}|T^{l}\varphi_{j_{2}}\rangle=\delta_{(k,j_{1}),(l,j_{2})}$
for any $k,l\in\Z$ and $j_{1},j_{2}\in\underline{N}$.

As a consequence of (\ref{enu:DefMRA3-2}), (\ref{enu:DefMRA4-2})
of Definition \ref{def:MRA allgemein} and the observation above it
follows that for every $n\in\N$, $k\in\underline{N}$ there exist
unique $\left(\alpha_{i}^{n,k}\right)_{i\in\underline{N}}\in\C^{N}$
such that\begin{align*}
U^{(-n)}\varphi_{k} & =\sum_{i\in\underline{N}}\alpha_{i}^{n,k}U^{(-n+1)}T^{k}\varphi_{i}\\
 & =\sum_{i\in\underline{N}}\alpha_{i}^{n,k}\sum_{\omega\in\Sigma_{A}^{n-1}:\omega_{0}=i}\sum_{j\in\underline{N}}\sqrt{\frac{\nu_{\Z}([\omega j])}{\nu_{\Z}([i])}}T^{\sum_{l=0}^{n-2}\omega_{n-2-l}N^{l}+N^{n-1}k}\varphi_{j}.\end{align*}
On the other hand, from the precise form of $U^{(-n)}\varphi_{k}$
it follows that 

\begin{align*}
U^{(-n)}\varphi_{k} & =\sum_{\omega\in\Sigma_{A}^{n}:\omega_{0}=k}\sum_{j\in\underline{N}}\sqrt{\frac{\nu_{\Z}([\omega j])}{\nu_{\Z}([k])}}T^{\sum_{l=0}^{n-1}\omega_{n-1-l}N^{l}}\varphi_{j}\\
 & =\sum_{\omega\in\Sigma_{A}^{n-1}}\sum_{j\in\underline{N}}\sqrt{\frac{\nu_{\Z}([k\omega j])}{\nu_{\Z}([k])}}T^{\sum_{l=0}^{n-2}\omega_{n-2-l}N^{l}+N^{n-1}k}\varphi_{j}\\
 & =\sum_{i\in\underline{N}}\sum_{\omega\in\Sigma_{A}^{n-1}:\omega_{0}=i}\sum_{j\in\underline{N}}\sqrt{\frac{\nu_{\Z}([k\omega j])}{\nu_{\Z}([k])}}T^{\sum_{l=0}^{n-2}\omega_{n-2-l}N^{l}+N^{n-1}k}\varphi_{j}.\end{align*}
By comparing the coefficients it follows that for every $\omega\in\Sigma_{A}^{n-1}$,
$\omega_{0}=i$, we have $\alpha_{i}^{n,k}\sqrt{\frac{\nu_{\Z}([\omega j])}{\nu_{\Z}([i])}}=\sqrt{\frac{\nu_{\Z}([k\omega j])}{\nu_{\Z}([k])}}$.
Consequently, $\alpha_{i}^{n,k}\in\R^{+}$ and \[
\nu_{\Z}([k\omega j])=\nu_{\Z}([\omega j])\left(\alpha_{i}^{n,k}\right)^{2}\frac{\nu_{\Z}([k])}{\nu_{\Z}([i])}.\]
Now it remains to be shown that $c_{i}^{n,k}$ are independent of
$n\in\N$. For $n\in\N$, $\omega\in\Sigma_{A}^{n}$ with $\omega_{0}=i$
and $k\in\underline{N}$ it follows that \begin{align*}
\nu_{\Z}([k\omega]) & =\sum_{j\in\underline{N}}\nu_{\Z}([k\omega j])=\sum_{j\in\underline{N}}\nu_{\Z}([\omega j])\frac{\left(\alpha_{i}^{n,k}\right)^{2}\nu_{\Z}([k])}{\nu_{\Z}([i])}\\
 & =\nu_{\Z}([\omega])\frac{\left(\alpha_{i}^{n,k}\right)^{2}\nu_{\Z}([k])}{\nu_{\Z}([i])}.\end{align*}
On the other hand we can write $\omega\in\Sigma_{A}^{n}$ with $\omega_{0}=i$
as $\omega=\widetilde{\omega}\omega_{n-1}$ for a suitable $\widetilde{\omega}\in\Sigma_{A}^{n-1}$,
$\widetilde{\omega}_{0}=i$, and so \begin{align*}
\nu_{\Z}([k\omega]) & =\nu_{\Z}([k\widetilde{\omega}\omega_{n-1}])=\nu_{\Z}([\widetilde{\omega}\omega_{n-1}])\frac{\left(\alpha_{i}^{n-1,k}\right)^{2}\nu_{\Z}([k])}{\nu_{\Z}([i])}\\
 & =\nu_{\Z}([\omega])\frac{\left(\alpha_{i}^{n-1,k}\right)^{2}\nu_{\Z}([k])}{\nu_{\Z}([i])}.\end{align*}
Thus, $\alpha_{i}^{n-1,k}=\alpha_{i}^{n,k}$ and so $\alpha_{i}^{n,k}=\alpha_{i}^{m,k}$
for all $n,m\in\N$, $k,i\in\underline{N}$. In the following we write
$\alpha_{i}^{k}$ for $\alpha_{i}^{n,k}$. 

Define $\kappa_{k,i}:=\left(\alpha_{i}^{k}\right)^{2}\nu_{\Z}([k])/\nu_{\Z}([i])$
for $k,i\in\underline{N}$, then we have $\nu_{\Z}([k\omega j])=\kappa_{k,\omega_{0}}\nu_{\Z}([\omega j])$
for all $\omega\in\Sigma_{A}^{*}$, $j,k\in\underline{N}$. From this
property we conclude the Markov relation since for any $k,i\in\underline{N}$
\[
\nu([ki])=\sum_{j\in\underline{N}}\nu([kij])=\sum_{j\in\underline{N}}\kappa_{k,i}\nu([ij])=\kappa_{k,i}\nu([i])\]
and so\[
\nu([ki])=\nu([k])\frac{\kappa_{k,i}\nu([i])}{\nu([k])}.\]
Define $\pi_{ki}:=\kappa_{k,i}\nu([i])/\nu([k])=\left(\alpha_{i}^{k}\right)^{2}$,
then $\pi_{ki}$ is a incidence probability. Consequently, we have
that if a two-sided MRA holds then the measure $\nu$ is Markovian.
The reversed implication will be shown in Section \ref{sub:MRA-for-Markov}.
\end{proof}

\subsection{Mother wavelets for MIM }

In this section we are in the case of Remark \ref{rem:darstellung phi}
(2) and so we consider for each father wavelet $\varphi_{i}$, $i\in\underline{N}$,
a matrix of coefficients; more precisely on each scale we have to
consider for each element of the alphabet $\underline{N}$ a matrix
of coefficients. We slightly change the notation from $c_{j}^{n,k,l}$
to $c_{j}^{\omega,l}$ for $\omega\in\Sigma_{A}^{n}$, since the information
about $n$ and $k$ are coded; $n$ is given by the length of a word
and $k=\sum_{i=0}^{n-1}\omega_{n-1-i}N^{i}$. 

For $\omega\in\Sigma_{A}^{n+1}$ we need a matrix of size $q^{\omega_{n}}\times q^{\omega_{n}}$,
where $q^{\omega_{n}}=\card\{j\in\underline{N}:\, A_{\omega_{n}j}=1\}$.
First we determine $c_{j}^{\omega,k}\in\C$, $j\in\underline{N}$,
$k\in\underline{q^{\omega_{n}}}\backslash\{0\}$, such that the $\left(q^{\omega_{n}}\times q^{\omega_{n}}\right)$-matrix
\[
M_{\omega}:=\left(\begin{array}{c}
\left(\sqrt{\nu_{\Z}([\omega j])}\right)_{j\in D_{\omega_{n}}}\\
\left(A_{\omega_{n}j}c_{j}^{\omega,k}\right)_{k\in\underline{q^{\omega_{n}}}\backslash\{0\},j\in D_{\omega_{n}}}\end{array}\right),\]
where $D_{\omega_{n}}=\left\{ j\in\underline{N}:A_{\omega_{n}j}=1\right\} $
is unitary. This is done as explained above via the Gram-Schmidt process. 

We define for $\omega\in\Sigma_{A}^{n+1}$, $k=\sum_{i=0}^{n}\omega_{n-i}N^{i}$
the basis functions as: for\textbf{ $l\in\underline{q^{\omega_{n}}}\backslash\{0\}$}
\[
\psi^{\omega,l}=U^{(n)}T^{k}\sum_{j\in\underline{N}}A_{\omega_{n}j}c_{j}^{\omega,l}\varphi_{j}.\]

These functions can be written differently for $\omega\in\Sigma_{A}^{n+1}$,
$k\in\underline{q^{\omega_{n}}}\backslash\{0\}$, as 

\[
\psi^{\omega,k}=\sum_{j\in\underline{N}}A_{\omega_{n}j}c_{j}^{\omega,k}\cdot\left(\nu_{\Z}([\omega j])\right)^{-1/2}\cdot\mathbbm{1}_{[\omega j]}.\]
From Theorem \ref{thm:MRA} and Theorem \ref{thm:MRa} the following
corollary follows. 
\begin{cor}
\label{pro:The-orthonormal-basistwo sided}An orthonormal basis for
$L^{2}(\nu_{\Z})$ is given by\[
\left\{ T^{l}\psi^{\omega,k}:\, l\in\Z,\,\omega\in\Sigma_{A}^{*},\, k\in\{1,\dots,q^{\omega_{|\omega|-1}}-1\}\right\} \cup\left\{ T^{l}\varphi_{j}:l\in\Z,j\in\underline{N}\right\} .\]
\end{cor}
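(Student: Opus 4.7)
The plan is to derive the corollary by combining Corollary \ref{cor:onesided} with Theorem \ref{thm:MRa}: the latter verifies that $(\nu_{\Z},(U^{(n)})_{n\in\N_0},T)$ is a one-sided MRA with the stated father wavelets, and the former then produces an orthonormal basis of $L^2(\nu_{\Z})$ consisting of $\Z$-translates of mother wavelets $\psi_{n,l}$ together with $\{T^k\varphi_i:k\in\Z,\,i\in\underline{N}\}$. The remaining task is to identify these abstract mother wavelets with the explicit functions $\psi^{\omega,k}$.

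The crucial structural input is translation-completeness of the father wavelets: Proposition \ref{pro:The-operatorsUT-1}(\ref{enu:(3)TU-1-1}) gives condition (\ref{eq:einfacheformphi}), and the refinement identity established in the proof of Theorem \ref{thm:MRa} under (\ref{enu:DefMRA4-2}) states that for $k=\sum_{i=0}^{n-1}\omega_{n-1-i}N^i+N^n l$ with $\omega\in\Sigma_A^n$ one has $U^{(n)}T^k\varphi_j=\sum_{i\in\underline{N}}\sqrt{\nu_{\Z}([\omega ji])/\nu_{\Z}([\omega j])}\,U^{(n+1)}T^{Nk+j}\varphi_i$, which is precisely the stronger form (\ref{eq:simpleform Un}) needed in Remark \ref{rem:darstellung phi}(2). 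As pointed out there, the Gram-Schmidt construction on scale $n+1$ then decouples into one independent unitary block per admissible word $\omega\in\Sigma_A^{n+1}$.

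For each such $\omega$ this block is exactly $M_\omega$: its first row (upon rescaling by $\nu_{\Z}([\omega])^{-1/2}$) is the refinement vector $(\sqrt{\nu_{\Z}([\omega j])/\nu_{\Z}([\omega])})_{j\in D_{\omega_n}}$ normalised to unit length, and the remaining $q^{\omega_n}-1$ rows $(c_j^{\omega,k})_j$ extend this to a unitary matrix via Gram-Schmidt. The corresponding mother wavelets $\psi^{\omega,k}$ then coincide, under the re-indexing $(n,k)\leftrightarrow\omega$, with those produced by Corollary \ref{cor:onesided}, and their concrete scaled-indicator expression follows from (\ref{eq:funkUnTkphij}).

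The main point that still needs attention is the verification that this block-wise construction yields an orthonormal basis of $W_n=V_{n+1}\ominus V_n$ on each scale. Within a fixed $\omega$, orthonormality of $\{T^l\psi^{\omega,k}\}_{l,k}$ reduces to unitarity of $M_\omega$ combined with Proposition \ref{pro:The-operatorsUT-1}(\ref{enu:(4)TU-1}). Across distinct $\omega,\omega'\in\Sigma_A^{n+1}$ at the same scale, orthogonality is immediate from disjointness of the supports $[\omega]+\Z$ and $[\omega']+\Z$, again via (\ref{eq:funkUnTkphij}). Completeness $V_{n+1}=V_n\oplus\overline{\spn}\{T^l\psi^{\omega,k}\}_{\omega,k,l}$ follows from the unitarity of each $M_\omega$ applied to the refinement identity. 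Together with (\ref{enu:DefMRA2-2-1}) of the one-sided MRA, that is $\cl\bigcup_n V_n=L^2(\nu_{\Z})$, this yields the asserted orthonormal basis.
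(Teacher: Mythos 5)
Your proposal is correct and follows essentially the same route as the paper: the paper likewise obtains this corollary by feeding Theorem \ref{thm:MRa} into the abstract basis construction (Theorem \ref{thm:MRA} / Corollary \ref{cor:onesided}) and identifying the abstract Gram--Schmidt completion with the block-diagonal choice given by the unitary matrices $M_{\omega}$, exactly as you describe via (\ref{eq:simpleform Un}) and Remark \ref{rem:darstellung phi}(2). Your explicit verification of within-block orthonormality, cross-block orthogonality from disjoint supports, and the completeness step $V_{n+1}=V_{n}\oplus W_{n}$ matches the content of the remark preceding the corollary in the paper.
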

\begin{rem}
In fact, the proofs of Theorem \ref{thm:MRA} and Theorem \ref{thm:MRa}
show that we have for $n\in\N$ \[
\cl\spn\left\{ T^{l}\psi^{\omega,k}:\, l\in\Z,\,\omega\in\Sigma_{A}^{n},\, k\in\{1,\dots,q^{\omega_{n-1}}-1\}\right\} =V_{n}\ominus V_{n-1}.\]

\end{rem}

\subsection{\label{sub:MRA-for-Markov}MRA for a Markov measures}

In this section we construct a wavelet basis on the limit set translated
by $\Z$ where the underlying measure $\nu$ is Markovian. For this
fix a probability vector $p=\left(p_{0},p_{1},\dots,p_{N-1}\right)$
and a $\left(N\times N\right)$ stochastic matrix $\Pi=\left(\pi_{jk}\right)_{j,k\in\underline{N}}$
such that for $\omega\in\Sigma_{A}^{n}$ we have \[
\nu([\omega])=p_{\omega_{0}}\prod_{i=0}^{n-2}\pi_{\omega_{i}\omega_{i+1}}.\]
Furthermore, we have that $\pi_{jk}=0$ if $A_{jk}=0$.

This is a special case of the one in the last section. Therefore,
we omit some proofs here and mainly state the results, so that the
differences become clear. 

In this construction we only have to define one operator $U$ since
we obtain $U^{(n)}$ by $U^{n}$, i.e. by iteration of $U$. Another
main difference is that we do not need one matrix for every $\omega\in\Sigma_{A}^{*}$
to obtain the mother wavelets, but we only need matrices for $\omega\in\Sigma_{A}^{1}=\underline{N}$.
So we need not more than $N^{2}$ matrices. This follows from Lemma
\ref{lem:Wn}.

The setting is as defined in Section \ref{sec:Setting-(IFS-with}\textcolor{black}{.}\textcolor{red}{{}
}Set $U:=U^{(1)}$and so it takes the form in (\ref{eq:def U}). By
the Markov property we have $\frac{\nu_{\Z}([i])}{\nu_{\Z}([ji])}=\frac{p_{i}}{p_{j}\pi_{ji}}$
and hence one easily verifies that $U^{(n)}=U^{n}$. Also notice that
$U$ is not unitary\textcolor{red}{{} }\textcolor{black}{unless}\textcolor{red}{{}
}we have that $A_{ij}=1$ for all $i,j\in\underline{N}$.

Now we turn to the form of $U^{*}$.
\begin{lem}
$U^{*}$ has the form\begin{equation}
U^{*}f(x)=\sum_{k\in\Z}\sum_{j\in\underline{N}}\sum_{i\in\underline{N}}\sqrt{\frac{p_{j}\pi_{ji}}{p_{i}}}\cdot\mathbbm{1}_{[i]}(x-j-Nk)\cdot f(\tau_{j}(x-j-Nk)+k).\label{eq:U*}\end{equation}
\end{lem}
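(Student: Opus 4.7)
My plan is to verify the formula by directly computing the adjoint via the defining identity $\langle Uf,g\rangle=\langle f,U^{*}g\rangle$ for all $f,g\in L^{2}(\nu_{\Z})$. The key tool is the change-of-variables formula for the Markov measure $\nu$ under the inverse branches $\tau_{j}$.

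First, I would record the following ``Jacobian'' identity which is a direct consequence of the Markov property $\nu([ji\omega])=p_{j}\pi_{ji}\cdot\nu([i\omega])/p_{i}$: for any bounded measurable $\phi$ supported on $[ji]$ with $A_{ji}=1$,
\[
\int_{[ji]}\phi(y)\,d\nu(y)=\frac{p_{j}\pi_{ji}}{p_{i}}\int_{[i]}\phi(\tau_{j}(z))\,d\nu(z).
\]
This is verified on cylinder indicators $\1_{[ji\omega]}$ (where both sides evaluate to $p_{j}\pi_{ji}\pi_{i\omega_{0}}\cdots$) and then extended by the usual monotone-class argument.

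Next I would plug the definition of $U$ into $\langle Uf,g\rangle$ and exchange sum and integral:
\begin{align*}
\langle Uf,g\rangle & =\sum_{k,j,i}\sqrt{\tfrac{p_{i}}{p_{j}\pi_{ji}}}\int\1_{[ji]}(x-k)\,f(\tau_{j}^{-1}(x-k)+j+Nk)\,\overline{g(x)}\,d\nu_{\Z}(x).
\end{align*}
Now apply translation invariance of $\nu_{\Z}$ (substitute $x=y+k$), then the Jacobian identity above (substitute $y=\tau_{j}(z)$ with $z\in[i]$) to obtain
\[
\langle Uf,g\rangle=\sum_{k,j,i}\sqrt{\tfrac{p_{j}\pi_{ji}}{p_{i}}}\int_{[i]}f(z+j+Nk)\,\overline{g(\tau_{j}(z)+k)}\,d\nu(z).
\]
Finally, I use translation invariance once more (substitute $w=z+j+Nk$) to move the $\1_{[i]}$ factor onto the $g$-side:
\[
\langle Uf,g\rangle=\int f(w)\,\overline{\sum_{k,j,i}\sqrt{\tfrac{p_{j}\pi_{ji}}{p_{i}}}\,\1_{[i]}(w-j-Nk)\,g(\tau_{j}(w-j-Nk)+k)}\,d\nu_{\Z}(w),
\]
since the coefficients are real and positive. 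Reading off the expression inside the conjugate yields exactly the claimed formula for $U^{*}$.

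The only non-routine step is the change-of-variables identity, which rests on the Markov factorization of $\nu$; once that is in place the rest is just careful bookkeeping of the three nested sums and the two translations. A small point to keep in mind is that whenever $A_{ji}=0$ one has $\1_{[ji]}\equiv 0$ and $\pi_{ji}=0$, so the convention $0^{-1}\cdot\1_{\emptyset}=0$ from Section \ref{sec:Setting-(IFS-with} makes all vanishing terms unambiguous and the same vanishing pattern appears on both sides.
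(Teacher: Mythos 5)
Your proposal is correct and follows essentially the same route as the paper: the paper likewise computes $\langle Uf|g\rangle$ using the $\Z$-translation invariance of $\nu_{\Z}$ together with the Radon--Nikodym derivative $\frac{d\nu_{\Z}\circ\tau_{j}}{d\nu_{\Z}}=\frac{p_{j}\pi_{ji}}{p_{i}}$ on $[i]$ (verified on cylinder sets via the Markov factorization), which is exactly your ``Jacobian'' identity. The remaining substitutions and bookkeeping match the paper's computation step for step.
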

\begin{rem}
Notice that $U^{*}=U^{(-1)}$ and $\left(U^{*}\right)^{n}=U^{(-n)}$.\end{rem}
\begin{proof}
To prove that $U^{*}$ has the form above we use the $\Z$-translation
invariance of the measure $\nu_{\Z}$ and the fact that $\frac{d\nu_{\Z}\circ\tau_{j}}{d\nu_{\Z}}=\frac{p_{j}\pi_{ji}}{p_{i}}$
on $[i]$. We obtain this Radon-Nikodym derivative since for a cylinder
set $[\omega]$, $\omega\in\Sigma_{A}^{*}$, we have \begin{align*}
\nu_{\Z}\left(\tau_{j}([\omega])\right)=p_{j}\pi_{j\omega_{0}}\prod_{i=0}^{n}\pi_{\omega_{i}\omega_{i+1}}\end{align*}
and $\nu_{\Z}([\omega])=p_{\omega_{0}}\prod_{i=0}^{n}\pi_{\omega_{i}\omega_{i+1}}$. 

Consequently, we obtain that for $f,g\in L^{2}(\nu_{\Z})$ \begin{align*}
 & \langle Uf|g\rangle\\
= & \int\sum_{k\in\Z}\sum_{j\in\underline{N}}\sum_{i\in\underline{N}}\sqrt{\frac{p_{i}}{p_{j}\pi_{ji}}}\cdot\mathbbm{1}_{[ji]}(x-k)\cdot f(\tau_{j}^{-1}(x-k)+j+Nk)\overline{g(x)}d\nu_{\Z}(x)\\
= & \int\sum_{k\in\Z}\sum_{j\in\underline{N}}\sum_{i\in\underline{N}}\sqrt{\frac{p_{i}}{p_{j}\pi_{ji}}}\cdot\mathbbm{1}_{[ji]}(x)\cdot f(\tau_{j}^{-1}(x)+j+Nk)\overline{g(x+k)}d\nu_{\Z}(x)\\
= & \int\sum_{k\in\Z}\sum_{j\in\underline{N}}\sum_{i\in\underline{N}}\sqrt{\frac{p_{i}}{p_{j}\pi_{ji}}}\cdot\mathbbm{1}_{[ji]}(\tau_{j}(x))\cdot f((x)+j+Nk)\overline{g(\tau_{j}(x)+k)}d\nu_{\Z}(\tau_{j}(x))\\
= & \int\sum_{k\in\Z}\sum_{j\in\underline{N}}\sum_{i\in\underline{N}}\sqrt{\frac{p_{i}}{p_{j}\pi_{ji}}}\cdot\mathbbm{1}_{[ji]}(\tau_{j}(x))\cdot f((x)+j+Nk)\overline{g(\tau_{j}(x)+k)}\cdot\frac{p_{j}\pi_{ji}}{p_{i}}\cdot d\nu_{\Z}(x)\\
= & \int f(x)\sum_{k\in\Z}\sum_{j\in\underline{N}}\sum_{i\in\underline{N}}\sqrt{\frac{p_{j}\pi_{ji}}{p_{i}}}\cdot\mathbbm{1}_{[i]}(x-j-Nk)\cdot\overline{g(\tau_{j}(x-j-Nk)+k)}d\nu_{\Z}(x)\\
= & \langle f|U^{*}g\rangle,\end{align*}
with $U^{*}g$ as in (\ref{eq:U*}).
\end{proof}
Now we turn to the definition of the father wavelets which we use
in the MRA. Define the $N$ \textit{father wavelets} as $\varphi_{i}=\left(\nu_{\Z}([i])\right)^{-1/2}\mathbbm{1}_{[i]}$
for $i\in\underline{N}$. 
\begin{rem}
Notice that the family of father wavelets $\left(\varphi_{i}\right)_{i\in\underline{N}}$
is orthonormal by definition.
\end{rem}
Now we turn to the properties of the operators $U$ and $T$ given
in Proposition \ref{pro:eigenschaften UT-1}.
\begin{proof}
[Proof of Proposition  \ref{pro:eigenschaften UT-1}]We have that
(\ref{enu:Prop U (1)}), (\ref{enu:,Prop U (2)}), (\ref{enu:,Prop U (3)})
and (\ref{enu:,Prop U (4)}) follow directly from Proposition \ref{pro:The-operatorsUT-1}
since it is a special case of $U^{(n)}$ in the section above. 

ad (\ref{enu:-Prop U (5)}): This proof is analogous to the one of
(\ref{enu:,Prop U (4)}) or Proposition \ref{pro:The-operatorsUT-1}
(\ref{enu:(5)UZ-1}). We obtain for $f\in L^{2}(\nu_{\Z})$ \begin{align*}
U^{*}Uf(x) & =\sum_{k\in\Z}\sum_{j\in\underline{N}}A_{ji}\mathbbm{1}_{[i]}(x-j-Nk)\cdot f(x).\end{align*}
$\sum_{k\in\Z}\sum_{j\in\underline{N}}\sum_{i\in\underline{N}}A_{ji}\mathbbm{1}_{[i]}(x-j-Nk)=1$
for all $x\in\R$ if and only if $A_{ji}=1$ for all $i,j\in\underline{N}$. 
\end{proof}
Now we turn to the proof of the backward direction of Theorem \ref{thm:MRA-1}.
Some of the properties follow directly from the proof of Theorem \ref{thm:MRa}.
\begin{proof}
[Proof of Theorem \ref{thm:MRA-1} ''$\ensuremath{\Longleftarrow}$'']
We show the properties (\ref{enu:DefMRA1-1}) to (\ref{enu:DefMRA6-1})
of Definition \ref{def:MRA allgemein}. The property (\ref{enu:DefMRA2-1})
follows from Theorem \ref{thm:MRa}. 

ad (\ref{enu:-DefMRA5-1}): For $n\in\N_{0}$ it follows directly
from Theorem \ref{thm:MRa}. For $n\in\Z$, $n<0$, $x\in\R$, $k\in\underline{N}$,
it follows by \begin{align*}
 & \left(U^{*}\right)^{|n|}\varphi_{k}(x)\\
= & \sum_{\omega\in\Sigma_{A}^{|n|}:\omega_{0}=k}\sum_{i\in\underline{N}}\sqrt{\prod_{l=1}^{|n|-2}\pi_{\omega_{l},\omega_{l+1}}\cdot\pi_{k,\omega_{1}}\pi_{\omega_{|n|-1},i}}\varphi_{i}\left(x-\sum_{l=0}^{|n|-1}\omega_{|n|-1-l}N^{l}\right)\\
= & \sum_{j\in\underline{N}}\sqrt{\pi_{k,j}}\Bigg(\sum_{\omega\in\Sigma_{A}^{|n|-1}:\omega_{0}=j}\sum_{i\in\underline{N}}\sqrt{\prod_{l=1}^{|n|-3}\pi_{\omega_{l},\omega_{l+1}}\cdot\pi_{j,\omega_{1}}\pi_{\omega_{|n|-2},i}}\\
 & \ \ \ \varphi_{i}\left(x-\sum_{l=0}^{|n|-2}\omega_{|n|-2-l}N^{l}-kN^{|n|-1}\right)\Bigg)\\
= & \sum_{j\in\underline{N}}\sqrt{\pi_{k,j}}\left(U^{*}\right)^{|n|-1}T^{k}\varphi_{j}(x).\end{align*}

ad (\ref{enu:DefMRA1-1}): For $n\in\N_{0}$ it follows directly from
Theorem \ref{thm:MRa}. For $n\in\Z$, $n<0$, $k\in\underline{N}$,
it follows from \[
\left(U^{*}\right)^{|n|}\varphi_{k}=\sum_{j\in\underline{N}}\sqrt{\pi_{k,j}}\left(U^{*}\right)^{|n|-1}T^{k}\varphi_{j}.\]

ad (\ref{enu:DefMRA3-1}): We have that $\bigcap_{n\in\Z}V_{n}=\left\{ 0\right\} ,$
because the support of $\left(U^{*}\right)^{n}\varphi_{j}$, $j\in\underline{N}$,
grows in $n\in\N$. More precisely, for $j\in\underline{N}$ \[
\nu_{\Z}\left(\supp\left(\left(U^{*}\right)^{n}\varphi_{j}\right)\right)=\sum_{i\in\underline{N}}\nu_{\Z}\left([i]\right)\left(\card\left\{ \omega\in\Sigma_{A}^{n+1}:\omega_{0}=j,\omega_{n}=i\right\} \right).\]
Consequently, $\{0\}=\bigcap_{j\in\Z}V_{j}$ since any function $f\in\bigcap_{j\in\Z}V_{j}$
must be constant for every $n\in\N$ on $\supp\left(\left(U^{*}\right)^{n}\varphi_{j}\right)$,
for $j\in\underline{N}$.

ad (\ref{enu:DefMRA4-1}): This property follows directly from the
definition of the spaces $V_{j}$ and Proposition \ref{pro:The-operatorsUT-1}
(\ref{enu:(4)TU-1}) with the observation that $U^{(n)}=U^{n}$ and
$U^{(-n)}=\left(U^{*}\right)^{n}$, $n\in\N_{0}$.

ad (\ref{enu:DefMRA6-1}): This property follows from Proposition
\ref{pro:eigenschaften UT-1} (\ref{enu:,Prop U (4)}) and (\ref{enu:-Prop U (5)}).\end{proof}
\begin{rem}
Now we give some remarks concerning the father wavelets. 
\begin{enumerate}
\item The relation for the functions $\varphi_{i}$, $i\in\underline{N}$,
can also be written as\[
\left(\varphi_{j}\right)_{j\in\underline{N}}^{t}=\sum_{l\in\underline{N}}M_{l}\left(UT^{l}\varphi_{j}\right)_{j\in\underline{N}},\]
where the $M_{l}$ are $\left(N\times N\right)$-matrices with $\left(M_{l}\right)_{n,k}=\begin{cases}
\sqrt{\pi_{lk}}, & n=l,\\
0, & \text{else},\end{cases}$ for $n,k\in\underline{N}$.
\item Notice that for $k\in\Z$ we can write $k=a_{0}+Nl$, where $a_{0}\in\underline{N}$
and some $l\in\Z$, i.e. $k$ is in the $N$-adic expansion. Then
we obtain \begin{eqnarray*}
UT^{k}\varphi_{j} & = & \begin{cases}
0, & \text{if }A_{a_{0}j}=0,\\
\left(p_{a_{0}}\cdot\pi_{a_{0}j}\right)^{-1/2}T^{l}\mathbbm{1}_{[a_{0}j]}, & \text{else}.\end{cases}\end{eqnarray*}

\item Notice that in $\left\{ U^{n}T^{k}\varphi_{i}:\, n\in\N,\, k\in\Z,\, i\in\underline{N}\right\} $
some functions are constantly zero. These functions are precisely
those where for $k\in\Z$ written in the $N$-adic expansion, $k=\sum_{j=0}^{n-1}k_{n-1-j}N^{i}+lN^{n}$,
$k_{j}\in\underline{N}$, $l\in\Z$, either $A_{k_{j}k_{j+1}}=0$
for some $j\in\{0,\dots,n-2\}$ or $A_{k_{n-1}i}=0$.
\end{enumerate}
\end{rem}

\subsubsection{Mother wavelets for Markov measures}

The construction of the mother wavelets simplifies in this setting
because we only have to consider mother wavelets for one scale and
obtain the other by iterative application of the operators $U$ and
$T$ by Lemma \ref{lem:Wn}. The mother wavelets are constructed via
$N$ matrices as given in Lemma \ref{def:mother wavelets markov-1}
and so the mother wavelets are defined for $k\in\underline{N}$ and
$l\in\underline{q^{k}}\backslash\{0\}$, by\[
\psi^{k,l}=UT^{k}\sum_{j\in\underline{N}}A_{kj}c_{j}^{k,l}\varphi_{j}\]
 for coefficients $c_{j}^{k,l}\in\mathbb{C}$ as in Lemma \ref{def:mother wavelets markov-1}.
\begin{rem}
$\ $
\begin{enumerate}
\item The number of mother wavelets we obtain is $\sum_{k\in\underline{N}}q^{k}\leq N^{2}$.
In the case of $N^{2}$ mother wavelets we are back in the case of
fractals given by an IFS.
\item Notice that $\sum_{l=1}^{q^{k}-1}A_{ki}A_{kj}c_{i}^{k,l}\overline{c}_{j}^{k,l}+\sqrt{\pi_{ki}}\sqrt{\pi_{kj}}=\delta_{i,j}$.
\item Alternatively we can define the mother wavelets as the elements of
the vector \[
\left(\psi^{k,l}\right)_{l\in\{1,\dots,q^{k}-1\}}^{t}=\left(\left(A_{kj}c_{j}^{k,l}\right)_{l\in\underline{q^{k}}\backslash\{0\},j\in\underline{N}}\right)\left(UT^{k}\varphi_{j}\right)_{j\in\underline{N}}^{t}.\]

\item Here we can see that we only need mother wavelets for $W_{0}$ since
\[
\sum_{j\in\underline{N}}A_{kj}c_{j}^{k,i}\left(\nu_{\Z}([\omega j])\right)^{1/2}=\sqrt{p_{\omega_{0}}\prod_{i=1}^{n-2}\pi_{i(i+1)}}\sum_{j\in\underline{N}}A_{kj}c_{j}^{k,i}\sqrt{\pi_{\omega_{n-1}j}}=0,\]
which was the crucial condition in the case of the last section. 
\end{enumerate}
\end{rem}
\begin{cor}
\begin{align*}
 & \left\{ U^{n}T^{m}\psi^{k,l}:\, n\in\N_{0},\, m\in D_{n,k},\, k\in\underline{N},\, l\in\underline{q^{k}}\backslash\{0\}\right\} \\
\cup & \left\{ \left(U^{*}\right)^{n}T^{m}\psi^{k,l}:\, n\in\N,\, m\in\Z,\, k\in\underline{N},\, l\in\underline{q^{k}}\backslash\{0\}\right\} \\
\cup & \left\{ \left(U^{*}\right)^{n}T^{k}\varphi_{j}:\, n\in\N,\, k\in N\Z+l,\, j,l\in\underline{N},\, A_{jl}=0\right\} \end{align*}
gives an ONB for $L^{2}(\nu_{\Z})$, where \begin{align*}
D_{n,k}= & \Bigg\{ m\in\Z:\, m=\sum_{i=0}^{n-1}\omega_{n-1-i}N^{i}+N^{n}l,\omega_{i}\in\underline{N},\,(\omega_{0},\dots,\omega_{n-1})\in\Sigma_{A}^{n}\\
 & \ \ \text{ and }A_{\omega_{0}k}=1,\, l\in\Z\Bigg\}.\end{align*}
\end{cor}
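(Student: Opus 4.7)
My plan is to decompose $L^{2}(\nu_{\Z})$ using the MRA of Theorem~\ref{thm:MRA-1} as $\cl\bigoplus_{n\in\Z}W_{n}$ with $W_{n}:=V_{n+1}\ominus V_{n}$, exhibit an orthonormal basis of each $W_{n}$, and show that the union is exactly the stated family. By Lemma~\ref{lem:Wn}, $W_{n}=U^{n}W_{0}$ for $n\geq0$ and $W_{-n}=(U^{*})^{n-1}W_{-1}$ for $n\geq1$; since $UU^{*}=\id$ by Proposition~\ref{pro:eigenschaften UT-1}~(\ref{enu:,Prop U (4)}), the operator $U^{*}$ is an isometry on $L^{2}(\nu_{\Z})$, so both transports preserve orthonormality. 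The task therefore reduces to describing an ONB of $W_{0}$ and of $W_{-1}$.

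For the non-negative scales I would first verify that $\{T^{m}\psi^{k,l}:m\in\Z,\,k\in\underline{N},\,l\in\underline{q^{k}}\backslash\{0\}\}$ is an ONB of $W_{0}$; this is immediate from the unitarity of the matrices $M_{k}$ in Lemma~\ref{def:mother wavelets markov-1}, whose first row encodes the $V_{0}$-component of a cell and whose remaining rows encode the mother wavelets. Iterating the commutation $TU=UT^{N}$ from Proposition~\ref{pro:eigenschaften UT-1}~(\ref{enu:Prop U (1)}), a direct computation gives
\[
U^{n}T^{m}\psi^{k,l}=U^{n+1}\sum_{j\in\underline{N}}A_{kj}c_{j}^{k,l}T^{Nm+k}\varphi_{j},
\]
and the vanishing criterion~(\ref{eq:funkUnTkphij}) identifies the surviving indices as precisely those $m$ for which $Nm+k$ admits an admissible $\Sigma_{A}^{n+1}$-decomposition ending in $k$, i.e.\ $m\in D_{n,k}$. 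This produces the first set in the claim.

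The heart of the proof is the decomposition of $W_{-1}$. Partition $V_{0}$ into the orthogonal finite-dimensional slices $V_{0}^{(l,r)}:=\spn\{T^{Nr+l}\varphi_{j}:j\in\underline{N}\}$, $l\in\underline{N}$, $r\in\Z$. Applying $U^{*}$ to Proposition~\ref{pro:eigenschaften UT-1}~(\ref{enu:,Prop U (2)}) and using~(\ref{enu:,Prop U (4)}) yields $U^{*}\varphi_{i}=\sum_{j}\sqrt{\pi_{ij}}T^{i}\varphi_{j}$, so within $V_{0}^{(l,r)}$ the intersection with $V_{-1}$ is the one-dimensional line spanned by the vector $(\sqrt{\pi_{lj}})_{j}$. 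I then identify two orthogonal subfamilies of $W_{-1}\cap V_{0}^{(l,r)}$: the $q^{l}-1$ vectors $U^{*}T^{r}\psi^{l,s}$ for $s\in\underline{q^{l}}\backslash\{0\}$, which lie in $W_{-1}$ because $U^{*}$ is isometric and $U^{*}W_{0}\perp U^{*}V_{0}=V_{-1}$; and the $N-q^{l}$ characteristic functions $\{T^{Nr+l}\varphi_{j}:A_{lj}=0\}$, which lie in $W_{-1}$ because $UT^{Nr+l}\varphi_{j}=0$ forces $\langle T^{Nr+l}\varphi_{j},U^{*}g\rangle=\langle UT^{Nr+l}\varphi_{j},g\rangle=0$ for every $g$. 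Mutual orthogonality between these two subfamilies is a direct slice computation (the coefficient of $T^{Nr+l}\varphi_{j}$ in $U^{*}T^{r}\psi^{l,s}$ is $A_{lj}c_{j}^{l,s}=0$ whenever $A_{lj}=0$). The tight dimension count $1+(q^{l}-1)+(N-q^{l})=N=\dim V_{0}^{(l,r)}$ then forces the two subfamilies together to constitute an ONB of $W_{-1}\cap V_{0}^{(l,r)}$, and summing over slices gives an ONB of $W_{-1}$. Transporting this basis by $(U^{*})^{n-1}$ produces the second and third sets of the claim.

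The main obstacle is exactly this slice-wise decomposition of $W_{-1}$: because $U$ is not unitary when $A$ has zero entries, $U^{*}W_{0}$ fails to exhaust $W_{-1}$, and the ``missing'' directions have to be pinpointed as precisely the characteristic functions attached to forbidden Markov transitions. Once this identification is in hand, orthogonality across scales is automatic from $W_{n}\perp W_{n'}$ for $n\neq n'$, and completeness follows from $\cl\bigcup V_{n}=L^{2}(\nu_{\Z})$ together with $\bigcap V_{n}=\{0\}$, both supplied by Theorem~\ref{thm:MRA-1}; the remaining verification of orthonormality within each of the three sets is a routine application of Propositions~\ref{pro:eigenschaften UT-1}~(\ref{enu:,Prop U (3)})--(\ref{enu:,Prop U (4)}) and Lemma~\ref{lem:Wn}.
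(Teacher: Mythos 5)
Your proof is correct and follows essentially the same route the paper takes (the paper states this corollary without a written proof, relying on Lemma \ref{lem:Wn}, the matrices of Lemma \ref{def:mother wavelets markov-1}, and the subsequent remark that a basis of $W_{-1}$ is obtained from $U^{*}\left(W_{0}\right)$ by adjoining the functions $T^{k}\varphi_{j}$ with $UT^{k}\varphi_{j}=0$): you decompose $L^{2}(\nu_{\Z})$ into the spaces $W_{n}$, transport bases by $U^{n}$ and $\left(U^{*}\right)^{n-1}$, and complete $U^{*}W_{0}$ inside $W_{-1}$ by exactly those characteristic functions, with the slice-wise dimension count making this completion explicit. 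Your derivation in fact produces the correct index conventions --- the admissibility constraint in $D_{n,k}$ concerns the last letter $\omega_{n-1}$ of $\omega$, and the third family must begin at $\left(U^{*}\right)^{0}$ --- which the printed statement renders with minor index/off-by-one typos.
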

\begin{rem}
$\ $
\begin{enumerate}
\item Because of $UW_{-1}=W_{0}$ we only have to add those functions $T^{k}\varphi_{j}$,
$k\in\Z$, $j\in\underline{N}$, with $UT^{k}\varphi_{j}=0$ to the
basis of $U^{*}\left(W_{0}\right)$ to obtain a basis of $W_{-1}$. 
\item Notice that
\end{enumerate}
\begin{eqnarray*}
\psi^{k,l} & = & UT^{k}\sum_{i\in\underline{N}}A_{ki}c_{i}^{k,l}\varphi_{i}\\
 & = & \sum_{i\in\underline{N}}A_{ki}c_{i}^{k,l}\cdot\left(p_{k}\cdot\pi_{ki}\right)^{-1/2}\cdot\mathbbm{1}_{[ki]}.\end{eqnarray*}

\end{rem}

\subsection{\label{sub:Examples}Examples}

In the construction of \cite{MaPa09} only Cantor sets with incidence
matrix are considered, i.e. the IFS has the form $\left(\tau_{i}(x)=\frac{x+i}{N}\right)_{i\in\underline{N}}$,
and there exists a incidence matrix $A$. The limit set has then the
Hausdorff dimension $\delta=\dim_{H}(X)=\frac{\log r(A)}{\log N}$,
where $r(A)$ is the spectral radius of $A$. So we consider the $\delta$-dimensional
Hausdorff measure $\mu$ restricted to the\textbf{ }by\textbf{ $\Z$
}translated set $X$. It follows that $p_{j}=\mu([j])$ and $\pi_{ij}=\frac{N^{-2\delta}p_{j}}{p_{i}}$.
Consequently, in this case we can rewrite our conditions for obtaining
the coefficients of the mother wavelets in a simpler way. More precisely,
for $k\in\underline{N}$ instead of \[
\sum_{j\in\underline{N}}A_{kj}c_{j}^{k,i}\sqrt{\pi_{kj}}=0\]
 we obtain the condition \[
\sum_{j\in\underline{N}}A_{kj}c_{j}^{k,i}\sqrt{p_{j}}=0.\]

Although the basis in \cite{MaPa09} is only given in terms of the
representation of a Cuntz-Krieger algebra we can now give a scaling
operator $U$ in the sense of (\ref{eq:def U}) for this case. More
precisely, we obtain\[
Uf(x)=N^{\delta}\sum_{k\in\Z}\sum_{j\in\underline{N}}\mathbbm{1}_{[j]}(x-k)\cdot f(\tau_{j}^{-1}(x-k)+j+Nk).\]

\textbf{\textcolor{black}{Proof of Example \ref{exa:-Transformation}:
}}We clearly have that the $\beta$-transformation belongs to the
class of Markov measures. Consequently, we have that $\left(\mu,U,T\right)$
allows a MRA. We can construct the mother wavelets along the lines
of Section \ref{sub:MRA-for-Markov}. Since we have that in this case
$d_{0}=2$ and $d_{1}=1$ we only have to construct coefficients for
$\varphi_{0}$ to obtain the mother wavelets. These coefficients are
given in the following matrix which is unitary:\[
\left(\begin{array}{cc}
\sqrt{\beta-1} & \sqrt{2-\beta}\\
\sqrt{2-\beta} & -\sqrt{\beta-1}\end{array}\right).\]
Thus, the mother wavelet is $\psi=U\left(\sqrt{2-\beta}\varphi_{0}-\sqrt{\beta-1}\varphi_{1}\right)$.
To obtain the basis we further notice that $UT\varphi_{1}=0$ and
so we have to keep $T^{k}\varphi_{1}$, $k\in2\Z+1$ in the basis.

\section{\label{sec:Operator-algebra}Operator algebra}

In the case of one father wavelet we obtain a so-called low-pass filter
function and high-pass filter functions, in terms of which the mother
wavelets are given. Via these filter functions we obtain a representation
of the Cuntz algebra $\mathcal{O}_{N}$, where $N$ is the number
of filter functions. In the case of multiwavelets we can obtain weaker
relations. Here we restrict to the case of MIM with underlying Markov
measure as treated in Section \ref{sub:MRA-for-Markov}. These results
are in correspondence to results in \cite{BFMP10}.

The relations for the father and the mother wavelets can be written
in the following way:

For the following we introduce for $z\in\mathbb{T}:=\left\{ \omega\in\C:|\omega|=1\right\} $
the low-pass filter \[
H(z)=\left(\sqrt{\pi_{kl}}z^{k}\right)_{l,k\in\underline{N}}\]
and for each $k\in\underline{N}$ and $z\in\mathbb{T}$ the high-pass
filter \[
G_{k}(z)=\left(A_{kl}c_{l}^{k,j}z^{k}\right)_{j\in\underline{q^{k}}\backslash\{0\},l\in\underline{N}}.\]

With these definitions we obtain the following immediate lemma.
\begin{lem}
Let $\phi=\left(\varphi_{j}\right)_{j\in\underline{N}}^{t}$, then
$\phi=UH(T)\phi$ and let $\psi_{k}=\left(\psi^{k,j}\right)_{j\in\underline{q^{k}}\backslash\{0\}}^{t}$
for $k\in\underline{N}$, then $\psi_{k}=UG_{k}(T)\phi$, where the
operators $U$ and $T$ are applied to evey entry in the vector. \end{lem}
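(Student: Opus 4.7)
The plan is to verify both identities componentwise by unfolding the matrix notation and then matching with formulas already proved in Proposition \ref{pro:eigenschaften UT-1} and the explicit definition of the mother wavelets in Section \ref{sub:MRA-for-Markov}. Since $H(T)$ and $G_k(T)$ are matrices of operators acting entrywise on the vectors $\phi$ and $\psi_k$, each identity reduces to checking the corresponding component equation, and both of these turn out to be essentially restatements of formulas already in the paper.

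For the first identity, I would fix a row index $i\in\underline{N}$ and compute
\[
(UH(T)\phi)_i \;=\; U\sum_{j\in\underline{N}} H(T)_{ij}\,\varphi_j \;=\; \sum_{j\in\underline{N}} \sqrt{\pi_{ij}}\,U T^{i}\varphi_j,
\]
where I interpret $H(T)_{ij}=\sqrt{\pi_{ij}}\,T^{i}$ consistently with the definition $H(z)=\left(\sqrt{\pi_{kl}}z^{k}\right)_{l,k\in\underline{N}}$. This is exactly the right-hand side of Proposition \ref{pro:eigenschaften UT-1}\,(\ref{enu:,Prop U (2)}), which states $\varphi_i=U\sum_{j\in\underline{N}}\sqrt{\pi_{ij}}\,T^{i}\varphi_j$. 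Thus $(UH(T)\phi)_i=\varphi_i$ for every $i\in\underline{N}$, giving $\phi=UH(T)\phi$.

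For the second identity, I fix $k\in\underline{N}$ and a row index $j\in\underline{q^{k}}\setminus\{0\}$ and compute
\[
(UG_{k}(T)\phi)_j \;=\; U\sum_{l\in\underline{N}} G_k(T)_{jl}\,\varphi_l \;=\; U\sum_{l\in\underline{N}} A_{kl}\,c_{l}^{k,j}\,T^{k}\varphi_l \;=\; UT^{k}\sum_{l\in\underline{N}} A_{kl}\,c_{l}^{k,j}\,\varphi_l,
\]
which by the definition $\psi^{k,j}=UT^{k}\sum_{l\in\underline{N}} A_{kl}\,c_{l}^{k,j}\,\varphi_l$ given in Section \ref{sub:MRA-for-Markov} equals $\psi^{k,j}$. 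Hence $\psi_k=UG_{k}(T)\phi$.

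The only real care needed is in reading off the matrix convention: the index placement in $H(z)=(\sqrt{\pi_{kl}}z^{k})_{l,k}$ and $G_{k}(z)=(A_{kl}c_{l}^{k,j}z^{k})_{j,l}$ must be aligned so that applying the matrix-operator to the column vector $\phi$ reproduces, row by row, exactly the scalar identities from Proposition \ref{pro:eigenschaften UT-1}\,(\ref{enu:,Prop U (2)}) and the definition of $\psi^{k,j}$. Once this bookkeeping is fixed there is no further content, so I do not anticipate any substantial obstacle; the lemma is essentially a notational repackaging of previously established formulas into filter-operator form, suitable for the subsequent connection with Cuntz--Krieger type algebras.
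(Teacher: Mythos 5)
Your proof is correct and matches the paper's (implicit) reasoning exactly: the paper states this lemma as immediate from the definitions, and your componentwise unfolding reduces the first identity to Proposition \ref{pro:eigenschaften UT-1}\,(\ref{enu:,Prop U (2)}) and the second to the defining formula $\psi^{k,l}=UT^{k}\sum_{j\in\underline{N}}A_{kj}c_{j}^{k,l}\varphi_{j}$, which is all there is to it. You are also right that the only delicate point is the row/column convention in $H(z)=\left(\sqrt{\pi_{kl}}z^{k}\right)_{l,k\in\underline{N}}$, which must be read so that the $(i,j)$ entry is $\sqrt{\pi_{ij}}z^{i}$ for the lemma to hold.
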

\begin{rem}
It follows that for $z\in\mathbb{T}$ \[
\overline{H}(z)H^{t}(z)=\left(\sum_{j\in\underline{N}}\sqrt{\pi_{kj}\pi_{lj}}z^{l-k}\right)_{k,l\in\underline{N}}\]
and for $k\in\underline{N}$, $z\in\mathbb{T}$,

\begin{eqnarray*}
\overline{G_{k}}(z)G_{k}^{t}(z) & = & I.\end{eqnarray*}

\end{rem}
These filter functions lead us to the definitions of certain {}``isometries''. 
\begin{defn}
For $z\in\mathbb{T}$ and $f=\left(f_{0},\dots,f_{N-1}\right)$, $f_{j}\in L^{2}(\mathbb{T},\lambda)$,
define \[
S_{H}f(z)=\sqrt{N}H^{t}(z)f\left(z^{N}\right)\]
 and for $k\in\underline{N}$, $z\in\mathbb{T},$\[
S_{G_{k}}f(z)=G_{k}^{t}(z)f\left(z^{N}\right).\]

\end{defn}
For these {}``isometries'' we have the following properties. 
\begin{prop}
The following relations hold:
\begin{enumerate}
\item $S_{H}^{*}S_{H}=I$, \label{enu:S1}
\item $S_{G_{k}}^{*}S_{G_{k}}=I$, $k\in\underline{N}$,\label{enu:S2}
\item $S_{H}^{*}S_{G_{k}}=0$ and $S_{G_{k}}^{*}S_{H}=0$, $k\in\underline{N}$,\label{enu:S3}
\item $S_{G_{i}}^{*}S_{G_{j}}=0$, $i,j\in\underline{N}$, $i\neq j$ .\label{enu:S4}
\end{enumerate}
\end{prop}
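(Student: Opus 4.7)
The plan is to verify each of the four relations by direct computation in $L^{2}(\mathbb{T},\lambda)^N$, using as main tool the orthogonality
\[
\int_{\mathbb{T}} z^{m} F(z^{N})\,d\lambda(z) = \delta_{N\mid m}\int_{\mathbb{T}} w^{m/N} F(w)\,d\lambda(w),
\]
which follows from the substitution $w=z^{N}$ (the $N$-fold cover of $\mathbb{T}$) together with $\sum_{j=0}^{N-1}\rho^{jm}=N\delta_{N\mid m}$ for $\rho=e^{2\pi i/N}$. Every exponent of $z$ appearing in $H(z)$ and each $G_k(z)$ lies in $\{0,1,\dots,N-1\}$, so the differences of these exponents modulo $N$ will decide which cross-terms survive integration.

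First I would prove (\ref{enu:S1}) by expanding $\|S_H f\|^{2}$: multiplying out and applying the orthogonality above collapses the double sum over $k,k'$ to $k=k'$ (since $|k-k'|<N$), and what remains is $N\sum_k \|f_k\|^{2}\sum_l \pi_{kl}$, which reduces to the claimed expression by stochasticity $\sum_l \pi_{kl}=1$ of $\Pi$. The argument for (\ref{enu:S2}) is analogous but simpler: the $z$-dependent factor contributes $|z^k|^{2}=1$, and after the same orthogonality step the residual coefficient sum is $\sum_l A_{kl} c_l^{k,j}\overline{c_l^{k,j'}}=\delta_{j,j'}$, which holds because the vectors $(c_l^{k,j})_{l:A_{kl}=1}$ for $j\in\underline{q^k}\setminus\{0\}$ are orthonormal by the Gram-Schmidt construction in Lemma \ref{def:mother wavelets markov-1}.

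For (\ref{enu:S3}) I would compute the inner product $\langle S_{G_k}f,S_H g\rangle$ directly. The cross-terms produce factors $\int z^{k-m}F(z^N)\,d\lambda(z)$ with $m\in\underline{N}$; orthogonality forces $m=k$, and the surviving coefficient $\sum_l A_{kl} c_l^{k,j}\sqrt{\pi_{kl}}$ is precisely the inner product between a high-pass row and the low-pass row of the matrix $M_k$. This vanishes for every $j\geq 1$ by the unitarity of $M_k$, and passing to the adjoint yields $S_{G_k}^{*}S_H=0$ as well. Relation (\ref{enu:S4}) then follows purely from frequency separation: $\langle S_{G_j}f,S_{G_i}g\rangle$ contains the factor $\int z^{j-i}F(z^N)\,d\lambda(z)$, which vanishes whenever $0<|j-i|<N$, i.e.\ for all $i\neq j$, with no further input on the coefficients required.

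The only real obstacle is notational bookkeeping: $H$ and the $G_k$ have different dimensions and indexing conventions, and the $\sqrt{N}$ normalisation attached to $S_H$ (absent from $S_{G_k}$) must be tracked carefully through the changes of variable to land on the stated identities. No mathematical input beyond the stochasticity of $\Pi$ and the unitarity of each block $M_k$ is needed.
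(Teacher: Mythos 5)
Your proposal is correct and follows essentially the same route as the paper: the integral identity $\int_{\mathbb{T}} z^{m}F(z^{N})\,d\lambda=\delta_{N\mid m}\int_{\mathbb{T}} w^{m/N}F(w)\,d\lambda$ you rely on is exactly the roots-of-unity summation $\sum_{\omega^{N}=z}\omega^{m}=N\delta_{N\mid m}z^{m/N}$ underlying the paper's pointwise computation of $S^{*}S$ via the explicit adjoint formulas, and the structural inputs coincide (stochasticity of $\Pi$ for (1), orthonormality of the Gram--Schmidt rows for (2), unitarity of the block $M_{k}$ for (3), pure frequency separation for (4)). If anything, your treatment of (3) is slightly more careful than the paper's, which attributes $S_{H}^{*}S_{G_{k}}=0$ entirely to the roots-of-unity cancellation, whereas the surviving diagonal term $m=k$ genuinely needs the orthogonality $\sum_{l}A_{kl}c_{l}^{k,j}\sqrt{\pi_{kl}}=0$ that you invoke.
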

\begin{rem}
Realize that for $z\in\mathbb{T}$, $f=\left(f_{0},\dots,f_{N-1}\right)$,
$f_{j}\in L^{2}(\mathbb{T},\lambda)$, \[
S_{H}^{*}f(z)=\frac{1}{\sqrt{N}}\sum_{\omega^{N}=z}\overline{H}(\omega)f(\omega)\]
 and for $k\in\underline{N}$, $z\in\mathbb{T}$, $f=\left(f_{0},\dots,f_{N-1}\right)$,
$f_{j}\in L^{2}(\mathbb{T},\lambda)$, \[
S_{G_{k}}^{*}f(z)=\frac{1}{N}\sum_{\omega^{N}=z}\overline{G_{k}}(\omega)f(\omega).\]
\end{rem}
\begin{proof}
ad (\ref{enu:S1}): Let $z\in\mathbb{T}$, $f=\left(f_{0},\dots,f_{N-1}\right)$,
$f_{j}\in L^{2}(\mathbb{T})$, then\begin{eqnarray*}
S_{H}^{*}S_{H}f(z) & = & \sum_{\omega^{N}=z}\overline{H}(\omega)H^{t}(\omega)f(\omega^{N})\\
 & = & \sum_{\omega^{N}=z}\overline{H}(\omega)H^{t}(\omega)f(z)=f(z)\end{eqnarray*}

ad (\ref{enu:S2}): Let $k\in\underline{N}$, $z\in\mathbb{T}$, $f=\left(f_{0},\dots,f_{N-1}\right)$,
$f_{j}\in L^{2}(\mathbb{T})$, then \begin{eqnarray*}
S_{G_{K}}^{*}S_{G_{k}}f(z) & = & \frac{1}{N}\sum_{\omega^{N}=z}\overline{G_{k}}(\omega)G_{k}^{t}(\omega)f(z)=f(z)\end{eqnarray*}

ad (\ref{enu:S3}): Let $k\in\underline{N}$, $z\in\mathbb{T}$, $f=\left(f_{0},\dots,f_{N-1}\right)$,
$f_{j}\in L^{2}(\mathbb{T})$, then \begin{eqnarray*}
S_{H}^{*}S_{G_{k}}f(z) & = & \frac{1}{N}\sum_{\omega^{N}=z}\overline{H}(\omega)G_{k}^{t}(\omega)f(z)=0,\end{eqnarray*}
since $\sum_{\omega^{N}=z}\overline{H}(\omega)G_{k}^{t}(\omega)=0$
by summing up the roots of unity.

For $S_{G_{k}}^{*}S_{H}$ we use that $\overline{G_{k}}(\omega)H^{t}(\omega)=0$
by the choice of the coefficients $c_{j}^{k,l}$. 

ad (\ref{enu:S4}): Let $i,j\in\underline{N}$, $i\neq j$, $z\in\mathbb{T}$,
$f=\left(f_{0},\dots,f_{N-1}\right)$, $f_{j}\in L^{2}(\mathbb{T})$,
then\begin{align*}
S_{G_{i}}^{*}S_{G_{j}}f(z) & =\frac{1}{N}\sum_{\omega^{N}=z}\overline{G_{i}}(\omega)G_{j}^{t}(\omega)f(z)=0,\end{align*}
by summing up the roots of unity. 
\end{proof}
Here we have seen that in contrast to the filter functions for a usual
MRA with one father wavelet and a unitary scaling operator $U$, we
do not obtain a representation of a Cuntz algebra since we do not
neccessarily have that $S_{H}S_{H}^{*}+\sum_{k\in\underline{N}\backslash\{0\}}S_{G_{k}}S_{G_{K}}^{*}=I$.
So we only obtain weaker relations between these filter functions. 
\providecommand{\bysame}{\leavevmode\hbox to3em{\hrulefill}\thinspace}
\providecommand{\MR}{\relax\ifhmode\unskip\space\fi MR }
\providecommand{\MRhref}[2]{%
  \href{http://www.ams.org/mathscinet-getitem?mr=#1}{#2}
}
\providecommand{\href}[2]{#2}


\begin{thebibliography}{BFMP10}

\bibitem[Alp93]{Al93}
Bradley~K. Alpert, \emph{A class of bases in {$L^2$} for the sparse
  representation of integral operators}, SIAM J. Math. Anal. \textbf{24}
  (1993), no.~1, 246--262. \MR{1199538 (93k:65104)}

\bibitem[BFMP10]{BFMP10}
Lawrence~W. Baggett, Veronika Furst, Kathy~D. Merrill, and Judith~A. Packer,
  \emph{Classification of generalized multiresolution analyses}, J. Funct.
  Anal. \textbf{258} (2010), no.~12, 4210--4228. \MR{2609543}

\bibitem[BK10]{BoKe10}
Jana Bohnstengel and Marc Kesseb{\"o}hmer, \emph{Wavelets for iterated function
  systems}, J. Funct. Anal. \textbf{259} (2010), no.~3, 583--601. \MR{2644098}

\bibitem[Bod07]{Bod06}
Mats Bodin, \emph{Wavelets and {B}esov spaces on {M}auldin-{W}illiams
  fractals}, Real Anal. Exchange \textbf{32} (2006/07), no.~1, 119--143.
  \MR{2329226 (2008h:42063)}

\bibitem[CK03]{CK03}
Mark Crovella and Eric Kolaczyk, \emph{Graph wavelets for spatial traffic
  analysis}, Proceedings of IEEE Infocom, April 2003.

\bibitem[Dau92]{Dau92}
Ingrid Daubechies, \emph{Ten lectures on wavelets}, CBMS-NSF Regional
  Conference Series in Applied Mathematics, vol.~61, Society for Industrial and
  Applied Mathematics (SIAM), Philadelphia, PA, 1992. \MR{1162107 (93e:42045)}

\bibitem[DJ06]{DJ03}
Dorin~E. Dutkay and Palle E.~T. Jorgensen, \emph{Wavelets on fractals}, Rev.
  Mat. Iberoam. \textbf{22} (2006), no.~1, 131--180. \MR{2268116 (2008h:42071)}

\bibitem[GP96]{GP96}
Jean-Pierre Gazeau and Jiri Patera, \emph{Tau-wavelets of {H}aar}, J. Phys. A
  \textbf{29} (1996), no.~15, 4549--4559. \MR{1413218 (97f:42054)}

\bibitem[Jon98]{Jo98}
Alf Jonsson, \emph{Wavelets on fractals and {B}esov spaces}, Journal of Fourier
  Analysis and Applications \textbf{4} (1998), 329--340, 10.1007/BF02476031.

\bibitem[KS10]{KeSa10}
Marc {Kesseb{\"o}hmer} and Tony {Samuel}, \emph{{Spectral metric spaces for
  Gibbs measures}}, ArXiv:1012.5152 (2010).

\bibitem[KSS07]{KeStaStr07}
Marc Kesseb{\"o}hmer, Manuel Stadlbauer, and Bernd Stratmann, \emph{Lyapunov
  spectra for {KMS} states on {C}untz - {K}rieger algebras}, Mathematische
  Zeitschrift \textbf{256} (2007), 871--893, 10.1007/s00209-007-0110-y.

\bibitem[MP09]{MaPa09}
Matilde Marcolli and Anna Paolucci, \emph{{C}untz - {K}rieger algebras and
  wavelets on fractals}, Complex Analysis and Operator Theory (2009), 1--41,
  10.1007/s11785-009-0044-y.

\bibitem[MU03]{MU03}
Daniel Mauldin and Mariusz Urba{\'n}ski, \emph{Graph directed {M}arkov
  systems}, Cambridge Tracts in Mathematics, vol. 148, Cambridge University
  Press, Cambridge, 2003, Geometry and dynamics of limit sets. \MR{2003772
  (2006e:37036)}

\bibitem[Par60]{Pa60}
William Parry, \emph{On the {$\beta $}-expansions of real numbers}, Acta Math.
  Acad. Sci. Hungar. \textbf{11} (1960), 401--416. \MR{0142719 (26 \#288)}

\bibitem[R{\'e}n57]{Re57}
Alfr{\'e}d R{\'e}nyi, \emph{Representations for real numbers and their ergodic
  properties}, Acta Math. Acad. Sci. Hungar \textbf{8} (1957), 477--493.
  \MR{0097374 (20 \#3843)}

\end{thebibliography}
\end{document}